\newtheorem{theorem}{Theorem}
\newtheorem{lemma}[theorem]{Lemma}
\pgfplotsset{compat=1.12}
\newcommand{\sn}{\mbox{\tiny SN}}
\newcommand{\tc}{\mbox{\tiny TC}}
\DeclareSymbolFont{rsfs}{U}{rsfs}{m}{n}
\DeclareSymbolFontAlphabet{\mathscrsfs}{rsfs}
\newtheorem{prop}{Proposition}
\definecolor{mg}{RGB}{255,0,255}
\theoremstyle{remark}
\newtheorem{remark}{Remark}
\begin{document}

\title{Analytical detection of stationary and dynamic patterns in a  prey-predator model with reproductive Allee effect in prey growth} \vspace{-1em}
\author{Subrata Dey, S Ghorai,  Malay Banerjee\thanks{Corresponding author} }
\maketitle
\vspace{-2em}
{\center Department of Mathematics and Statistics, Indian Institute of Technology Kanpur, Kanpur 208016, India \\}

{\center
E-mails:  subratad@iitk.ac.in, sghorai@iitk.ac.in, malayb@iitk.ac.in\\[2mm]  }

\begin{abstract}
Allee effect in population dynamics has a major impact  in suppressing the paradox of enrichment through global bifurcation,  and it can generate highly complex dynamics. The influence of  the reproductive Allee effect, incorporated in the prey's growth rate  of a prey-predator model with Beddington-DeAngelis functional response, is investigated here. Preliminary local and global bifurcations  are identified 
of the temporal model.  Existence and non-existence  of heterogeneous  steady-state solutions of the spatio-temporal system are established for suitable ranges of parameter values. The spatio-temporal model satisfies Turing instability conditions, but numerical investigation reveals that the heterogeneous patterns corresponding to unstable  Turing eigen modes acts as a transitory pattern. Inclusion of the  reproductive Allee effect in the prey population has a destabilising effect on the coexistence equilibrium. For a range of parameter values, various branches of stationary solutions including mode-dependent Turing  solutions and localized pattern solutions  are identified using numerical bifurcation technique. The model is also capable to produce some complex dynamic patterns such as travelling wave, moving pulse solution, and spatio-temporal chaos for certain range of parameters and diffusivity along with appropriate choice of  initial conditions Judicious choices of parametrization for the Beddington-DeAngelis functional response help us to infer about the resulting patterns for similar prey-predator models with Holling type-II functional response and ratio-dependent functional response.
    
\end{abstract}

\section{Introduction}\label{sec1}

Understanding self-organized spatio-temporal patterns and mechanisms of  spatial dispersal in interacting populations have always been  important areas of mathematical biology and theoretical ecology. Since the pioneering work of Lotka \cite{LotkaUNDAMPEDOD}  and Volterra \cite{Volterra}, a wide variety of mathematical models  have been developed to investigate various complex ecological phenomena, e.g., Allee effect \cite{stephens1999allee}, group defense \cite{venturino2013spatiotemporal}, parental care \cite{wang2006prey}, hunting cooperation \cite{alves2017hunting}, intra-guild predation \cite{kang2013dynamics},  etc. 
In addition, persistence, stability of   steady-state, local and global bifurcation, etc., of these models play significant roles in dynamical system theory. The study of such phenomena under spatial distribution makes it more interesting and realistic in spatial ecology. The spatio-temporal distribution of species has been extensively studied  to explain pattern formation observed in fish skin \cite{fish},  prey-predator interactions \cite{prey}, terrestrial vegetation \cite{vegetation}, mussel bed \cite{cangelosi2015nonlinear}, etc.

The classical explanation of spatial pattern formation comes from Turing's pioneering work in  reaction-diffusion (RD) system for chemical morphogenesis \cite{Turing}. The framework of the diffusion-driven instability in Turing's work needs two basic criteria: an activator-inhibitor interaction structure of at least two interacting species, and an order of magnitude difference in their dispersal coefficient. Segel and Jackson \cite{Segal} first used Turing's model framework to explain the spatial pattern formation for ecological systems. Turing instability produces various stationary patterns such as spots, stripes, and a mixture of both \cite{cross1993pattern,turing1}. For one-dimensional habitats, Turing patterns lead to stationary spatially periodic patterns \cite{hillen1996turing,dey2022analytical}. As parameters are varied towards certain thresholds, the Turing pattern solution becomes a localized stationary  pattern solution  due to another instability, the so-called  Belyakov-Devaney (BD) transition \cite{belyakov1997abundance,champneys1998homoclinic}.  The bounds and the region of existence of the localized pattern solution, and the homoclinic snaking mechanism \cite{woods1999heteroclinic,burke2007snakes} of the patterns   can be described by semi-strong asymptotic analysis \cite{al2022spikes}. These spatial instabilities depend on the reaction kinetics of the system.  For example,  Rosenzweig-MacArthur(RM) prey-predator system, one of the most popular models in ecology, does not follow Turing's activator-inhibitor structure and it does not produce any stationary patterns \cite{petrovskii1999minimal,banerjee2017spatio}.  Some  additional nonlinear effects such as predator-dependent functional response, generalist type predator, nonlinear death rate of the predator, Allee effect, etc., \cite{turing1,mimura1978diffusive,dey2022analytical,wang2013dynamics} are required to produce a stationary Turing pattern.  

The Allee effect, introduced by renowned ecologist W. C. Allee \cite{Alleebook},  has important consequences on population dynamics and their persistence. It refers to the  correlation between the population density and per capita growth rate of the population at low densities \cite{stephens1999allee}. It plays an important role in ecological conservation and wildlife management since it is closely related to population extinction.  Generally speaking, there are two kinds of  Allee effects \cite{courchamp2008allee}: component Allee effect and demographic Allee effect. Component Allee  effect focuses on components of individual fitness,  whereas demographic Allee effect is at the level of the overall mean individual fitness of the whole population. Allee effect can be induced by a variety of factors that include  difficulties in  social interaction,  cooperative breeding, reproductive facilitation, anti-predator behavior, mate finding, and environmental conditioning, among others \cite{courchamp2008allee,boukal2002single,gascoigne2004allee}. The demographic Allee effect is a consequence of one or more  components Allee effect. The reproductive Allee effect is due to many different mechanisms.  Due to the difficulty in finding a suitable mate during their receptive period, individuals in small or sparse populations face  the mate-finding Allee effect and have less reproductive success.  Mate-finding Allee effect is observed in many species such as  Glanville fritillary butterfly \cite{kuussaari1998allee},  sheep ticks \cite{rohlf1969effect},  polar bears \cite{molnar2008modelling}, queen conch \cite{stoner2012negative},  and many others [see \cite{courchamp2008allee,kramer2009evidence} for further examples].  Other mechanisms of the reproductive Allee effect include broadcast spawning in corals, pollen limitation in field gentian, reproductive facilitation in whiptail lizard, and sperm  limitation in blue crab, among others \cite{courchamp2008allee,kramer2009evidence}. This component or demographic Allee effect can be strong or weak.  If there is a threshold population density, called the Allee threshold,  below which the per capita growth rate of the population is negative, then it is called  the strong Allee effect.  For the weak Allee effect, there is no such Allee threshold. If the population density  rises above this threshold, it will persist, whereas  it will become extinct if it falls below \cite{berec2007multiple,courchamp2008allee}.

Let $N(T)$ be the  density of a single species population (prey) at time $T$. The single species logistic growth is described by the equation $\displaystyle{ \frac{dN}{dT}=rN\left(1-\frac{N}{K}\right)}$; $r$ and $K$ respectively denote the intrinsic growth rate and carrying capacity of the population. This description has a limitation in that the per capita growth rate is maximum at low population density. Single species growth with the Allee effect assumes that the per capita growth rate is an increasing function of the population at low population density. There exist several parametrizations of the growth of a single species population subject to the Allee effect. In terms of mathematical formulations, those can be broadly classified into two types: additive Allee effect and  multiplicative Allee effect. In the case of additive Allee effect, the logistic growth equation is modified as \cite{stephens1999allee,aguirre2009three}
$$\frac{dN}{dT}\,=\,rN\left(1-\frac{N}{K}-\frac{s}{N+\phi}\right),$$
where $s$ and $\phi$ indicate the severity of the additive Allee effect. On the other hand, some well-known mathematical formulations for the multiplicative Allee effect include the parametrization $\displaystyle{rN\left(1-\frac{N}{K}\right)\left(\frac{N}{K}-\frac{\theta}{K}\right)}$ \cite{lewis1993allee}, $\displaystyle{rN\left(1-\frac{N}{K}\right)\frac{N}{N+\theta}}$ \cite{dennis1989allee}, and $\displaystyle{rN\left(1-\frac{N}{K}\right)\left(1-\frac{N+C}{N+\theta}\right)}$  \cite{boukal2002single,courchamp2008allee}. In these parametrizations, $\theta$ is the strength of the Allee effect, and  $C$ characterizes the shape of the per capita growth rate curve.  The most widely used form of the multiple Allee effect is 
\begin{equation}
\frac{dN}{dT}=rN\left(1-\frac{N}{K}\right)\left(\frac{N}{K}-\frac{\theta}{K}\right),
\label{sAllee}
\end{equation}
where $0<\theta<K$ corresponds to strong Allee effect and $-K<\theta<0$ stands for weak Allee effect. This formulation has a drawback as the intra-specific cooperation and competition appear jointly in the formulation in a multiplicative way.
To overcome this situation,  Petrovskii et al. \cite{petrovskii2008consequences} proposed a different parametrization to model the single species population growth subject to Allee effect as follows:
\begin{equation}
\frac{dN}{dT}=N\big(B(N)-c-I(N)\big),
\label{Allee1}
\end{equation}
where $B(N)$ and $I(N)$ represent  population multiplication due to reproduction and  an additional density-dependent mortality rate due to  intra-species competition respectively and $c$ denotes the intrinsic mortality rate. Taking the parametrization of  sexual reproduction of the population and density-dependent mortality rate in the form $B(N)=abN$ and $I(N)=aN^2$ (following the approach in \cite{jankovic2014time, mukherjee2021bifurcation}), we can write the single species growth equation as
\begin{equation}
\frac{dN}{dT}=aN^2(b-N)-cN,
\label{Allee2}
\end{equation}  
where the positive constants $a$ and $b$ can be interpreted as intrinsic growth rate and threshold for positive growth rate respectively. This description can be considered as the growth equation of a two-sex population model \cite{terry2015predator}. To illustrate this claim, let $N_m$ and $N_f$ denote the densities of male and female populations of a species ($N_m+N_f=N$), and let $R$ be the available resources. Then the reproduction rate is proportional to $N_mN_fR,$ and the available resources $R$ can be expressed as 
$R=K-S(N_m+N_f)$,
where $K$ is the carrying capacity and $S$ denotes the  per capita resource consumption rate. Assuming an equal sex ratio, i.e., $N_m=N_f=N/2$,  we find the growth rate due to reproduction to be proportional to $N^2(K-SN)/4$.  Therefore, the growth equation can be written as
\begin{equation}\label{Bz2}
    \frac{dN}{dT}= R_1 N^2(1-N/K_1)-c N,
\end{equation}
where $R_1$ and $K_1$ are positive parameters. We recover equation  \eqref{Allee2} from \eqref{Bz2} by taking  $R_1/K_1=a$ and $K_1=b$. 

The per capita growth rate, $G(N)=aN(b-N)-c$, in  \eqref{Allee2} is negative  below the threshold  $\displaystyle{N_1=\frac{b}{2}-\sqrt{\frac{b^2}{4}-\frac{c}{a}}}$ and above the threshold  $\displaystyle{N_2=\frac{b}{2}+\sqrt{\frac{b^2}{4}-\frac{c}{a}}}$, when $ab^2>4c$. Thus, equation \eqref{Allee2} can be compared with the single species population growth with strong Allee effect \eqref{sAllee} with the revised parametrization 
$r=aN_2^2$, $\theta=N_1$, and $K=N_2$. In summary, we can consider equation \eqref{Allee2} as the description of sex-structured single species population growth which implicitly includes the mate-finding Allee effect with the Allee threshold $N_1$. In this work, we consider equation \eqref{Allee2} to describe the growth rate of prey in the absence of specialist predator.

Predator-independent functional responses are widely used in prey-predator systems, which  is not always realistic in ecology \cite{skalski2001functional}. If predator density $P$ increases, then the per capita growth rate of predators may decrease. This phenomenon is called  predator interference \cite{pvribylova2015predator} that can be incorporated into system dynamics through a predator-dependent functional response.  A well-known model incorporating predator interference comes from  the RM system by replacing Holling type-II functional response with Beddington–DeAngelis functional response 
\begin{equation}\label{Beddington}
F(N,P)=\frac{m N}{p+N+q P},
\end{equation}
where parameters $m, p$ and $q$ denote maximum predation rate, self-saturation constant and predator mutual interference respectively  \cite{beddington1975mutual,alonso2002mutual}. Inclusion of mutual interference in predators induces Turing's activator-inhibitor structure in the system dynamics \cite{alonso2002mutual}. Depending on the parameter values  $p$ and $q$, the Beddington–DeAngelis functional response can be converted into a ratio-dependent functional response or predator-independent  Holling type-II  functional response. 
Here, we incorporate prey-predator interaction through the Beddington–DeAngelis functional response and compare the system dynamics with the ratio-dependent functional response and  Holling type-II  functional response.

Homogeneous and non-homogeneous stationary solutions of  prey and predator distributions correspond to one of the possible feeding strategies of predators within a permanent habitat and hunting in a nearby area. To avoid predators, some prey species migrate to nearby places \cite{skov2013migration,fryxell2012individual}.  To be successful in capturing  prey, predator species need to follow the prey species with some strategy which leads to the formation of various complex dynamic patterns that include travelling wave, spatio-temporal chaos, and moving pulse solution \cite{banerjee2016prey}. Predator invasion  into the prey habitat leads to the formation of a travelling wave that connects a predator-free steady state  to a coexisting steady state \cite{dunbar1983travelling,banerjee2016prey}.  The existence and non-existence of travelling wave solutions and invasion speed are also interesting topics in dynamical system theory \cite{huang2016geometric,ai2017traveling}. The impact of spatio-temporal chaos on population dynamics is an active area of research in ecology. Appearance of spatio-temporal chaos is widespread and it has also been observed in systems that do not produce  stationary pattern \cite{petrovskii1999minimal}. The stationary and moving pulse solutions are studied in a  prey-predator model following  the prey growth rate equation \eqref{Allee2}  and  Holling type-II functional response \cite{banerjee2020prey}.

Here, we  consider a prey-predator model with prey's growth rate as given in \eqref{Allee2}  and Beddington-DeAngelis functional  response \eqref{Beddington} in the prey-predator interaction. First, we perform a bifurcation analysis of the temporal model and illustrate some representative dynamics through bifurcation diagrams. The global existence of the solutions of the diffusive system  and their global asymptotic behavior are explored in various scenarios. Using energy estimates, we obtain a priori bounds of global steady-state solutions and identify the range of diffusion parameter for the non-existence of spatial patterns. The temporal model exhibits bistable dynamics but the corresponding spatio-temporal model possesses many non-constant spatio-temporal patterns that include localized patterns, various Turing mode patterns, and biological invasion, among others.  We use stability analysis and Leray–Schauder degree theory to show the existence of such non-constant steady states. A variety of stable and unstable non-constant stationary solutions are obtained through numerical continuation. 
Using numerical continuation, we illustrate  the bifurcation scenario of the spatial patterns that include Turing and localized patterns. We also investigate the parametric region for the existence and non-existence of the travelling wave solution and its profile inside the wedge-shaped region of invasion. 

The contents of this paper are as follows. Section \ref{temporalmodel} contains the description of the temporal model along with its equilibria and their bifurcations.  In section \ref{spatialmodel}, we extend it to spatio-temporal model  with no-flux boundary condition and investigate the existence and non-existence of constant and non-constant solutions with their prior bounds.   Using extensive numerical continuation, we present various stationary mode-specific Turing pattern solutions and localized pattern solutions together with their bifurcations in section  \ref{SDNC}. In section \ref{dynamic}, we discuss some complex dynamic
patterns that include travelling wave, moving pulse solution, and spatio-temporal chaos.  Finally, conclusions and discussions are presented in section \ref{discussion}.

\section{Temporal Model}\label{temporalmodel}
Let $N(T)$ and $P(T)$ respectively be the prey and predator densities at time $T$. 
Suppose that the growth rate of the prey population  is subjected to  reproductive Allee effect \eqref{Allee2} and Beddington–DeAngelis functional  response \eqref{Beddington} is chosen to represent the prey-predator interaction. Then, the governing prey-predator system, subject to non-negative initial conditions, is 
\begin{subequations}
\begin{alignat}{4}
 \frac{dN}{dT}&=aN^2(b-N)-cN-\frac{mNP}{p+N+qP},\\
 \frac{dP}{dT}&=\frac{emNP}{p+N+qP}-dP,
\end{alignat}
\label{2}
\end{subequations}
where $d$ and $e$ respectively denote the per capita natural death rate of the predator population and the conversation coefficient. All other parameters have been described before.  Introducing dimensionless variables $u=\frac{N}{b}$, $ v=\frac{m P}{bd}$ and $ t=d T$,  we obtain  dimensionless version of (\ref{2}):
\begin{subequations}
\begin{alignat}{4}
 \frac{du}{dt}&=\big(\sigma u^2(1-u) -\eta u\big)-\frac{uv}{\alpha+u+\beta v} \equiv F_1(u,v), \\
 \frac{dv}{dt}&=\frac{\gamma uv}{\alpha+u+\beta v}-v\equiv F_2(u,v),
\end{alignat}
\label{ode}
\end{subequations}
where $ \alpha=\frac{p}{b},\; \beta=\frac{qd}{m},\; \gamma=\frac{em}{d},$ $\sigma=\frac{ab}{d}$ and  $\eta=\frac{c}{d}$ are dimensionless parameters.

\subsection{Existence and stability of equilibria}
The system (\ref{ode}) has trivial equilibrium point $E_0(0,0)$ irrespective of parameter values. It has none, one or two axial equilibria depending on parameter values. We denote the axial equilibria (whenever they exist) by $E_j(u_j,0)$, $j=1,2$. The $u$ components of the axial equilibria  are roots of the equation
$$
P(u)\equiv\sigma u(1-u) -\eta=0.
$$
For $\sigma>4\eta$, the system (\ref{ode}) has two axial equilibria  $E_{1}(u_1,0)$ and $E_{2}(u_2,0)$,  where 
\begin{equation}\label{axialexpre}
    u_1=\frac{\sigma+ \sqrt{\sigma^2-4\sigma \eta}}{2\sigma} \text{ and } u_2=\frac{\sigma- \sqrt{\sigma^2-4\sigma \eta}}{2\sigma}.
\end{equation}
On the other hand, it has only one axial equilibrium point $E_1(1/2,0)$ for $\sigma=4\eta$  and  no axial equilibrium point for $\sigma<4\eta$. 

A interior equilibrium  $E_*(u_*,v_*)$ is a point of intersection of the nontrivial prey and predator nullclines $f_1(u,v)=0$ and $f_2(u,v)=0$, where 
\begin{equation}
    f_1(u,v)=\sigma u(1-u) -\eta -\frac{v}{\alpha+u+\beta v} \text{ and }
    f_2(u,v)=\frac{\gamma u}{\alpha+u+\beta v}-1.
    \label{predator nullcline}
\end{equation}
The  component $u_*$ of the interior equilibrium  $E_*$ is a root of the cubic equation 
$$
Q(u)\equiv \sigma \gamma \beta{u}^{3}-\sigma \gamma \beta {u}^{2}+ \left( 
\beta \eta \gamma+\gamma-1 \right) u-\alpha=0.
$$  
From (\ref{predator nullcline}), we get
$$
v_*={\frac {\gamma u_*-\alpha-u_*}{\beta}}.
$$
We must have $u_*>\frac{\alpha}{\gamma-1}\geq 0$ for the feasibility of $E_*$.
The system has only the trivial equilibrium point for $\sigma<4\eta$ since $f_1(u,v)<0$ in the positive quadrant of $u$-$v$ plane in this case. For $\sigma>4\eta$,  the coexisting equilibria $E_*$ is feasible if $u_*$ satisfies   $$\text{max}\left\{\frac{\alpha}{\gamma-1},u_2\right\}<u_*<u_1.$$
The Jacobian matrix of the system (\ref{ode}) evaluated at an equilibrium point $E(u,v)$ is
\begin{equation}
J(E)=\begin{bmatrix}
  \sigma u \left( 2-3u \right) -\eta-{\frac {v} {\alpha+u+\beta v}}+{\frac {uv}{ \left(  \alpha+u+\beta v \right) ^{2}}} &  -{\frac {u \left( \alpha+u \right) }{
 \left(  \alpha+u+\beta v \right) ^{2}}} \vspace{0.06in} \\
 {\frac {
\gamma v \left( \alpha+\beta v \right) }{ \left(  \alpha+u+\beta v
 \right) ^{2}}} &  {\frac {\gamma u} {\alpha+u+\beta v}}-{\frac {\gamma uv\beta}{ \left(  \alpha+u+\beta v
 \right) ^{2}}}-1
\end{bmatrix}. 
\label{jaco}
\end{equation}
Clearly, trivial equilibrium point $E_0(0,0)$ is asymptotically stable since both the eigenvalues of $J(E_0)$ are negative. The eigenvalue of  Jacobian matrix evaluated at an axial equilibrium point $E_j(u_j,0)$ ($j=1$ or $2$) are $\sigma u_j(1-2  {u_j})$ and $-1+{\frac {\gamma u_j}{\alpha+u_j}}.$ 
Hence, $E_j$ is asymptotically stable 
if $\frac{1}{2}<u_j< \frac{\alpha}{\gamma-1}$, unstable if $ \frac{\alpha}{\gamma-1}<u_j<\frac{1}{2}$, and a saddle point if $u_j>\text{max}\{\frac{\alpha}{\gamma-1},\frac{1}{2}\}$ or $u_j<\text{min}\{\frac{\alpha}{\gamma-1},\frac{1}{2}\}.$

 The Jacobian matrix evaluated at a coexisting equilibrium point $E_*(u_*,v_*)$  is given by 
 \begin{equation}
J(E_*)= \begin{bmatrix}
-u_* \left( 2 u_*-1 \right) \sigma+{\frac {v_*}{ \left( \beta v_*+\alpha+u_*
 \right) ^{2}}}
 & -{\frac {u_* \left( \alpha+u_* \right) }{
 \left( \beta v_*+\alpha+u_* \right) ^{2}}}
 \vspace{0.06in} \\{\frac {
\gamma v_* \left( \beta v_*+\alpha \right) }{ \left( \beta v_*+\alpha+u_*
 \right) ^{2}}}
 &  -{\frac {\beta v_*}{\beta v_*+\alpha+u_*}}
\end{bmatrix} \equiv \begin{bmatrix}
a_{10}  & a_{01}  \vspace{0.06in} \\
b_{10} &  b_{01}
\end{bmatrix}.
\label{jacoh}
\end{equation}
Using Routh-Hurwitz stability criteria \cite{Perko}, $E_*$ is asymptotically stable if $$\mathrm{T}:=\text{trace}(J(E_*))<0 \text{ and } \mathrm{D}:=\text{det}((J(E_*))>0.$$
Due to unavailability of explicit expression of a interior equilibrium point $E_*$, it is difficult to determine the analytic condition  for the stability of  coexisting equilibrium point. However,  we discuss the existence and stability of of $E_*$ through various bifurcations that occur due to the variation of control parameters.
\subsection{Local bifurcation analysis }
Here we discuss some preliminary  bifurcation results that are required to study of spatio-temporal pattern formation. In this subsection, we follow the same notations as given in \cite{Perko}.
\subsubsection{Saddle-node bifurcation}
\begin{theorem}
For $\gamma>2\alpha+1$, the temporal system (\ref{ode}) undergoes a saddle-node bifurcation   when the quadratic polynomial $P(u)$   has a double root.
 \end{theorem}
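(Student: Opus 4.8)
The plan is to recognise this as a standard saddle-node bifurcation at the degenerate axial equilibrium and to verify the hypotheses of Sotomayor's theorem \cite{Perko}. The double root of $P(u)=\sigma u(1-u)-\eta$ occurs precisely when $\sigma=4\eta$, at which value the two axial equilibria $u_1,u_2$ of \eqref{axialexpre} coalesce into the single point $E_1(\tfrac12,0)$. I would therefore fix all other parameters and take $\eta$ (equivalently $\sigma$) as the bifurcation parameter, with critical value $\eta^{*}=\sigma/4$, so that the condition ``$P$ has a double root'' is exactly $\eta=\eta^{*}$.

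First I would pin down the linear part. Since the entry $b_{10}$ of \eqref{jaco} carries a factor $v$, the Jacobian is upper triangular on the axis $v=0$, and its eigenvalues reduce to the diagonal entries recorded below \eqref{jaco}, namely $\sigma u_j(1-2u_j)$ and $-1+\gamma u_j/(\alpha+u_j)$. At $E_1(\tfrac12,0)$ the first eigenvalue vanishes while the second equals $\gamma/(2\alpha+1)-1$. The hypothesis $\gamma>2\alpha+1$ makes this second eigenvalue strictly positive, hence nonzero, so $0$ is a \emph{simple} eigenvalue, exactly as Sotomayor's theorem demands. A short computation then gives the right null vector $V=(1,0)^{\mathsf T}$ of $A:=J(E_1)\big|_{\eta^{*}}$ and the left null vector $W=(\gamma-2\alpha-1,\,1)^{\mathsf T}$ of $A^{\mathsf T}$.

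Next I would check the two nondegeneracy conditions. Writing $F=(F_1,F_2)^{\mathsf T}$, differentiation gives $F_\eta=(-u,0)^{\mathsf T}$, so $W^{\mathsf T}F_\eta\big|_{E_1}=-\tfrac12(\gamma-2\alpha-1)$, which is nonzero under the stated hypothesis. For the second condition, because $V=(1,0)^{\mathsf T}$ the quadratic form $D^{2}F(V,V)$ selects only the pure second $u$-derivatives; every term of the Beddington--DeAngelis response and of $F_2$ carries a factor $v$ and so drops out on $v=0$, leaving $\partial^{2}F_1/\partial u^{2}=2\sigma-6\sigma u=-\sigma$ at $u=\tfrac12$ and $\partial^{2}F_2/\partial u^{2}=0$. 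Hence $W^{\mathsf T}\bigl[D^{2}F(V,V)\bigr]\big|_{E_1}=-\sigma(\gamma-2\alpha-1)\neq0$. With the simple zero eigenvalue and both quantities nonvanishing, Sotomayor's theorem yields the saddle-node bifurcation.

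The calculations themselves are routine; the real content lies in the role of the inequality $\gamma>2\alpha+1$, and this is the point I would treat most carefully. It simultaneously guarantees simplicity of the zero eigenvalue and the nonvanishing of both Sotomayor quantities (all three degenerate together at $\gamma=2\alpha+1$). At that borderline $A$ collapses to a nonzero nilpotent block with a double zero eigenvalue, signalling a higher-codimension, Bogdanov--Takens-type degeneracy rather than a saddle-node; explaining why the strict inequality rules this out is the subtle step, while the remainder is bookkeeping.
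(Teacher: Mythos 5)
Your proof is correct and follows the same overall route as the paper: locate the double root of $P$ at $\sigma=4\eta$, observe that the two axial equilibria coalesce at $(1/2,0)$, and verify Sotomayor's transversality conditions for a saddle-node. However, your execution differs from the paper's at the one step that actually matters, and yours is the right one. Since $J(E_{\sn})$ is upper triangular on the prey axis with nonzero $(1,2)$ entry $-1/(2\alpha+1)$, its kernel is spanned by $(1,0)^{\mathsf T}$, exactly the $V$ you take; the paper instead uses $\phi=(0,1)^{\mathsf T}$, which is not a null vector of that matrix, so its quantity $\psi^{T}D^{2}F(\phi,\phi)$ tests the $\partial^{2}/\partial v^{2}$ terms of the functional response rather than the relevant curvature $\partial^{2}F_{1}/\partial u^{2}=-\sigma$ coming from the fold of the prey growth term. (Relatedly, the $(2,2)$ entry of the Jacobian at $(1/2,0)$ is $\gamma/(2\alpha+1)-1$ rather than $2\gamma/(2\alpha+1)-1$, which is why your left null vector $(\gamma-2\alpha-1,1)^{\mathsf T}$ differs from the paper's $(2(\gamma-\alpha)-1,1)^{\mathsf T}$; yours is the one consistent with the corrected entry, and it makes the role of the hypothesis $\gamma>2\alpha+1$ transparent.) Your closing remark that the simple-eigenvalue condition and both transversality quantities degenerate simultaneously at $\gamma=2\alpha+1$, where the linearization becomes nilpotent and a Bogdanov--Takens-type singularity is expected, is a correct observation that the paper does not make and that explains why the strict inequality is needed.
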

 \begin{proof}
Suppose that $P(u)$  has a double root $u=u_s$ at $\sigma=\sigma_{\sn},$ i.e., $P(u_s)=P'(u_s)=0$. Now, $P'(u_s)=0$ gives $u_s=\frac{1}{2}$ and  $P(\frac{1}{2})=0$ implies  $\sigma_{\sn}=4 \eta. $ From \eqref{axialexpre}, we observe that $u_1>\frac{1}{2}$ and $u_2<\frac{1}{2}$ for $\sigma>\sigma_{\sn}$. Since  $\frac{\alpha}{\gamma-1}<\frac{1}{2}$, therefore $E_1$ is always a saddle point and $E_2$ is an unstable node or a saddle point depending on the parameter $\sigma$.
 These two axial equilibria merge into a single axial equilibrium point $E_{\sn}(\frac{1}{2},0)$ at $\sigma=\sigma_{\sn}$ and disappear for $\sigma<\sigma_{\sn} $. The Jacobian matrix evaluated  at $E_{\sn}$ for $\sigma=\sigma_{\sn}$ is  
\begin{equation*}
   J(E_{\sn})= \begin{bmatrix}
0 & -\frac{1}{(2\alpha+1)}  \vspace{0.06in} \\
0 &  \frac{2 \gamma}{2 \alpha+1}-1
\end{bmatrix},
\label{jacoE}
\end{equation*}
which has a zero eigenvalue. Let $\phi=[0,\;1]^T$ and $\psi=[ 2(\gamma-\alpha)-1,\;1]^T$ respectively be two eigenvectors of $J(E_{\sn})$ and $ J(E_{\sn})^T$ corresponding to zero eigenvalue. 
Now, we verify the following transversality conditions:
  \begin{eqnarray*}
\psi^T{F}_\sigma({E_{\sn}};\sigma=\sigma_{\sn})&=&\frac{2(\gamma-\alpha)-1}{8}>0,\\
\psi^TD^2{F}({E_{\sn}};\sigma=\sigma_{\sn})(\phi,\phi)&=& - {\frac {4\beta  \left( -\gamma+2 \alpha+1 \right) }{ \left( 2 
\alpha+1 \right) ^{2}}}\neq 0,
\end{eqnarray*}
where ${F}=\left[F_1(u,v),F_2(u,v)\right]^T$.
Hence, the system (\ref{ode}) undergoes a  saddle-node bifurcation  at $\sigma=4\eta.$ The pictorial  representation of the saddle-node bifurcation is shown in $\sigma$-$u$ plane in Fig. \ref{Hopf1}(a).
\end{proof}

\subsubsection{Transcritical bifurcation}
\begin{theorem}
For $\gamma>2\alpha+1$, the system (\ref{ode}) undergoes a transcritical bifurcation around the axial equilibrium point $E_{\tc}(u_{\tc},0)$ at $\sigma\equiv \sigma_{\tc}={\frac {\eta   \left( \gamma- 1 \right) ^{2}}{\alpha  
 \left(\gamma- \alpha- 1 \right) }}
$, where $u_{_{\tc}}=({\sigma_{\tc}-\sqrt{\sigma_{\tc}^2-4\eta\sigma_{\tc}}})/{2\sigma_{\tc}}$.
 \end{theorem}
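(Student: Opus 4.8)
The plan is to prove this exactly as in the saddle-node case, via Sotomayor's theorem, but now tracking the \emph{predator-invasion} eigenvalue of the axial equilibrium instead of the fold of the two axial branches. Recall that the two eigenvalues of $J(E_j)$ are $\sigma u_j(1-2u_j)$ and $-1+\gamma u_j/(\alpha+u_j)$; it is the second one that governs the birth of the interior equilibrium, and it vanishes precisely when $u_j=\alpha/(\gamma-1)$, i.e.\ when the predator nullcline meets the $u$-axis at the axial point. So I would set $u_{\tc}=\alpha/(\gamma-1)$ and impose that this be a root of $P(u)=\sigma u(1-u)-\eta=0$; writing $1-u_{\tc}=(\gamma-1-\alpha)/(\gamma-1)$ and solving $\sigma_{\tc}u_{\tc}(1-u_{\tc})=\eta$ gives $\sigma_{\tc}=\eta(\gamma-1)^2/[\alpha(\gamma-\alpha-1)]$, the stated value. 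Since $u_{\tc}<\tfrac12\iff\gamma>2\alpha+1$, the hypothesis guarantees that $u_{\tc}$ is the smaller root $u_2$ and that the other eigenvalue $\sigma_{\tc}u_{\tc}(1-2u_{\tc})$ is strictly positive, so the zero eigenvalue at $E_{\tc}$ is simple.

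Next I would assemble the data for Sotomayor's theorem at $(E_{\tc},\sigma_{\tc})$. Evaluating the Jacobian \eqref{jaco} at $v=0$ makes it upper triangular, $J(E_{\tc})=\left(\begin{smallmatrix}\sigma_{\tc}u_{\tc}(1-2u_{\tc}) & -u_{\tc}/(\alpha+u_{\tc})\\ 0 & 0\end{smallmatrix}\right)$, so a right null vector is $\phi=(\phi_1,1)^T$ with $\phi_1=1/[(\alpha+u_{\tc})\sigma_{\tc}(1-2u_{\tc})]$ and a left null vector is $\psi=(0,1)^T$. With $F=(F_1,F_2)^T$ and $F_\sigma=(u^2(1-u),0)^T$, the transcritical bifurcation then requires $\psi^TF_\sigma(E_{\tc})=0$ (which excludes a saddle-node), a transversal crossing of the zero eigenvalue, and $\psi^TD^2F(E_{\tc})(\phi,\phi)\neq0$.

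The main obstacle is the transversality (second) condition, and it needs care precisely because $\sigma$ appears only in the prey equation. Then $F_\sigma$ has zero predator component, so $\psi^TF_\sigma=0$ holds automatically --- this is the transcritical signature, and it reflects that $v=0$ is invariant for every $\sigma$, carrying the persistent branch $E_2(\sigma)$ from which $E_*$ bifurcates. For the same reason the literal Sotomayor quantity $\psi^T[DF_\sigma\,\phi]$ also vanishes, so the genuine transversality must instead be extracted from the motion of the equilibrium: I would differentiate $P(u_2,\sigma)=0$ implicitly, obtaining $du_2/d\sigma=-u_2(1-u_2)/[\sigma(1-2u_2)]\neq0$, and hence show the invasion eigenvalue $\mu(\sigma)=\gamma u_2(\sigma)/(\alpha+u_2(\sigma))-1$ crosses zero with nonzero speed $d\mu/d\sigma=[\gamma\alpha/(\alpha+u_{\tc})^2]\,(du_2/d\sigma)$ at $\sigma_{\tc}$. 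The remaining, most calculation-heavy step is $\psi^TD^2F(\phi,\phi)\neq0$: only $F_2$ contributes, and since $F_{2,uu}=0$ at $v=0$ it collapses to $2F_{2,uv}\phi_1+F_{2,vv}$, a combination of $\gamma\alpha/(\alpha+u_{\tc})^2$ and $-2\gamma\beta u_{\tc}/(\alpha+u_{\tc})^2$ that is generically nonzero; this fixes the quadratic normal form and confirms the exchange of stability between $E_2$ and the emerging coexistence equilibrium $E_*$.
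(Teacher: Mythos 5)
Your proposal is correct and, for the most part, runs parallel to the paper's proof: you use the same Sotomayor framework, the same left and right null vectors (your $\phi_1$ agrees with the paper's $p=\frac{u_{\tc}}{(\alpha+u_{\tc})(\sigma(2u_{\tc}-3u_{\tc}^2)-\eta)}$ once $P(u_{\tc})=0$ is used to simplify the denominator to $\sigma u_{\tc}(1-2u_{\tc})$), and the same three quantities $\psi^TF_\sigma$, $\psi^T DF_\sigma\phi$, $\psi^T D^2F(\phi,\phi)$. You also correctly identify that \emph{both} of the first two vanish. Where you genuinely diverge is in what you do with that second vanishing. The paper stops there: it observes that the condition $\psi^T DF_\sigma\phi\neq0$ required for a non-degenerate transcritical bifurcation fails, and accordingly classifies the bifurcation as a \emph{degenerate} transcritical bifurcation, citing the Liu--Shi--Wang framework for bifurcation from a degenerate simple eigenvalue. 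You instead recover transversality by exploiting the invariance of $\{v=0\}$: you follow the persistent branch $E_2(\sigma)$ and show the invasion eigenvalue $\mu(\sigma)=\gamma u_2(\sigma)/(\alpha+u_2(\sigma))-1$ crosses zero with $d\mu/d\sigma\neq0$. This is the Crandall--Rabinowitz-style ``bifurcation from a known branch'' viewpoint; it is legitimate, it explains \emph{why} the pointwise Sotomayor quantity misses the crossing (the missing contribution is $\psi^TD^2F(\phi,\,dE_2/d\sigma)$), and it buys you a cleaner mechanistic derivation of $\sigma_{\tc}$ and $u_{\tc}=\alpha/(\gamma-1)$, which the paper's proof does not spell out. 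Two small points to tighten: (i) you leave the quadratic coefficient only ``generically nonzero,'' whereas a complete proof should evaluate it --- with your (correct) values $F_{2,uu}=0$, $F_{2,uv}=\gamma\alpha/(\alpha+u_{\tc})^2$, $F_{2,vv}=-2\gamma\beta u_{\tc}/(\alpha+u_{\tc})^2$ it equals $\frac{2\gamma}{(\alpha+u_{\tc})^2}\bigl(\alpha\phi_1-\beta u_{\tc}\bigr)$, which can in principle vanish on a codimension-one parameter set, so the nondegeneracy should be stated as an explicit condition (the paper instead asserts an explicit, always-positive expression here); (ii) be aware that your conclusion (a genuine exchange-of-stability transcritical bifurcation) is stated more strongly than the paper's (a degenerate transcritical bifurcation), so if you are matching the theorem as written you should note that your branch-following argument is what upgrades the degenerate case.
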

 \begin{proof}
 For $\sigma<\sigma_{\tc},$ the axial equilibrium point $E_2$ is unstable which becomes saddle for $\sigma>\sigma_{\tc}$ (see Fig. \ref{Hopf1}(a)). The unstable coexisting equilibrium point $E_*$ becomes feasible for $\sigma>\sigma_{\tc}.$
 
 The Jacobian matrix evaluated  at $E_{\tc}(u_{{\tc}},0)$ for $\sigma=\sigma_{\tc}$ is 
\begin{equation*}
   J(E_{\tc})= \begin{bmatrix}
\sigma (2u_{{\tc}}-3 u_{{\tc}}^2)-\eta & -{u_{{\tc}}}/{(\alpha+u_{{\tc}})}  \vspace{0.06in} \\
0 &  0
\end{bmatrix},
\label{jacoTC}
\end{equation*}
which has a zero eigenvalue.  Let $\phi=[p, \; 1]^T$ and $\psi=[0, \; 1]^T$ be two  eigenvectors corresponding to the zero eigenvalue of $J(E_{\tc})$ and $J(E_{\tc})^T$  respectively, where 
$$p=\frac{u_{{\tc}}}{(\alpha+u_{{\tc}})\big(\sigma (2u_{{\tc}}-3 u_{{\tc}}^2)-\eta\big)}.
$$ 
Now, we verify the following transversality conditions:
 \begin{eqnarray*}
 \psi^T{F}_\sigma({E_{\tc}};\sigma=\sigma_{{\tc}})&=&0,\\
 \psi^TD{F}_\sigma({E_{\tc}};\sigma=\sigma_{{\tc}})\phi&=& 0,\\
 \psi^TD^2{F}({E_{\tc}};\sigma=\sigma_{{\tc}})(\phi,\phi)&=& {\frac {2\gamma u_{{\tc}} \beta}{ \left( \alpha+u_{{\tc}} \right) ^{2}}}\left(\frac{\alpha}{(\alpha+u_{{\tc}})\sigma u_{{\tc}}(1-2u_{{\tc}})}+\beta\right)\neq 0. 
 \end{eqnarray*}
 However, the second transversality condition should be nonzero for non-degenerate transcritical bifurcation \cite{Perko}. Hence, the system (\ref{ode}) undergoes a  degenerate  transcritical bifurcation \cite{degeneratetranscritical}  around $E_{\tc}$ at $\sigma=\sigma_{\tc}.$ 

 \end{proof}

\begin{figure}[ht!]
 \begin{subfigure}[b]{.45\textwidth}
 \centering
\includegraphics[scale=0.5]{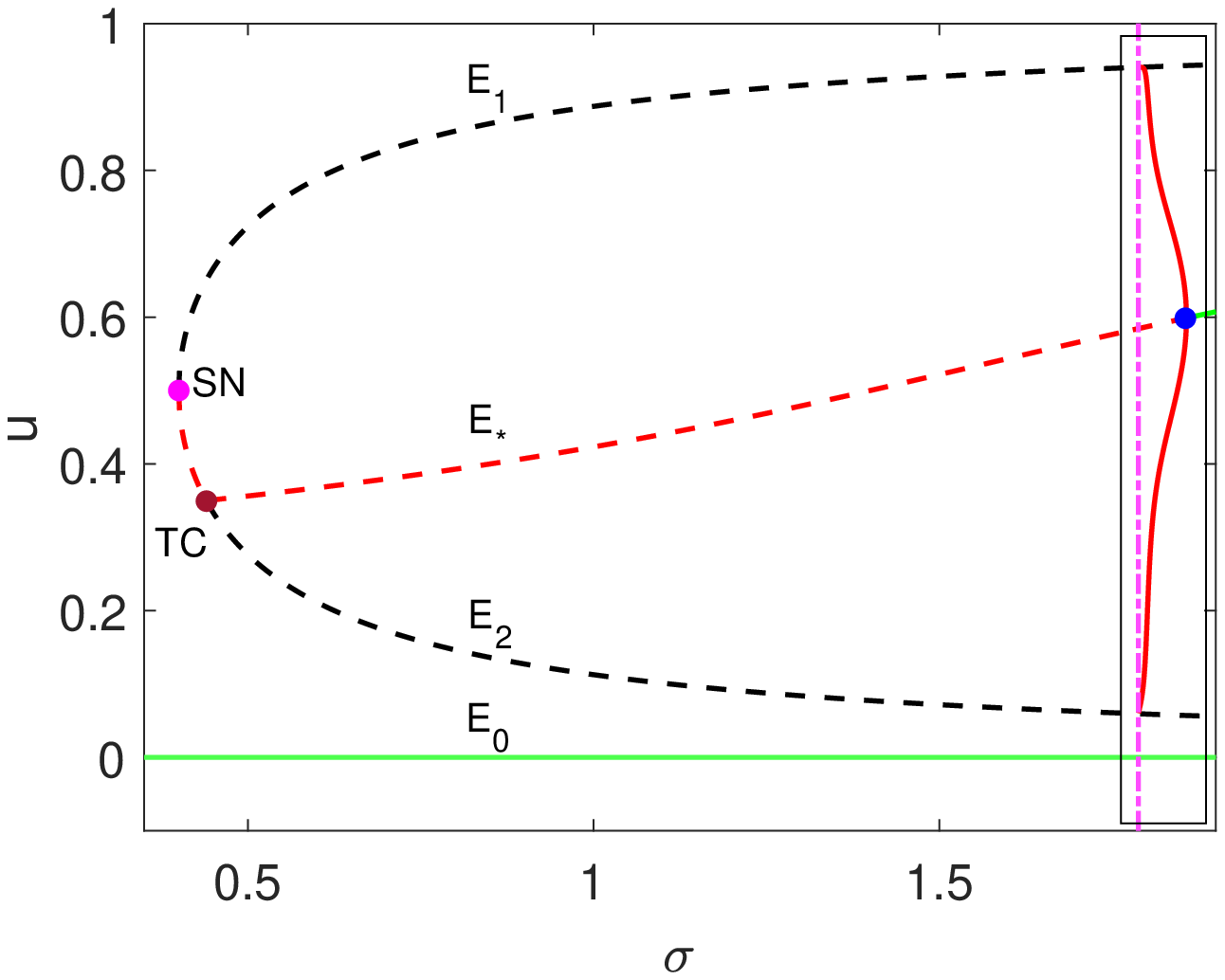}\\ 
 \caption{}
 \end{subfigure}
 \begin{subfigure}[b]{.45\textwidth}
   \centering
\includegraphics[scale=0.5]{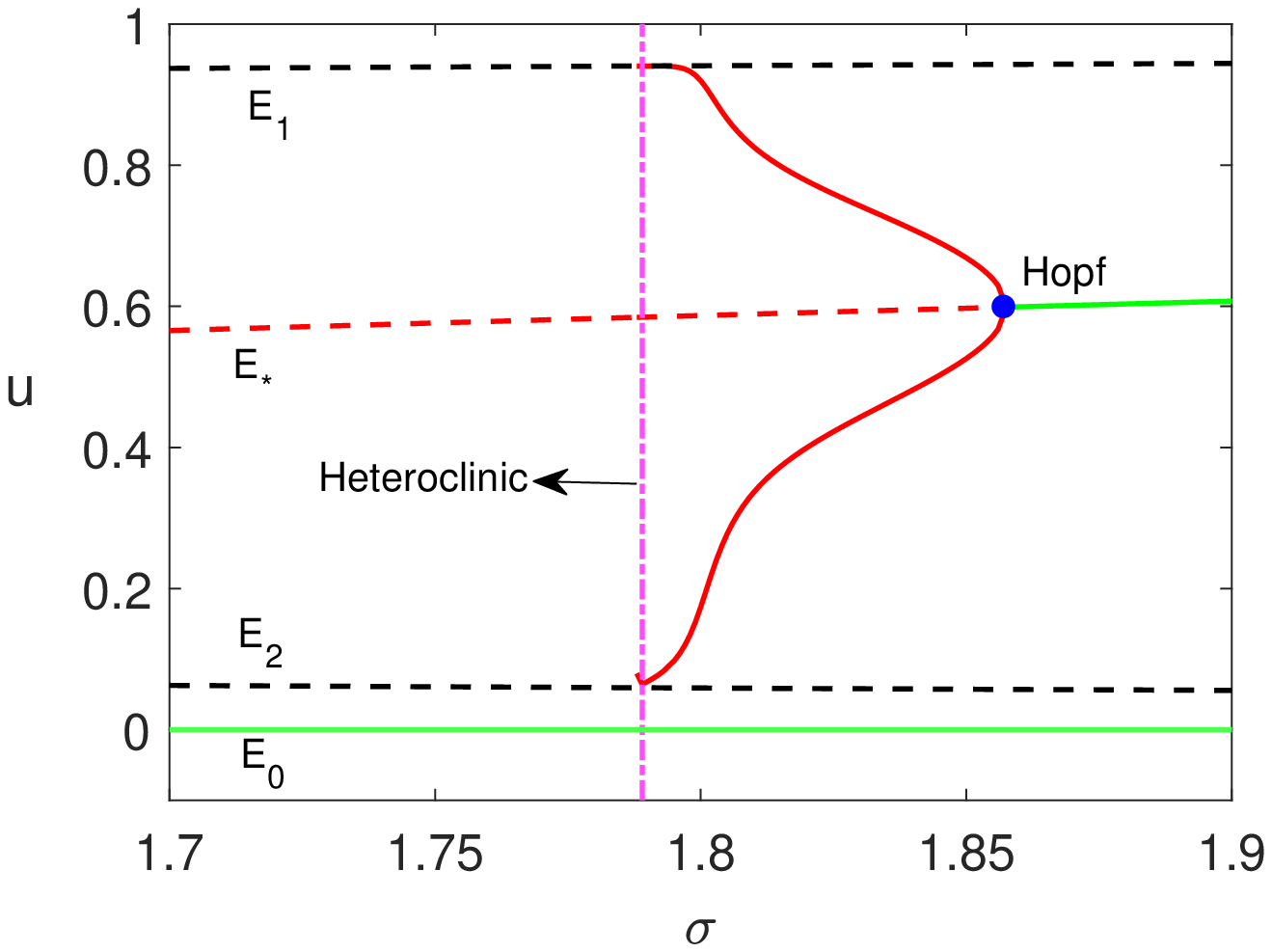}\\ 
 \caption{}
 \end{subfigure} 
\caption{(a) Bifurcation diagram in the $\sigma$-$u$ plane. (b) A zoom version of Fig. (a) for $\sigma\in [1.7,1.9]$. Points marked TC and SN denote the transcritical and saddle-node  bifurcation thresholds respectively. Here, solid green, red dashed and black dashed curves represent stable, unstable  and saddle branches of equilibria respectively. Further, solid red color curve represents the maximum and minimum of $u$ of the stable limit cycle.  Parameter values are $\alpha=0.07,\beta=0.2,  \gamma=1.2$ and $ \eta=0.1$.}
\label{Hopf1}
\end{figure}

\subsubsection{Hopf bifurcation}
 An unstable coexisting equilibrium point $E_*$ can become stable via a Hopf Bifurcation when the trace of the Jacobian matrix evaluated at $E_*$ changes from positive to negative value due to variation of a model parameter.  Here, we choose $\sigma$ as a Hopf bifurcation parameter.
 
 Solving
 $$
 T_{\sigma_{_H}}\equiv tr[J(E _*;\sigma=\sigma_{_H})]=0,
 $$
 we find
$$\sigma_{_H}={\frac { \left( 1-v_*{\beta}^{2}- \left( \alpha+u_* \right) \beta
 \right) v_*}{ \gamma^{2}u_*^3 \left( 2 u_*-1
 \right) }}.
 $$ 
 This is an implicit expression for $\sigma$ since $E_*(u_*,v_*)$ depends on $\sigma$.  The system (\ref{ode}) undergoes a Hopf bifurcation  at   $\sigma=\sigma_{_{H}}$ if the following non-hyperbolicity and transversality conditions are  satisfied: 
 \begin{eqnarray*}
&C1:&  \; \;  D_{\sigma_{_H}}\equiv \mathrm{det}[J(E _*;\sigma=\sigma_{_H})]>0,\\   
&C2:& \; \; \frac{d}{d \sigma} (T_{\sigma}) \vert_{\sigma=\sigma_{_H}} \neq 0.
\end{eqnarray*}
For $\sigma>\sigma_{_H}$, $E_*$ is always asymptotically stable. A limit cycle surrounding  $E_*$ is generated through a Hopf bifurcation at $\sigma=\sigma_{_H}.$  The stability  of the limit cycle is determined by the first Lyapunov coefficient $l_1$ \cite{Perko}.  The Hopf bifurcation is supercritical when  $l_1<0$ and results in a stable limit cycle. In contrary, the subcritical Hopf bifurcation  results in an unstable limit cycle for $l_1>0$. Due to unavailability  of explicit expression of the interior equilibrium $E_*$, it is impossible to obtain the sign of $l_1$ analytically. However, we obtain Hopf bifurcation and its stability numerically. We find that Hopf bifurcation occurs around  ${E_*}=(0.5986,0.2486)$ at $\sigma_{_H}=1.8566$ for parameter values $\alpha=0.07,\beta=0.2, \gamma=1.2$ and $ \eta=0.1$. The corresponding first Lyapunov coefficient $l_1=-22.7488\pi<0$, indicating that the  Hopf bifurcation  is supercritical.  The corresponding temporal bifurcation diagram  is plotted in the Fig. \ref{Hopf1}(b). The Hopf generating stable limit cycle vanishes through a heteroclinic bifurcation at $\sigma_{Het}=1.789<\sigma_H.$ For $\sigma<\sigma_{Het}$, trivial equilibrium point $E_0$ is  globally asymptotically stable for the temporal system (\ref{ode}). A decrease in prey growth drives the system from stable coexistence to oscillatory coexistence and further decrease results in system collapse through a global bifurcation.
 
\section{Spatio-temporal  Model}\label{spatialmodel}
Random movement of the population species is taken into account by incorporating diffusion term in the temporal model. For simplicity, we consider the spatio-temporal system in  one-dimensional spatial domain $\Omega:=(0,L)$. The dimensionless spatio-temporal system is 
\begin{subequations}
\begin{equation}
\frac{\partial u}{\partial t}=  \frac{\partial^2 u}{\partial x^2}+F_1(u,v),
\label{second:a}
\end{equation}
\begin{equation}
\frac{\partial v}{\partial t}=  d\frac{\partial^2 u}{\partial x^2}+F_2(u,v),
\label{second:b}
\end{equation}
\label{pde}
\end{subequations}
where $d$ is is ratio of the diffusion coefficients of predator and prey populations. The above system is  subjected to initial conditions $u_0(x)\equiv u(x,0)\geq 0$, $v_0(x)\equiv v(x,0)\geq 0$ for $x\in \Omega$ and no-flux boundary conditions $\frac{\partial u}{\partial n}=\frac{\partial v}{\partial n}=0$ for $x\in \partial\Omega:=\{0,L\}$.

\subsection{Existence and boundedness of global solution}

In order to ensure well-posedness of the model (\ref{pde}), we establish the global existence of the solutions of the system (\ref{pde})  and their priori bounds under specified initial and boundary condition.  Let $\mathscrsfs{D}=\Omega\times (0,\infty)$, $\bar{\mathscrsfs{D}}={\bar \Omega}\times (0,\infty)$ and $\mathscrsfs{B}=\partial\Omega\times(0,\infty)$. 
\begin{theorem} \label{prop3}
For the system (\ref{pde}), the following hold whenever $\sigma>4\eta$:
\begin{itemize}
    \item[(a)]  If $u_0(x)\geq0 $ and $v_0(x)\geq 0$, then $u(x,t)>0$  and $v(x,t)>0$ in $\bar{\mathscrsfs{D}}$ unless $\big(u(x,t),v(x,t)\big)\equiv (0,0)$ in $\bar{\mathscrsfs{D}}$.
    \item[(b)] If $\sup _{\bar \Omega} u_0(x) < u_2$, then $\big(u(x,t),v(x,t)\big)\rightarrow(0,0)$ uniformly as $t\rightarrow \infty.$
    
    \item[(c)]  If $\gamma<1$, then $\big(u(x,t),v(x,t)\big)$ $\rightarrow$ $(u_A(x),0),$ where $u_A(x)$ is the solution of 
     \begin{equation*}
     \begin{cases}
\nabla^2 u +\big(\sigma u^2(1-u) -\eta u\big)=0, \text{ for } x\in \Omega,\\
\frac{\partial u}{\partial n}=0, \text{ for } x\in \partial\Omega . 
 \end{cases}
 \end{equation*}
     \item[(d)] For any solution  $\big(u(x,t),v(x,t)\big)$  of \eqref{pde},
     $$\limsup_{t\rightarrow \infty}\int _\Omega u(x,t)\,dx\leq u_1 \vert \Omega \vert \mbox{ and }\limsup_{t\rightarrow \infty} \int _\Omega v(x,t)\,dx \leq \gamma\left(1+\frac{\sigma}{4}-\eta\right)u_1\vert \Omega \vert.$$ 
\end{itemize}
\end{theorem}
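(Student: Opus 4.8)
The plan is to handle the four assertions by combining maximum-principle and comparison arguments for parts (a), (b), (d) with an asymptotically-autonomous reduction for part (c), obtaining global existence as a by-product of the a priori bounds. First I would record the factorizations $F_1(u,v)=u\,g_1(u,v)$ and $F_2(u,v)=v\,g_2(u,v)$ with $g_1=\sigma u(1-u)-\eta-\frac{v}{\alpha+u+\beta v}$ and $g_2=\frac{\gamma u}{\alpha+u+\beta v}-1$. For part (a), since $g_1,g_2$ are bounded along any bounded solution, the two equations are linear uniformly parabolic equations for $u$ and for $v$ with zeroth-order coefficients $g_1,g_2$; the no-flux condition together with $u_0,v_0\ge0$ and the strong maximum principle (plus Hopf's lemma on $\partial\Omega$) then force $u,v>0$ in $\bar{\mathscrsfs{D}}$ unless a component vanishes identically. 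This simultaneously shows that the nonnegative quadrant is invariant.

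The workhorse for (b) and (d) is scalar comparison. Writing $P(u)=\sigma u(1-u)-\eta=\sigma(u_1-u)(u-u_2)$ and using $v\ge0$, the prey equation satisfies $u_t-u_{xx}=uP(u)-\frac{uv}{\alpha+u+\beta v}\le uP(u)$, so $u$ is a subsolution of $w_t=w_{xx}+wP(w)$. Comparing with the spatially constant supersolution $\bar u(t)$ solving $\bar u'=\bar u P(\bar u)$, $\bar u(0)=\sup_\Omega u_0$, gives $u(x,t)\le \bar u(t)$. For (b), $\sup u_0<u_2$ forces $P(\bar u)<0$ along the orbit, so $\bar u\downarrow0$ and $u\to0$ uniformly; once $\sup_x u<\alpha/\gamma$ the bracket $g_2$ is uniformly negative and a scalar comparison for $v$ yields exponential decay, giving $(u,v)\to(0,0)$. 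For the first bound in (d), $u_1$ is asymptotically stable for $\bar u'=\bar u P(\bar u)$ from above and from $(u_2,u_1)$, so $\bar u(t)\to u_1$ when $\sup u_0>u_2$ (and $\bar u\to0\le u_1$ otherwise), giving $\limsup_t\max_x u\le u_1$ and hence $\limsup_t\int_\Omega u\,dx\le u_1|\Omega|$. For the second bound I would integrate the combination $Y(t)=\int_\Omega(\gamma u+v)\,dx$, where the two Beddington–DeAngelis terms cancel, to obtain
\[
Y'=-Y+\gamma\int_\Omega u\bigl(P(u)+1\bigr)\,dx\le -Y+\gamma\Bigl(1+\tfrac{\sigma}{4}-\eta\Bigr)\int_\Omega u\,dx,
\]
using $\sigma u(1-u)\le\sigma/4$. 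Feeding in $\limsup\int_\Omega u\le u_1|\Omega|$ and applying Gronwall's inequality yields $\limsup Y\le\gamma(1+\sigma/4-\eta)u_1|\Omega|$, and since $\int v\le Y$ the stated $v$-bound follows. Global existence then drops out: $u$ is bounded in $L^\infty$ by the comparison above, and for bounded $u\le M$ one has $F_2\le(\gamma M/\alpha-1)v$, i.e.\ at most exponential growth, ruling out finite-time blow-up and extending the solution to $\bar{\mathscrsfs{D}}$.

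Part (c) is the most delicate and is where I expect the real work. When $\gamma<1$ one has $\frac{\gamma u}{\alpha+u+\beta v}\le\gamma\frac{u}{\alpha+u}<\gamma<1$, so $g_2<\gamma-1<0$; integrating the $v$-equation (the diffusion term vanishes under no-flux) and applying Gronwall gives $\int_\Omega v\,dx\le e^{(\gamma-1)t}\int_\Omega v_0\,dx\to0$, which parabolic smoothing upgrades to $v\to0$ uniformly. The prey equation is then asymptotically autonomous with limit the scalar Allee equation $u_t=u_{xx}+\sigma u^2(1-u)-\eta u$ whose equilibria are the $u_A$ of the stated boundary-value problem. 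The obstacle is to pass from \emph{$v\to0$} to convergence of $u$ to a single steady state $u_A(x)$: the limiting one-dimensional Neumann problem is a gradient system whose bounded orbits converge to equilibria (Zelenyak/Matano), and I would invoke the theory of asymptotically autonomous semiflows to place the $\omega$-limit set of $(u,v)$ inside $\{(u_A,0)\}$. Making this rigorous — controlling the perturbation $\frac{uv}{\alpha+u+\beta v}$ uniformly in the relevant function-space topology and excluding drift along a continuum of equilibria — is the main technical hurdle, whereas (a), (b) and (d) reduce to the comparison estimates above.
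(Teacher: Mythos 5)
Your proposal is correct and, for parts (b)--(d), follows essentially the same route as the paper: comparison of $u$ with the spatially homogeneous ODE $\bar u'=\bar u P(\bar u)$ to get $\limsup_t u\le u_1$, the cancellation of the interaction terms in $\frac{d}{dt}\int_\Omega(\gamma u+v)\,dx$ followed by a Gronwall argument for the $v$-bound, and the asymptotically autonomous reduction to the scalar gradient (Allee) equation for (c). The one genuine methodological difference is in (a) and in the global-existence step: the paper observes that \eqref{pde} is a mixed quasi-monotone system, exhibits $(0,0)$ and the ODE solution $(\hat u(t),\hat v(t))$ as ordered lower and upper solutions, and invokes Pao's theorem for such systems to obtain global existence, the invariant rectangle, and the positivity dichotomy in one stroke; you instead factor $F_i$ as $u g_1$ and $v g_2$, apply the strong maximum principle and Hopf's lemma componentwise, and recover global existence from the $L^\infty$ bound on $u$ plus the at-most-exponential growth of $v$. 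Both are standard and valid; the paper's route packages everything into one cited theorem, while yours is more self-contained but requires you to justify the $L^\infty$ bound before positivity rather than receiving both simultaneously. One small inefficiency in your (c): rather than deducing $\int_\Omega v\,dx\to 0$ and then upgrading to uniform decay by parabolic smoothing (which needs an $L^1\!\to\!L^\infty$ semigroup estimate and a Duhamel argument), the pointwise comparison $v_t-dv_{xx}\le(\gamma-1)v$ against the constant supersolution $\bigl(\sup v_0\bigr)e^{(\gamma-1)t}$ gives uniform decay directly, which is what the paper does via $\hat v$.
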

\begin{proof}
(a) Since  
$$
\frac{\partial F_1(u,v)}{\partial v}\leq0\;\,\mbox{ and }\;\, \frac{\partial F_2(u,v)}{\partial u}\geq0\quad\mbox{for}\;\, (u,v)\in \mathbb{R}^2_{\geq0}=\{u\geq0,v\geq0\},
$$
the system \eqref{pde} is a mixed quasi-monotone system \cite{pao2012nonlinear,travebook}. Let $\big(\hat u(t), \hat v(t)\big)$ be the solution of the differential equations 
\begin{equation}
     \begin{cases}
 \frac{du}{dt}=\sigma u^2(1-u) -\eta u\equiv \sigma u(u-u_1)(u_2-u),\\[5pt]
 \frac{dv}{dt}=\displaystyle{\frac{\gamma uv}{\alpha+u+\beta v}-v},\\[5pt]
  u(0)=\sup _{\bar\Omega} u_0(x) \text{ and } v(0)=\sup _{\bar\Omega} v_0(x),
 \end{cases}
 \label{existode}
 \end{equation}
where $u_1$ and $u_2$  are given in \eqref{axialexpre}. Clearly $\hat u(t)<\rho_1$ and $\hat v(t)<\rho_2$ \cite{dey2022analytical},  where $$\rho_1=\text{max}\left\{\hat u(0), u_1\right\}\; \text{and}\;\rho_2=\text{max}\left\{\hat v(0),   u_1 \left(1+\frac{\gamma}{\beta}+\frac{\sigma}{4} -\eta \right)\right\}.$$ 
Using the definition 8.1.2 in \cite{pao2012nonlinear}, we find that $\big(\underline u(x,t),\underline v(x,t)\big)=(0,0)$ and  $\big(\overline u(x,t),\overline v(x,t)\big)=\big(\hat u(t),\hat v(t)\big)$  are lower and upper solutions of system \eqref{pde} since  $$\frac{\partial\overline u(x,t)}{\partial t}-\nabla^2 \overline{u}(x,t)- F_1(\overline{u}(x,t),\underline{v}(x,t))=0 \geq 0= \frac{\partial\underline u(x,t)}{\partial t}  -\nabla^2 \underline{u}(x,t)- F_1(\underline{u}(x,t),\overline{v}(x,t)),
$$
$$\frac{\partial\overline v(x,t)}{\partial t}-\nabla^2 \overline{v}(x,t)- F_2(\overline{u}(x,t),\overline{v}(x,t))=0 \geq 0=  \frac{\partial\underline v(x,t)}{\partial t}    -\nabla^2 \underline{v}(x,t)- F_2(\underline{u}(x,t),\underline{v}(x,t)),
$$
for $(x,t)\in {\mathscrsfs{D}}$ with boundary conditions 
$$
\frac{\partial \overline{u}(x,t)}{\partial n}\geq 0 \geq \frac{\partial \underline{u}(x,t)}{\partial n}, \quad      \frac{\partial \overline{v}(x,t)}{\partial n}\geq 0 \geq \frac{\partial \underline{v}(x,t)}{\partial n}\; \mbox{ for } (x,t) \in \mathscrsfs{B}, 
$$ and initial conditions
$$
\underline u(x,0)\leq u_
0(x)\leq\overline{u}(x,0)\text{ and }\underline v(x,0)\leq v_0(x)\leq\overline{v}(x,0)\; \text{ for } x \in \Omega.$$
Now, using theorem 8.3.3 of \cite{pao2012nonlinear},  we conclude that the system \eqref{pde} admits unique globally defined solution $\big(u(x,t),v(x,t)\big)$ which satisfies 
$$0\leq u(x,t)\leq\hat u(t)\text{ and }0\leq v(x,t)\leq\hat v(t).$$
From the expressions of $\rho_1$ and $\rho_2$ given above, we observe that
$$
F_1(\rho_1,0)\leq 0\leq F_1(0,\rho_2)\text{ and }F_2(\rho_1,\rho_2)\leq 0\leq F_2(0,0).
$$ 
Hence, if we define 
$$
S=\left\{(u,v)\in C^2(\bar{\mathscrsfs{D}}):\; 0\leq u\leq\rho_1, 0\leq v\leq \rho_2\right\},
$$
and $\big(u(x,0), v(x,0)\big)\in S$, then  $u(x,t)>0, v(x,t)>0$ or $\big(u(x,t),v(x,t)\big)\equiv (0,0)$ for $(x,t)\in \bar{\mathscrsfs{D}}$ (see theorem 8.3.3 of \cite{pao2012nonlinear}).

(b) Since $\hat u(0)=\sup _{\bar \Omega} u_0(x)$, we have $\hat u(0)< u_2$. Now, from the first equation of \eqref{existode} we find that $\hat u(t)\rightarrow 0$ as $t\rightarrow \infty$ and consequently  $\hat v(t)\rightarrow 0$ as $t\rightarrow \infty$. Thus, $\big(u(x,t),v(x,t)\big)\rightarrow(0,0)$ uniformly as $t\rightarrow \infty.$

(c) If $\gamma<1,$ then 
$$
\frac{d \hat v}{dt}\leq (\gamma-1)\hat v<0,
$$
which gives $\hat{v}(t)\rightarrow 0$ as $t\rightarrow\infty$. Since, $0\leq v(x,t)\leq \hat{v}(t)$, we conclude that ${v}(x,t)\rightarrow 0$ uniformly as $t\rightarrow\infty$  for $x\in  \bar\Omega.$ Therefore the limiting behavior of $u(x,t)$ is determined by the semi-flow generated by the parabolic 
equation 
\begin{equation}
     \begin{cases}
u_t=\nabla^2 u +\big(\sigma u^2(1-u) -\eta u\big), \text{ for } x\in \Omega,\,t>0,\\
\frac{\partial u}{\partial n}=0, \text{ for } x\in \partial\Omega.
 \end{cases}
 \label{grad}
 \end{equation}
 Now, every orbit of the gradient system \eqref{grad} converges to a steady state
 $u_A(x)$ \cite{hale2010asymptotic}. Then, using the theory of asymptotically autonomous dynamical system \cite{hale2010asymptotic}, 
 we conclude that the solution $(u(x,t),v(x,t))\rightarrow(u_A(x),0)$ as $t\rightarrow\infty$. 
 
 (d) If $\hat u(0)> u_2,$ then $\hat u(t)\rightarrow u_1$ as $t\rightarrow \infty$. Thus for any $\epsilon>0,$ there exist $T_0>0$ such that $u(x,t)\leq u_1+\epsilon$ for $(x,t)\in \bar\Omega\times (T_0,\infty)$. Thus $$\limsup_{t\rightarrow \infty}\int _\Omega u(x,t)\,dx\leq u_1 \vert \Omega \vert.$$
 To estimate the bound of $v(x,t)$, we introduce $P(t)=\int _\Omega u(x,t)\,dx$ and $Q(t)=\int _\Omega v(x,t)\,dx$. Then 
\begin{eqnarray*}
\frac{dP}{dt}&=& \int_\Omega \nabla^2 u \;dx+\int _\Omega\left[\sigma u^2(1-u) -\eta u-\frac{uv}{\alpha+u+\beta v}\right]dx ,
\label{9}\\[3pt]
\frac{dQ}{dt}&=& d\int_\Omega \nabla^2 v \;dx+\int _\Omega\left[\frac{\gamma uv}{\alpha+u+\beta v}-v\right]dx.
\label{10}
\end{eqnarray*}
 Using Neumann boundary condition, we obtain  \begin{equation*} \label{eq1}
\begin{split}
  \frac{d}{dt}(\gamma P+Q)&= \int _\Omega\gamma \big(\sigma u^2(1-u) -\eta u\big)\,dx-Q\,\leq\,-(\gamma P+Q)+\gamma(1+\frac{\sigma}{4}-\eta)P\\
       &\leq -(\gamma P+Q)+\gamma(1+\frac{\sigma}{4}-\eta)(u_1+\epsilon)\vert \Omega \vert\; \text{ for } t>T_0.
\end{split}
\end{equation*}
  Integrating the above gives
  $$\gamma P(t)+Q(t)\leq \gamma(1+\frac{\sigma}{4}-\eta)(u_1+\epsilon)\vert \Omega \vert+\epsilon\;\, \text{ for } t>T_1,$$
  where $T_1>T_0$. Since $P(t)\geq0$, we find $$\limsup_{t\rightarrow\infty}Q(t)\leq\gamma(1+\frac{\sigma}{4}-\eta)u_1\vert \Omega \vert.$$
\end{proof}

\subsection{ Homogeneous steady state analysis}
The equilibrium points of the temporal model \eqref{ode} correspond to  homogeneous steady-states of spatio-temporal model. Therefore, the system \eqref{pde} has homogeneous steady-states corresponding to the trivial steady state $E_0$, semi-trivial steady states $E_1$ and $E_2$,  and the coexisting steady state $E_*.$ Here we discuss the stability of these homogeneous steady state solutions. 
\begin{theorem}
Suppose $\sigma> \sigma_{TC}$ and $\gamma>1$, then the following conditions hold:
\begin{itemize}
    \item[(a)]  Trivial steady state $E_0(0,0)$ is always asymptotically  stable.
    \item[(b)] Semi-trivial steady state $E_1(u_1,0)$ and $E_2(u_2,0)$ are always unstable.
    \item[(c)]  Coexisting steady state $E_*(u_*,v_*)$ is  asymptotically  stable if $$\sigma>\frac{v_*}{\gamma^2u_*^3(2u_*-1)}\equiv\sigma_S,$$
    and unstable if $\sigma< \sigma_{_H}.$
\end{itemize}
\end{theorem}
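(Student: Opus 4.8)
The plan is to linearise \eqref{pde} about each homogeneous steady state and exploit that, under the no-flux boundary conditions on $\Omega=(0,L)$, the operator $-\partial_{xx}$ has eigenfunctions $\cos(n\pi x/L)$ with eigenvalues $k_n^2$, $k_n=n\pi/L$, $n=0,1,2,\dots$. Writing the diffusion matrix as $D=\mathrm{diag}(1,d)$, a perturbation in the $n$-th mode is governed by the $2\times2$ matrix $J_n(E):=J(E)-k_n^2 D$; hence a steady state is asymptotically stable exactly when every $J_n$ ($n\ge0$) has both eigenvalues in the open left half-plane, and is unstable as soon as a single $J_n$ possesses an eigenvalue with positive real part. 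The key structural remark is that the $n=0$ mode reproduces the temporal Jacobian $J(E)$, so any temporal instability immediately forces instability of the corresponding spatio-temporal state for every $d$ and $L$.

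For part (a) a direct evaluation gives $J(E_0)=\mathrm{diag}(-\eta,-1)$, so $J_n(E_0)=\mathrm{diag}(-\eta-k_n^2,\,-1-d k_n^2)$ has two strictly negative eigenvalues for all $n$, settling stability of $E_0$. For part (b) I would use the temporal eigenvalues of $J(E_j)$ already recorded, namely $\sigma u_j(1-2u_j)$ and $-1+\gamma u_j/(\alpha+u_j)$. Feasibility of $E_*$ for $\sigma>\sigma_{TC}$ forces $u_1>\tfrac12$ and $u_1>\alpha/(\gamma-1)$, so $J(E_1)$ carries the positive eigenvalue $-1+\gamma u_1/(\alpha+u_1)>0$, while $u_2<\tfrac12$ makes $\sigma u_2(1-2u_2)>0$. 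In both cases the $n=0$ matrix already has an unstable eigenvalue, so $E_1$ and $E_2$ are unstable independently of the diffusivity.

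The substance is in part (c). First I would simplify $J(E_*)$ via the predator-nullcline identity $\alpha+u_*+\beta v_*=\gamma u_*$ coming from $f_2(u_*,v_*)=0$; this reduces the $(1,1)$ entry to $a_{10}=-\sigma u_*(2u_*-1)+v_*/(\gamma^2 u_*^2)$ and exhibits $\sigma_S$ as precisely the value for which $a_{10}=0$, so that the hypothesis $\sigma>\sigma_S$ is read as the algebraic sign condition $a_{10}<0$. The remaining entries retain fixed signs $a_{01}<0$, $b_{10}>0$, $b_{01}<0$. When $a_{10}<0$ one has $\mathrm{tr}\,J(E_*)=a_{10}+b_{01}<0$ and $\det J(E_*)=a_{10}b_{01}-a_{01}b_{10}$ is a sum of two positive terms, hence positive. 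Viewing the modal determinant as the quadratic $\det J_n(E_*)=d\,k_n^4-(d\,a_{10}+b_{01})k_n^2+\det J(E_*)$ in $k_n^2$, the linear coefficient $-(d a_{10}+b_{01})$ is positive because $a_{10},b_{01}<0$, so the quadratic is strictly positive for every $k_n^2\ge0$; together with $\mathrm{tr}\,J_n=\mathrm{tr}\,J(E_*)-(1+d)k_n^2<0$ this rules out any diffusion-driven instability and yields asymptotic stability. For the instability claim, the Hopf analysis shows $\mathrm{tr}\,J(E_*)$ vanishes at $\sigma_H$ and is positive for $\sigma<\sigma_H$; then $J_0=J(E_*)$ has positive trace, hence an eigenvalue with positive real part, and $E_*$ is unstable through the homogeneous mode.

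The main obstacle I anticipate is bookkeeping the implicit $\sigma$-dependence of $(u_*,v_*)$: rather than attempting to prove monotonicity of $a_{10}$ in $\sigma$, the clean route is to read $\sigma_S$ off as the sign-change locus of $a_{10}$ at the equilibrium and then argue purely from the signs of the Jacobian entries, so that $\sigma>\sigma_S\Leftrightarrow a_{10}<0$ with no monotonicity required. It is also worth remarking that $\sigma_S>\sigma_H$, since $\sigma_S/\sigma_H=1/\bigl(1-\beta^2 v_*-\beta(\alpha+u_*)\bigr)>1$; the theorem therefore deliberately leaves the window $\sigma_H<\sigma<\sigma_S$ — exactly where $a_{10}>0$ while the trace is still negative — to the separate Turing analysis developed later.
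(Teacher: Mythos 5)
Your proof is correct and follows essentially the same route as the paper: decompose into Neumann modes, reduce to the matrices $J(E)-k_n^2\mathcal{D}$, and argue from the signs of the Jacobian entries, with the $n=0$ mode carrying the temporal (in)stability over to the spatio-temporal setting. One small point in your favour: for $E_2$ you invoke the prey eigenvalue $\sigma u_2(1-2u_2)>0$, which is the right one to use, since for $\sigma>\sigma_{TC}$ the predator eigenvalue $-1+\gamma u_2/(\alpha+u_2)$ at $E_2$ has in fact turned negative, whereas the paper's proof cites the predator eigenvalue for both axial states.
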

\begin{proof}
The linearized system of (\ref{pde}) about a homogeneous steady state $E(u_e,v_e)$ can be expressed as 
\begin{equation*}
    \frac{\partial W}{\partial t}=\mathcal{L}(W):=\mathcal{D} \nabla^2 W+J(E) W,
\end{equation*}
where $J(E)$ is defined in \eqref{jaco}, $\mathcal{D}=$diag$(1,d)$ and $W\in C^2(\bar{\mathscrsfs{D}})\times  C^2(\bar{\mathscrsfs{D}}).$

 Let $0=k_0<k_1<\cdots<k_j<\cdots$ be the eigenvalues and $E(k_j)$ be the eigenfunction space corresponding to $k_j$  for the eigenvalue problem
 \begin{eqnarray*}
 &&-\nabla^2 w= k w \quad  \mbox{in} \;\Omega,\\
 &&\frac{\partial w}{\partial n}=0\qquad\quad\;\mbox{on}\;  \partial \Omega.
 \end{eqnarray*}
  Further, suppose that $\{\psi_{i,j}: i=1,\cdots,\text{dim}\big(E(k_j)\big)\}$ be an orthogonal basis set of $E(k_j)$ and $\mathcal{W}_{ij}=\{c \psi_{i,j}: c=(c_1,c_2)^T \}$. Let  $\mathcal{W}_j=\bigoplus_{i=1}^{\mbox{dim}(E(k_j))} \mathcal{W}_{ij}$  be the direct sum of $\mathcal{W}_{ij}$. 
  It can be shown that  \begin{equation}
    \mathcal{W}\equiv\left\{\left(\phi,\psi\right)^T \in C^2(\bar \Omega)\times  C^2(\bar \Omega): \frac{\partial \phi}{\partial n}=\frac{\partial \psi}{\partial n}=0\; \text{ for } x\in \partial\Omega\right\}=\bigoplus_{j=1}^\infty \mathcal{W}_j,\label{defw}
  \end{equation}
  and $\mathcal{W}_j$ is invariant under the operator $\mathcal{L}$. Now, $\lambda$ is an eigenvalue of $\mathcal{L}$ if and only if $\lambda$ is an eigenvalue of the matrix $\mathcal{L}_j=-k_j\mathcal{D}+J(E)$ for some $j\geq0.$  The characteristic equation of $\mathcal{L}_j$ is given by 
  \begin{equation*}
      \text{det} (\lambda I -\mathcal{L}_j)=\lambda^2-\text{trace} (\mathcal{L}_j)\lambda+\text{det} (\mathcal{L}_j),
  \end{equation*}
where $\text{trace} (\mathcal{L}_j)=a_{10}+b_{01}-(1+d)k_j,\;\text{det} (\mathcal{L}_j)=dk_j^2-(da_{10}+b_{01})k_j+(a_{10}b_{01}-a_{01}b_{10}).$

(a) At $E_0$, we find $J(E_0)=\begin{pmatrix}
   -\eta & 0\\
   0 &-1
    \end{pmatrix},$ and $\text{trace} (\mathcal{L}_j)=-1-\eta-(1+d)k_j<0,\;\text{det} (\mathcal{L}_j)=dk_j^2+(d\eta+1)k_j+\eta>0.$ Hence, $E_0$ is always asymptotically  stable.
    
(b) For $E(u,0)$, we find $J(E)=\begin{pmatrix}
   \sigma u(1-2  {u}) & \frac {u}{\alpha+u}\\
   0 &-1+{\frac {\gamma u}{\alpha+u}}
    \end{pmatrix}.$  
   For $j=0$, $\mathcal{L}_j$  has one positive eigenvalue $-1+\frac {\gamma u}{\alpha+u}>0$ since $\gamma>1$ and $\sigma> \sigma_{TC}$. Thus, both $E_1$ and $E_2$ are  unstable. 
   
   (c) From the Jacobian matrix $J(E_*)$ given in \eqref{jacoh}, we find that  $a_{01}<0$, $b_{10}>0$ and $b_{01}<0.$  Now, $E_*$ is asymptotically stable if and only if   \begin{equation}
       \text{trace} (\mathcal{L}_j)<0 \text{ and }\text{det} (\mathcal{L}_j)>0 \text{ for all } j.
       \label{cond}
   \end{equation}
    
   If $\sigma>\sigma_S$, then $a_{10}<0$ and both the conditions in (\ref{cond}) are satisfied. 
   Thus, the steady state solution $E_*$ is stable when $\sigma>\sigma_S$.
   On the other hand,  $a_{10}+b_{01}>0$ for $\sigma<\sigma_{H}$ which leads to $\text{trace} (\mathcal{L}_j)>0$ for $j=0$.  Therefore, one of the eigenvalues of $\mathcal{L}_0$ must have positive real part and the steady state $E_*$ becomes unstable.
\end{proof}
\begin{remark}From Theorem \ref{prop6}, we observe that $E_*$ is asymptotically stable for $\sigma>\sigma_{S}$ and unstable for $\sigma<\sigma_{H}$ independent of the diffusion parameter value $d$. Hence, the system \eqref{pde} shows bistability between the trivial steady state $E_0$ and the coexisting steady state $E_*$ for $\sigma>\sigma_{S}$. The system may show various non-constant stationary solutions for $\sigma<\sigma_{S}$,  which we discuss in section \ref{SDNC}.
\end{remark}
 \begin{remark}
Since the semi-trivial steady states are always saddle points, there is a possibility of appearance of travelling wave solution which we discuss in section \ref{TRWS}.
\end{remark}

\subsection{Steady state solution}
The spatio-temporal system \eqref{pde} admits time independent solutions $(u(x),v(x))$ which satisfy 
\begin{equation}
    \frac{d^2u}{dx^2}\,+\,F_1(u,v)\;=\;0,\;\;d\frac{d^2v}{dx^2}\,+\,F_2(u,v)\;=\;0,\quad\text{ for }x\in \Omega,
 \label{PSS}
\end{equation}
along with the boundary conditions $\displaystyle{ \frac{\partial u}{\partial n}=\frac{\partial v}{\partial n}=0 \text{ on } \partial  \Omega.}$ Here we determine the conditions which establish the existence and non-existence of such solutions.

\subsubsection{Nonexistence of non-constant steady state solution}
Here we show the non-existence of non-constant solution. Before proving the main theorem,  we need the following proposition.
\begin{prop}\label{prop5}
Suppose that  $(u(x),v(x))$ is a non-negative nontrivial solution of (\ref{PSS}). Then $(u(x),v(x))$ is of the form of semi-trivial solution   $(u(x),0)$ which satisfy \eqref{grad} or coexisting  solution $(u(x),v(x))$ which satisfy  $$0<u(x)<u_1 \text{ and }  0<v(x)<M^*\equiv\gamma  u_1 \left(\frac{\sigma}{4} -\eta \right) \text{ for }x\in \Omega.$$
\end{prop}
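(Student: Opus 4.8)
The plan is to combine a strong-maximum-principle dichotomy with two pointwise a priori bounds obtained by testing \eqref{PSS} at the maxima of well-chosen quantities. First I would settle positivity and the semi-trivial/coexisting split. Writing the prey equation as $u''+c_1(x)\,u=0$ with $c_1(x)=\sigma u(1-u)-\eta-v/(\alpha+u+\beta v)$ bounded, the strong maximum principle (after absorbing the zeroth-order term) together with the no-flux condition forces either $u\equiv 0$ or $u>0$ on $\bar\Omega$. If $u\equiv 0$, the predator equation collapses to $d v''-v=0$, whose only non-negative solution under Neumann boundary conditions is $v\equiv 0$ (integrate over $\Omega$ and use $v\ge 0$); this is the excluded trivial state, so a nontrivial solution has $u>0$. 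Applying the same dichotomy to the predator equation $dv''+c_2(x)\,v=0$, with $c_2(x)=\gamma u/(\alpha+u+\beta v)-1$, gives $v\equiv 0$ or $v>0$. In the first case the Beddington--DeAngelis term in $F_1$ vanishes and $u$ solves \eqref{grad}, which is exactly the semi-trivial alternative; so it remains to treat the coexisting case $v>0$ and prove the two upper bounds.

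For the prey bound, let $x_0\in\bar\Omega$ maximize $u$. In one space dimension the condition $u'=0$ at the endpoints guarantees $u''(x_0)\le 0$ even when $x_0\in\partial\Omega$ (one-sided Taylor expansion), so \eqref{PSS} yields $F_1(u(x_0),v(x_0))\ge 0$. Since $u(x_0)>0$ and the functional-response term is non-negative, this forces $P(u(x_0))=\sigma u(x_0)(1-u(x_0))-\eta\ge v(x_0)/(\alpha+u(x_0)+\beta v(x_0))>0$, the last inequality being strict because $v>0$. As $P$ is a downward parabola with roots $u_2<u_1$, positivity of $P(u(x_0))$ places $u(x_0)$ strictly inside $(u_2,u_1)$, whence $0<u(x)\le u(x_0)<u_1$ for all $x\in\Omega$.

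The predator bound is the delicate step, and I expect it to be the main obstacle: the maximum principle applied to the $v$-equation alone only delivers $\beta v\le(\gamma-1)u-\alpha$ at the maximum of $v$, i.e. a bound of order $(\gamma-1)u_1/\beta$, not the claimed $M^*$. The form $M^*=\gamma u_1(\sigma/4-\eta)$ is the steady-state counterpart of the integral estimate in Theorem \ref{prop3}(d), which points to the combined variable $w=\gamma u+dv$. The key algebraic fact is that the Beddington--DeAngelis cross terms cancel, $\gamma F_1+F_2=\gamma\sigma u^2(1-u)-\gamma\eta u-v$, so that $w$ satisfies $w''=v-\gamma u\,P(u)$. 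Evaluating at a maximizer $z_0$ of $w$ (the boundary handled as above) gives $w''(z_0)\le 0$, hence $v(z_0)\le\gamma u(z_0)P(u(z_0))$; combining $0<u(z_0)<u_1$ with $\sigma u(1-u)\le\sigma/4$ bounds the right-hand side by $\gamma u_1(\sigma/4-\eta)=M^*$.

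The point that must be handled with care is promoting this estimate at the maximizer of $w$ to the pointwise inequality $v(x)<M^*$ asserted for every $x\in\Omega$. I would close this either by sharpening the maximum-principle argument for $w$ directly, or---mirroring the proof of Theorem \ref{prop3}(d)---by integrating $w''=v-\gamma u\,P(u)$ over $\Omega$, where the no-flux condition annihilates the boundary term and $0<u<u_1$ controls $\int_\Omega\gamma u\,P(u)\,dx$. Assembling the positivity from the dichotomy, the prey bound $u<u_1$, and the predator bound then yields the stated alternative between the semi-trivial profile solving \eqref{grad} and a coexisting profile confined to $0<u<u_1$ and $0<v<M^*$.
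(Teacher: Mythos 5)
Your overall architecture matches the paper's: the strong-maximum-principle dichotomy is exactly the paper's first step, and the combined variable $\gamma u+dv$ (exploiting the cancellation $\gamma F_1+F_2=\gamma u\,P(u)-v$) is precisely the device the paper uses for the predator bound. Your prey bound is obtained differently --- you evaluate $F_1\ge 0$ at a maximizer of $u$ and read off $P(u(x_0))>0$, whereas the paper imports $u\le u_1$ from the large-time estimate of Theorem \ref{prop3}(d) and then invokes the strong maximum principle for strictness; your route is self-contained at the elliptic level and is, if anything, cleaner.

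The genuine gap is where you say it is, and neither of your two proposed repairs closes it. Evaluating $w''=v-\gamma u\,P(u)$ at a maximizer $z_0$ of $w$ bounds $v(z_0)$, but transferring to general $x$ via $w(x)\le w(z_0)$ costs you $dv(x)\le dM^*+\gamma\bigl(u_1-u(x)\bigr)$, i.e.\ an error of order $\gamma u_1/d$ that does not vanish; and integrating the identity over $\Omega$ only yields $\int_\Omega v\,dx\le \gamma u_1(\sigma/4-\eta)\lvert\Omega\rvert$, an $L^1$ bound, not the asserted pointwise one. The missing idea is to convert the identity into a differential \emph{inequality with a zeroth-order term of fixed sign}: write
$$-(\gamma u+dv)''=\gamma u\,P(u)-v=-\tfrac{1}{d}(\gamma u+dv)+\gamma u\bigl(P(u)+\tfrac{1}{d}\bigr)\le -\tfrac{1}{d}(\gamma u+dv)+\gamma u_1\bigl(\tfrac{\sigma}{4}-\eta+\tfrac{1}{d}\bigr),$$
and apply the Lou--Ni maximum principle to get the \emph{uniform} bound $\gamma u+dv\le \gamma d\,u_1(\sigma/4-\eta+1/d)$ on all of $\Omega$; this is exactly how the paper proceeds. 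Be aware, though, that even this yields $v<\gamma u_1(\sigma/4-\eta)+\gamma(u_1-u)/d$ after discarding $\gamma u\ge0$, so the constant $M^*$ as stated in the proposition is really $\gamma u_1(\sigma/4-\eta+1/d)$ up to the paper's own silent absorption of the $\gamma u_1/d$ term --- a discrepancy your analysis correctly senses but cannot be expected to repair, and one that is harmless for the later uses of the proposition (Theorem \ref{prop6} and the degree argument only need \emph{some} uniform bound). Incidentally, your observation that the $v$-equation alone gives $\beta v\le(\gamma-1)u_1-\alpha$ at the maximizer of $v$ is itself already a correct pointwise bound on $v$ everywhere; it simply is not the constant the proposition advertises.
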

\begin{proof}
If $v(x_0)=0$ for some $x\in \Omega$, then  strong  maximum principle  implies $v(x)\equiv0$  and $u(x)$ satisfies  \eqref{grad}. On the other hand, if there exists $x_0\in \Omega$ such that $u(x_0)=0$, then strong  maximum principle gives $u(x)\equiv0$, which also implies that $v(x)\equiv0$.  Hence, the other possibility is $u(x)>0$ and $v(x)>0$ for $x\in\Omega$.  

 From theorem \ref{prop3}(d), we have $u(x)\leq u_1$ for $x\in \Omega$ as  $t\rightarrow \infty$,  and thus the strong  maximum principle implies $u(x)< u_1 \text{ for }x\in \Omega.$ Now from \eqref{PSS}, we find  \begin{equation*} 
\begin{split}
  -(\gamma\nabla^2 u+ d  \nabla^2v)=\gamma\big(\sigma u^2(1-u) -\eta u\big)- v   &= -\frac{1}{d}(\gamma u+d v)+\gamma u\left(\sigma u(1-u) -\eta +\frac{1}{d}\right)\\&
  \leq -\frac{1}{d}(\gamma u+d v) +\gamma u_1 \left(\frac{\sigma}{4} -\eta +\frac{1}{d}\right).
\end{split}
\end{equation*}
Then using maximum principle theorem \cite{lou1996diffusion}, we find 
$$(\gamma u+d v)\leq\gamma d u_1 \left(\frac{\sigma}{4} -\eta +\frac{1}{d}\right)\;\text{ for }x\in \Omega,$$
which  leads to the desired estimate $$0<v(x)<M^*\;\text{ for }x\in \Omega.$$
\end{proof}

\begin{theorem} \label{prop6}
If $\sigma>4\eta$ and $ \gamma>1$, then there exists  $d^*=d^*(\alpha,\beta,\gamma,\sigma,\eta,\vert \Omega \vert)$ such that 
the system (\ref{pde}) does not admit a non-constant steady state solution for $d^*<d<1$.
\end{theorem}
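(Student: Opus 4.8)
The plan is to argue by contradiction through an energy (Poincar\'e-type) estimate, using Proposition~\ref{prop5} to confine every non-negative nontrivial steady state to a fixed box $\mathcal{R}=[0,u_1]\times[0,M^*]$ that is independent of $d$. Suppose $(u,v)$ is a non-constant steady state of \eqref{PSS}. By Proposition~\ref{prop5} there are two cases: either $v\equiv 0$ with $u$ solving the scalar problem \eqref{grad}, or $0<u<u_1$ and $0<v<M^*$ throughout $\Omega$. I write $\bar u=\frac{1}{|\Omega|}\int_\Omega u\,dx$, $\bar v=\frac{1}{|\Omega|}\int_\Omega v\,dx$ for the spatial means, and recall that the smallest positive Neumann eigenvalue on $\Omega=(0,L)$ is $k_1=(\pi/|\Omega|)^2$, so the Poincar\'e--Wirtinger inequality gives $\int_\Omega(u-\bar u)^2\,dx\le k_1^{-1}\int_\Omega u_x^2\,dx$ and likewise for $v$.

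For the semi-trivial case, multiplying \eqref{grad} by $(u-\bar u)$, integrating by parts with the Neumann condition, and subtracting the constant $g(\bar u)$ (where $g(u)=\sigma u^2(1-u)-\eta u$, which is legitimate since $\int_\Omega(u-\bar u)\,dx=0$), I obtain $\int_\Omega u_x^2\,dx\le k_1^{-1}(\sup_{[0,u_1]}|g'|)\int_\Omega u_x^2\,dx$, so the gradient vanishes whenever $\sup|g'|<k_1$. For the coexistence case I carry out the two-equation version: multiplying the equations in \eqref{PSS} by $(u-\bar u)$ and $(v-\bar v)$ respectively, integrating by parts, and again subtracting $F_i(\bar u,\bar v)$ yields
$$\int_\Omega u_x^2\,dx=\int_\Omega\big[F_1(u,v)-F_1(\bar u,\bar v)\big](u-\bar u)\,dx,\qquad d\int_\Omega v_x^2\,dx=\int_\Omega\big[F_2(u,v)-F_2(\bar u,\bar v)\big](v-\bar v)\,dx.$$
Since $\mathcal{R}$ is compact and the Beddington--DeAngelis denominator satisfies $\alpha+u+\beta v\ge\alpha>0$ on $\mathcal{R}$, all first partials of $F_1,F_2$ are bounded there by constants $L_{ij}$ depending only on $\alpha,\beta,\gamma,\sigma,\eta$. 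Estimating the right-hand sides by the mean value theorem, absorbing cross terms via $|u-\bar u||v-\bar v|\le\frac12[(u-\bar u)^2+(v-\bar v)^2]$, and applying Poincar\'e, I reach
$$\Big(1-\tfrac{C}{k_1}\Big)\int_\Omega u_x^2\,dx+\Big(d-\tfrac{C}{k_1}\Big)\int_\Omega v_x^2\,dx\le 0,\qquad C:=\max\{L_{11},L_{22}\}+\tfrac12(L_{12}+L_{21}).$$

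I then define $d^*:=C/k_1=C|\Omega|^2/\pi^2$, which depends precisely on the advertised quantities. The stated interval $(d^*,1)$ is nonempty exactly when $d^*<1$, i.e. $C<k_1$; in that regime the prey coefficient $1-C/k_1=1-d^*$ is positive, and for any $d$ with $d^*<d<1$ the predator coefficient $d-C/k_1=d-d^*$ is positive as well. The displayed inequality then forces $\int_\Omega u_x^2\,dx=\int_\Omega v_x^2\,dx=0$, so $(u,v)$ is constant, contradicting non-constancy. The same threshold also yields $\sup|g'|\le C<k_1$, which closes the semi-trivial case, so no non-constant steady state of either type can survive.

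The step I expect to be the crux is pinning down $C$ together with the accompanying smallness condition. The a priori confinement of Proposition~\ref{prop5} is indispensable here: it is what makes the bounds $L_{ij}$, and hence $d^*$, independent of $d$ and of the particular solution, since otherwise the reaction terms could not be controlled uniformly on the whole domain. The only genuinely delicate bookkeeping is choosing the Young split of the cross terms so that a single clean constant $C$ emerges and the prey coefficient $1-d^*$ and predator coefficient $d-d^*$ are simultaneously nonnegative on $(d^*,1)$; a sharper, asymmetric split would enlarge the admissible range of $d$ but is not needed for the statement as given.
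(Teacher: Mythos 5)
Your argument is essentially the paper's own proof: test the steady-state equations against $u-\bar u$ and $v-\bar v$, control the resulting quadratic forms using the a priori box from Proposition~\ref{prop5}, split the cross terms by Young's inequality, and invoke the Poincar\'e--Wirtinger inequality to force the gradients to vanish once $\min\{1,d\}$ exceeds a threshold $d^*$ proportional to $1/k_1$. The only cosmetic difference is that the paper extracts explicit constants $A$ and $B$ from the specific structure of $F_1,F_2$ (in its Appendix~\ref{Ap1}) rather than quoting generic Lipschitz bounds $L_{ij}$ on the compact box, so your proof is correct and follows the same route.
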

\begin{proof}
Let  $\big(u(x),v(x)\big)$ be a non-constant solution of (\ref{PSS}). Then
$$
\int _\Omega (u-\bar u) dx=\int _\Omega (v-\bar v) dx=0,\,\,\textrm{where} \,\,\bar u=\frac{1}{\Omega}\int _\Omega u(x) dx, \text{ and }  \quad\bar v=\frac{1}{\Omega}\int _\Omega v(x) dx.
$$ 
Multiplying the first equation of (\ref{PSS}) by $u-\bar u$ and  integrating over $\Omega$ using the no-flux boundary
condition, we find 
\begin{equation*} 
  \int _\Omega \vert\nabla(u-\bar u)\vert^2 dx=\int _\Omega (u-\bar u) F_1(u,v)dx = \int _\Omega (u-\bar u) \big(F_1(u,v)-F_1(\bar u,\bar v)\big)dx\equiv I_1+I_2,
\end{equation*}
where.
\begin{alignat*}{4}
I_1&=\int _\Omega (u-\bar u)\big(u(u-u_1)(u_2-u)-\bar u(\bar u-u_1)(u_2-\bar u)\big) dx, \;\\I_2&=\int _\Omega (\bar u- u)\left(\frac{uv}{\alpha+u+\beta v}-\frac{\bar u \bar v}{\alpha+\bar u+\beta\bar  v}\right) dx.
\end{alignat*}

Similarly,  multiplying the second equation of (\ref{PSS}) by $v-\bar v$ and integrating over $\Omega$ using no-flux boundary
condition, we find 
\begin{equation*} 
\begin{split}
  d\int _\Omega \vert\nabla(v-\bar v)\vert^2 dx=&\int _\Omega (v-\bar v) F_2(u,v)dx\\
  =& \int _\Omega (v-\bar v)^2 \left(\frac{\gamma u}{\alpha+u+\beta v}-1\right)dx+I_3,\\
\end{split}
\end{equation*}
where \begin{equation*} 
\begin{split}
  I_3=& \int _\Omega (v-\bar v)\bar v \left(\frac{\gamma u}{\alpha+u+\beta v}-1\right)dx.
\end{split}
\end{equation*}
Using upper bounds of $I_1$, $I_2$ and $I_3$ (see Appendix \ref{Ap1}), we get
\begin{equation}
     \int _\Omega \vert\nabla(u-\bar u)\vert^2 dx\leq \left(\frac{u_1}{2\alpha}+u_1(2-u_2)\right)\int _\Omega (u-\bar u)^2dx +\frac{u_1}{2\alpha}\int _\Omega (v-\bar v)^2dx.
     \label{none1}
\end{equation}

\begin{equation}
     d\int _\Omega \vert\nabla(v-\bar v)\vert^2 dx\leq \frac{\gamma}{2\beta}\int _\Omega (u-\bar u)^2dx +\left(\frac{\gamma}{2\beta}+(\gamma-1)\right)\int _\Omega (v-\bar v)^2dx.
     \label{none2}
\end{equation}
Adding \eqref{none1}, \eqref{none2} and using Poincare inequality, we find 
\begin{equation}
      \int _\Omega \vert\nabla(u-\bar u)\vert^2 dx+ d\int _\Omega \vert\nabla(v-\bar v)\vert^2 dx\leq  \frac{1}{k_1}\left(A\int _\Omega \vert\nabla(u-\bar u)\vert^2 dx+ B\int _\Omega \vert\nabla(v-\bar v)\vert^2 dx\right),
     \label{none3}
\end{equation}
where $$A=\frac{\gamma}{2\beta}+\frac{u_1}{2\alpha}+u_1(2-u_2)\; \text{ and }B=\frac{\gamma}{2\beta}+(\gamma-1)+\frac{u_1}{2\alpha}.$$
If $$\text{min}\{1,d\}\geq  \frac{1}{k_1}\text{max}\{A,B\}\equiv d^*,$$ then $$\nabla(u-\bar u)=\nabla(v-\bar v)=0,$$ which leads to constant solution.
\end{proof}
\begin{remark}
Theorem \ref{prop6} is not applicable for any parameter set when $d^*>1$. For $d^*<1,$ the first wavenumber $k_1$ must be large which leads to a small domain size. To verify theorem \ref{prop6} with numerical simulation, consider a one-dimensional domain of length $\vert \Omega \vert=1$ with parameters values $\eta = 0.1,$ $\alpha = 0.2,$  $\beta = 2.4,$ $\gamma = 1.3,$ and $\sigma = 1.5.$
Then we obtain $d^*=0.4439$ with these parameter values and the system always shows a homogeneous steady-state  corresponding to either the trivial steady state $E_0$ or the coexistence steady state $E_*$ for $d^*<d<1.$ 
\end{remark}

\subsection{ Existence of non-constant steady state solution}
Here we establish the existence of positive non-constant steady solution of \eqref{pde} using Leray–Schauder degree theory.  We define 
$\displaystyle{ W=(u,v)^T,\; W_*=(u_*,v_*)^T,\;
\mathcal{W}^+=\{W\in \mathcal{W}\;:u,v>0\, \text{ on } \bar \Omega\},}$ and $\displaystyle{ \mathcal{F}(W)=(F_1,d^{-1}F_2)^T,}$
where $\mathcal{W}$ has been defined in (\ref{defw}). We write the system (\ref{PSS}) as 
\begin{equation}
    \begin{cases}
-\nabla^2 W=\mathcal{F}(W), &x\in \Omega, \\
 \frac{\partial W}{\partial n}=0, &x\in \partial  \Omega.
 \end{cases}
 \label{PSSS}
\end{equation}
Now the system (\ref{PSSS}) has a positive solution $W$ if and only if 
$$\mathcal{T}(W):=W-(\mathcal{I}-\nabla^2)^{-1}\{\mathcal{F}(W)+W\}=0\; \text{ for } W \in \mathcal{W}^+,$$
where $\mathcal{I}$ is the identity operator, $ (\mathcal{I}-\nabla^2)^{-1}$  is the inverse of $(\mathcal{I}-\nabla^2)$  under Neumann boundary condition.  We observe that  
$$D_W\mathcal{F}(W_*)=\mathcal{A},\; \text{and } D_W\mathcal{T}(W_*)=\mathcal{I}-(\mathcal{I}-\nabla^2)^{-1}(\mathcal{I}+\mathcal{A}),$$
where $$\mathcal{A}=\begin{pmatrix} a_{10} & a_{01}\\d^{-1}b_{10}& d^{-1}b_{01}\end{pmatrix}.$$
Now, $\lambda$ is an eigenvalue of $D_W\mathcal{T}(W_*)$ if and only if $\lambda(1+k_j)$ is an eigenvalue of the matrix $(k_j \mathcal{I}-\mathcal{A}).$ Hence, $D_W\mathcal{T}(W_*)$ is invertible if and only if $$\mathcal{H}(k_j,d):=\text{det} (k_j \mathcal{I}-\mathcal{A})=d^{-1}\big(dk_j^2-k_j(da_{10}+b_{01})+\mathrm{D}
\big)\neq0 \text{ for all }j\geq0.$$  
By the Leray–Schauder theorem, we know that if $D_W\mathcal{T}(W_*)$ is invertible then $$\text{index} (\mathcal{T(.),W_*})=(-1)^\xi,$$
where $\xi$ is the sum of the algebraic multiplicities of all the negative eigenvalues of $D_W\mathcal{T}(W_*).$ This leads to the following lemma \cite{index}:
\begin{lemma}
If $\mathcal{H}(k_j,d)\neq0$ for all $j\geq 1$, then $$\mathrm{index}(\mathcal{T(.),W_*})=(-1)^\xi,$$ where \[\xi:=\sum_{j\geq1,\; \mathcal{H}(k_j,d)<0}\mathrm{dim }\,E(k_j).\]
\end{lemma}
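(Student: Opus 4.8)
The plan is to take the Leray--Schauder index formula quoted just above the lemma as the starting point, so that $\mathrm{index}(\mathcal{T}(\cdot),W_*)=(-1)^{\nu}$, where $\nu$ denotes the sum of the algebraic multiplicities of the negative real eigenvalues of $D_W\mathcal{T}(W_*)$, and then to show that $\nu$ has the same parity as the quantity $\xi$ in the statement, which is all that matters for $(-1)^{\nu}$. First I would record that $D_W\mathcal{T}(W_*)$ is genuinely invertible under the hypotheses, so that the formula applies: for $j\geq1$ invertibility is exactly the assumption $\mathcal{H}(k_j,d)\neq0$, while for $j=0$ one has $\mathcal{H}(0,d)=d^{-1}\mathrm{D}>0$, so no vanishing eigenvalue arises from the constant mode and the index is well defined.

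Next I would use the spectral correspondence already established in the excerpt: $\lambda$ is an eigenvalue of $D_W\mathcal{T}(W_*)$ if and only if $\lambda(1+k_j)$ is an eigenvalue of the $2\times2$ matrix $k_j\mathcal{I}-\mathcal{A}$ for some $j\geq0$. Each eigenvalue occurs with multiplicity $\dim E(k_j)$, since $\mathcal{W}_j$ decomposes into $\dim E(k_j)$ two-dimensional subspaces $\mathcal{W}_{ij}$, each invariant under $D_W\mathcal{T}(W_*)$ and on which the operator acts through $k_j\mathcal{I}-\mathcal{A}$. Because $1+k_j>0$, the sign of $\lambda$ coincides with the sign of the corresponding eigenvalue of $k_j\mathcal{I}-\mathcal{A}$. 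Writing $n_j\in\{0,1,2\}$ for the number of negative real eigenvalues of $k_j\mathcal{I}-\mathcal{A}$, this gives the exact count
\[
\nu=\sum_{j\geq0}\dim E(k_j)\,\cdot\,n_j .
\]

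The crux of the argument is the elementary observation that, for a real $2\times2$ matrix, the parity of $n_j$ is controlled entirely by the sign of its determinant $\mathcal{H}(k_j,d)=\det(k_j\mathcal{I}-\mathcal{A})$. If $\mathcal{H}(k_j,d)<0$, the two eigenvalues are real with opposite signs, so $n_j=1$; if $\mathcal{H}(k_j,d)>0$, the eigenvalues form either a complex conjugate pair (no real negative eigenvalue) or a pair of real eigenvalues of equal sign, so $n_j\in\{0,2\}$ is even. Thus every mode with $\mathcal{H}(k_j,d)>0$ contributes an even amount to $\nu$; in particular the $j=0$ mode, for which $\mathcal{H}(0,d)=d^{-1}\mathrm{D}>0$, contributes evenly, which is precisely why the sum in the lemma may begin at $j=1$.

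Combining these facts reduces $\nu$ modulo $2$ to
\[
\nu\equiv\sum_{j\geq1,\;\mathcal{H}(k_j,d)<0}\dim E(k_j)=\xi \pmod{2},
\]
so that $(-1)^{\nu}=(-1)^{\xi}$ and the lemma follows, since only the parity of the exponent affects the sign. I expect the main obstacle to be the careful bookkeeping of algebraic multiplicities rather than any deep analytic point: one must verify that every eigenvalue of $k_j\mathcal{I}-\mathcal{A}$ is replicated exactly $\dim E(k_j)$ times in the spectrum of $D_W\mathcal{T}(W_*)$, and one must treat complex eigenvalues correctly so that only genuinely real negative eigenvalues are counted in the index, while the Leray--Schauder formula itself is simply invoked from the cited reference.
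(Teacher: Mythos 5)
Your proposal is correct and follows exactly the argument the paper intends: the paper states this lemma without proof, citing Pang and Wang, but the eigenspace decomposition $\mathcal{W}=\bigoplus_j\mathcal{W}_j$ and the correspondence between eigenvalues of $D_W\mathcal{T}(W_*)$ and those of $k_j\mathcal{I}-\mathcal{A}$ set up immediately beforehand are precisely the ingredients of your mode-by-mode parity count. Your handling of the $j=0$ mode via $\mathcal{H}(0,d)=d^{-1}\mathrm{D}>0$ (which holds under the standing hypothesis \eqref{hypth}) and the determinant-sign argument for the parity of the number of negative eigenvalues of each $2\times2$ block are exactly the standard details being invoked.
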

We now describe a theorem which guarantees the existence of at least one positive non-constant solution. For this we assume that
\begin{equation}
a_{10}>0\;\mathrm{ and }\; a_{10}d + b_{01}  > 2 \sqrt{ d\mathrm{D} }> 0.\label{hypth}
\end{equation}
Then $\mathcal{H}(k,d)=0$ has two real roots $k_{\pm}$, where 
 $$
k_{\pm}= \frac{(da_{10}+b_{01})\pm\sqrt{(da_{10}+b_{01})^2-4d\mathrm{D}}}{2d}.
$$ 
\begin{theorem}
Suppose that (\ref{hypth}) holds.  If there exist some integers $0 \leq n_ 1 < n_ 2$ such that \begin{equation} \label{20}
    k_{-}\in( k_{n _1} , k_{n_ 1 +1}),\;k_{+}\in( k_{n _2} , k_{n_ 2 +1}),
\end{equation}
and $\sum_{j=n_1+1}^{n_2}\text{dim}E(k_j)$ is odd, then model (\ref{pde}) has at least one positive non-constant solution.
\end{theorem}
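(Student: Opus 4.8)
The plan is to count solutions of \eqref{PSSS} through the Leray--Schauder degree of $\mathcal{T}(\cdot)$ on a large bounded open subset of the positive cone $\mathcal{W}^+$, and to expose a non-constant solution by a parity mismatch. First I would fix the ambient domain. Proposition \ref{prop5} together with Theorem \ref{prop3}(d) confines every positive solution of \eqref{PSS} to $0<u<u_1$, $0<v<M^*$, while a positive lower barrier (Harnack inequality / strong maximum principle applied to each scalar equation) keeps $u,v$ bounded below by a positive constant; hence there is an open bounded set
$$\mathcal{B}=\{\,W\in\mathcal{W}^+:\ \epsilon<u<u_1+\epsilon,\ \epsilon<v<M^*+\epsilon\,\}$$
containing $W_*$ and every positive solution, on whose boundary $\mathcal{T}$ has no zero. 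Thus $\deg(\mathcal{T}(\cdot),\mathcal{B},0)$ is well defined, and I would argue by contradiction, assuming $W_*$ is the only zero in $\overline{\mathcal{B}}$, so that $\deg(\mathcal{T}(\cdot),\mathcal{B},0)=\mathrm{index}(\mathcal{T}(\cdot),W_*)$.

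Next I would evaluate this index. Under \eqref{hypth} the roots $k_\pm$ are real and positive and $\mathcal{H}(k,d)<0$ precisely for $k\in(k_-,k_+)$. Condition \eqref{20} puts $k_-$ strictly between $k_{n_1}$ and $k_{n_1+1}$ and $k_+$ strictly between $k_{n_2}$ and $k_{n_2+1}$, so $\mathcal{H}(k_j,d)\neq0$ for all $j\geq1$ and $\mathcal{H}(k_j,d)<0$ exactly when $n_1+1\leq j\leq n_2$. The preceding Lemma then gives $\mathrm{index}(\mathcal{T}(\cdot),W_*)=(-1)^{\xi}$ with $\xi=\sum_{j=n_1+1}^{n_2}\dim E(k_j)$, which is odd by hypothesis; hence $\deg(\mathcal{T}(\cdot),\mathcal{B},0)=-1$.

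The decisive step is to compute the same degree independently and obtain $+1$. Since the non-existence Theorem \ref{prop6} is vacuous in the present Turing regime (there $d^{*}>1$), I would instead pin the degree through the large-diffusion limit: introduce the homotopy that multiplies the diffusion matrix $\mathcal{D}$ by a scalar $s\in[1,S]$, equivalently replaces the reaction $\mathcal{F}$ by $s^{-1}\mathcal{F}$, keeping the same operator form. The a priori bounds of Proposition \ref{prop5} persist (the reaction is only damped) uniformly for $s\in[1,S]$, so no zero of the deformed operator reaches $\partial\mathcal{B}$ and the degree is homotopy-invariant. For $S$ large the linearization at $W_*$ has $\mathcal{H}_S(k_j)=\det(k_j\mathcal{I}-S^{-1}\mathcal{A})\to k_j^{2}>0$ for every $j\geq1$, so $\xi=0$ and $\mathrm{index}=+1$; moreover a standard homogenization argument (diffusion dominating the damped reaction forces $u,v$ toward constants, hence toward the unique equilibrium $W_*$ inside $\mathcal{B}$) shows $W_*$ is the only zero at $s=S$. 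Therefore $\deg(\mathcal{T}(\cdot),\mathcal{B},0)=+1$, contradicting the value $-1$ found above; the uniqueness assumption fails and \eqref{pde} admits at least one positive non-constant steady state.

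I expect the main obstacle to lie in the third paragraph: securing a priori estimates that are simultaneously upper and positive-lower bounds and that hold uniformly along the entire homotopy $s\in[1,S]$ (so that homotopy invariance is legitimate), and proving the large-diffusion homogenization that collapses the degree to the single index $+1$. Because the energy-estimate non-existence result is unavailable in the pattern-forming range, this limit is the only clean route to the reference value of the degree, and making the convergence $u,v\to W_*$ uniform while keeping the solutions off $\partial\mathcal{B}$ is the delicate point.
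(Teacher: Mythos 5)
Your proposal is correct in its overall architecture and coincides with the paper's proof in all of the degree-theoretic machinery: both arguments compute $\mathrm{index}(\mathcal{T}(\cdot),W_*)=(-1)^{\xi}=-1$ from \eqref{hypth}, \eqref{20} and the Lemma, and both derive a contradiction by producing a second, independent evaluation of the degree equal to $+1$ via homotopy invariance on a box $\tilde M^{-1}<u,v<\tilde M$ supplied by Proposition \ref{prop5}. Where you genuinely diverge is in the choice of reference problem. The paper homotopes only the predator diffusion coefficient, $\tau d+(1-\tau)\bar d$, down to a value $\bar d<d$ at which Theorem \ref{prop6} guarantees no non-constant solutions and $\mathcal{H}(k_j,\bar d)>0$ for all $j$; your remark that Theorem \ref{prop6} is ``unavailable'' is therefore a slight misreading --- it is never applied at the actual $d$, only at the auxiliary $\bar d$. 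You instead homotope toward the large-diffusion limit $s^{-1}\mathcal{F}$, $s\in[1,S]$, and recover both the index $+1$ (since $\mathcal{H}_S(k_j)\to k_j^2>0$ for $j\ge1$ and $\mathcal{H}_S(k_0)=S^{-2}d^{-1}\mathrm{D}>0$) and uniqueness of $W_*$ by homogenization; this is self-contained and in fact cleaner, because the same Poincar\'e energy estimate underlying Theorem \ref{prop6} applies with Lipschitz constants scaled by $s^{-1}$, so non-existence at $s=S$ large is immediate without worrying whether a suitable $\bar d$ in $(d^*,1)$ exists. Both routes share the same unaddressed technical point, which you correctly flag: Proposition \ref{prop5} yields strict positivity and upper bounds but not a uniform positive lower bound along the whole homotopy, which is needed to keep zeros off $\partial\mathbb{B}(\tilde M)$ (the paper simply asserts this); a Harnack-type estimate uniform in the homotopy parameter would be required to close it in either version.
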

\begin{proof}
We prove this theorem using topological degree with homotopy invariance. Theorem \ref{prop6} implies the existence of $\bar d<d$ such that
\begin{enumerate}
    \item[(i)]  system  (\ref{PSS}) with diffusion coefficient  $\bar d$ has no non-constant solutions,
    \item[(ii)] $\mathcal{H}(k_j,\bar d)>0$ for all $j\geq 0.$
\end{enumerate}
For $ \tau\in [0, 1]$, we define $$\overline{\mathcal{F}}(\tau,W)=\left(F_1,
\big(\tau d+(1-\tau)\bar d)^{-1}\big)F_2\right)^T$$
 and consider the problem 
\begin{equation}
    \begin{cases}
-\nabla^2 W=\overline{ \mathcal{F}}(\tau,W) &x\in \Omega, \\
 \frac{\partial W}{\partial n}=0 &x\in \partial  \Omega.
 \end{cases}
 \label{PSSS1}
\end{equation}
Then the system (\ref{PSSS1}) has positive solution $W$ if and only if 
$$\overline{ \mathcal{T}}(\tau,W)=W-(\mathcal{I}-\nabla^2)^{-1}\big(\overline{\mathcal{F}}(\tau,W)+W\big)=0 \text{ for } W \in \mathcal{W}^+.$$
A straightforward calculation yields $$D_W\overline{ \mathcal{T}}(\tau,W_*)= \mathcal{I}-(\mathcal{I}-\nabla)^{-1}(\mathcal{I}+\overline{ \mathcal{A}}),$$ where $$\overline{ \mathcal{A}}=\begin{pmatrix} a_{10} & a_{01}\\(\tau d+(1-\tau)\bar d)^{-1})b_{10}& (\tau d+(1-\tau)\bar d)^{-1})b_{01}\end{pmatrix}.$$
Thus, $W$ is a positive non-constant solution of (\ref{PSS}) if and only if it is a solution of (\ref{PSSS1}) when $\tau=1.$

Observe that $\overline{\mathcal{F}}(1,W)=\mathcal{F}(W), \text{ and } D_W\overline{ \mathcal{T}}(1,W_*)=D_W{ \mathcal{T}}(W_*).$ Now from (\ref{20}), we find 
\begin{equation}
    \begin{cases}
\mathcal{H}(k_j,d)<0 & \text{ for }n_1+1\leq j\leq n_2, \\
\mathcal{H}(k_j,d)>0 & \text{ otherwise}.
 \end{cases}
 \label{PSSS2}
\end{equation}
Further, using Lemma 1, we get $$\xi=\sum_{j\geq0,\;{H}(k_j,d)<0}\text{dim}E(k_j)=\sum_{j=n_1+1}^{n_2}\text{dim}E(k_j),$$
which is an odd number.  Therefore
$$\text{index}(\overline{\mathcal{T}}(1,W_*)=(-1)^\xi=-1.$$

Using (ii),  we obtain 
$$\text{index}(\overline{\mathcal{T}}(0,W_*)=(-1)^0=1.$$

Using proposition \ref{prop5}, 
we observe the  existence $\tilde M>0$ such that  $\tilde M ^{-1}<u,v<\tilde M$ for all $\tau\in [0,1]$ and thus $\overline{\mathcal{T}}(\tau,W)\neq0.$
Consider the set $\mathbb{B}(\tilde M)$ defined by $$\mathbb{B}(\tilde M)=\left\{W\in \mathcal{W}\;:\tilde M ^{-1}<u,v<\tilde M\right\}.$$
If the system has unique solution $W_*$ in $\mathbb{B}(\tilde M),$ then application of the homotopy invariance of the topological degree gives
\begin{equation}\label{deg}
    \text{deg}\left(\overline{\mathcal{T}}(0,\cdot),0,\mathbb{B}(\tilde M)\right)=\text{deg}\left(\overline{\mathcal{T}}(1,\cdot),0,\mathbb{B}(\tilde M)\right).
\end{equation}
Also, both the equations $\overline{\mathcal{T}}(0,W)=0$ and $\overline{\mathcal{T}}(1,W)=0$ have unique positive solution $W_*$ in $\mathbb{B}(\tilde M)$. Now,
$$\text{deg}\left(\overline{\mathcal{T}}(0,\cdot),0,\mathbb{B}(\tilde M)\right)=\text{index}(\overline{\mathcal{T}}(0,W_*)=(-1)^0=1$$ and 
 $$\text{deg}\left(\overline{\mathcal{T}}(1,\cdot),0,\mathbb{B}(\tilde M)\right)=\text{index}(\overline{\mathcal{T}}(1,W_*)=(-1)^\xi=-1.$$
 This contradicts (\ref{deg}) 
 and therefore, $\overline{\mathcal{T}}(1,W)$ has at least one positive solution other than $W_*$. Since the system (\ref{pde}) has unique homogeneous steady state $W_*$, the system  (\ref{pde}) has at least one positive non-constant steady-sate solution in  $\mathbb{B}(\tilde M)$.  
\end{proof}
\section{Continuation of stationary solution}\label{SDNC}


In the previous section, we have discussed the existence and the non-existence of non-constant solutions. Now, we are interested to find  the origin of various non-constant solutions that include mode-dependent Turing and localized solutions and their continuation. The stationary solution of the system \eqref{pde} can be analyzed by converting the system (\ref{PSS}) into a system of four first-order ordinary differential equations:
$$ 
\frac{du}{dx}=w,\;\frac{dv}{dx}=z,\; \frac{dw}{dx}=-F_1(u,v),\;\frac{dz}{dx}=-\frac{1}{d}F_2(u,v).
$$
The corresponding linearized system at $(u_*,v_*,0,0)$ can be written as
\begin{equation*}
    \frac{dZ}{dx}=BZ, \label{lins}
\end{equation*}
where $$Z=\begin{pmatrix}u\\v\\w\\z \end{pmatrix}\text{ and }B=\begin{bmatrix}
0& 0 &1& 0\\0& 0& 0& 1\\-a_{10}&-a_{01}&0&0\\-b_{10}/d&-b_{01}/d&0&0
\end{bmatrix}.$$
The characteristic equation of the matrix $B$ is given by 
\begin{equation}\label{nonconsev}
    H(\lambda^2):=d\lambda^4+(da_{10}+b_{01})\lambda^2+\mathrm{D}=0.
\end{equation}
$H(\lambda^2)$ is a quadratic polynomial in $\lambda^2.$  The zeros of \eqref{nonconsev} are always symmetric about real axis and imaginary axis in the complex plane. At the Turing bifurcation and BD transition thresholds, (\ref{nonconsev}) has a pair of eigenvalues with multiplicity two. These occur when
\begin{equation}\label{tureqn}
\mathrm{D}=\frac{(da_{10}+b_{01})^2}{4d}
\end{equation}
and $H(\lambda^2)$ reduces to
$$H(\lambda^2)=(\lambda^2-K)^2, \text{ where } K=\frac{da_{10}+b_{01}}{2d}.$$
Note that a Turing bifurcation corresponds to $K<0$, and a BD transition corresponds to $K>0$ \cite{champneys1998homoclinic,belyakov1997abundance,al2021unified}. The threshold values of Turing bifurcation and BD transition satisfy \eqref{tureqn}. Now, we verify these results with numerical  simulation and numerical  continuation.

We keep the parameter values $\eta=0.1,$ $\alpha=0.07,$ $\beta=0.2$ and $\gamma=1.2$ fixed in all the calculations. We consider $\sigma$ as a bifurcation parameter. The corresponding Turing bifurcation threshold and BD transition threshold are  $\sigma_T=1.861$ and  $\sigma_{BD}=2.098$ respectively. 
Both the bifurcation thresholds   are marked in  Fig. \ref{ev}(b)  along with $\vert\vert u\vert\vert_2$ of the homogeneous steady state, where   
$$\vert\vert u\vert\vert_2=\sqrt{ \int_{\Omega} (u^2)dx}.$$
Various localized structured solutions, which are different from Turing solution, emerge from the BD transition point. 

\begin{figure}[!t]
\begin{subfigure}[b]{.45\textwidth}
  \centering
\includegraphics[scale=0.5]{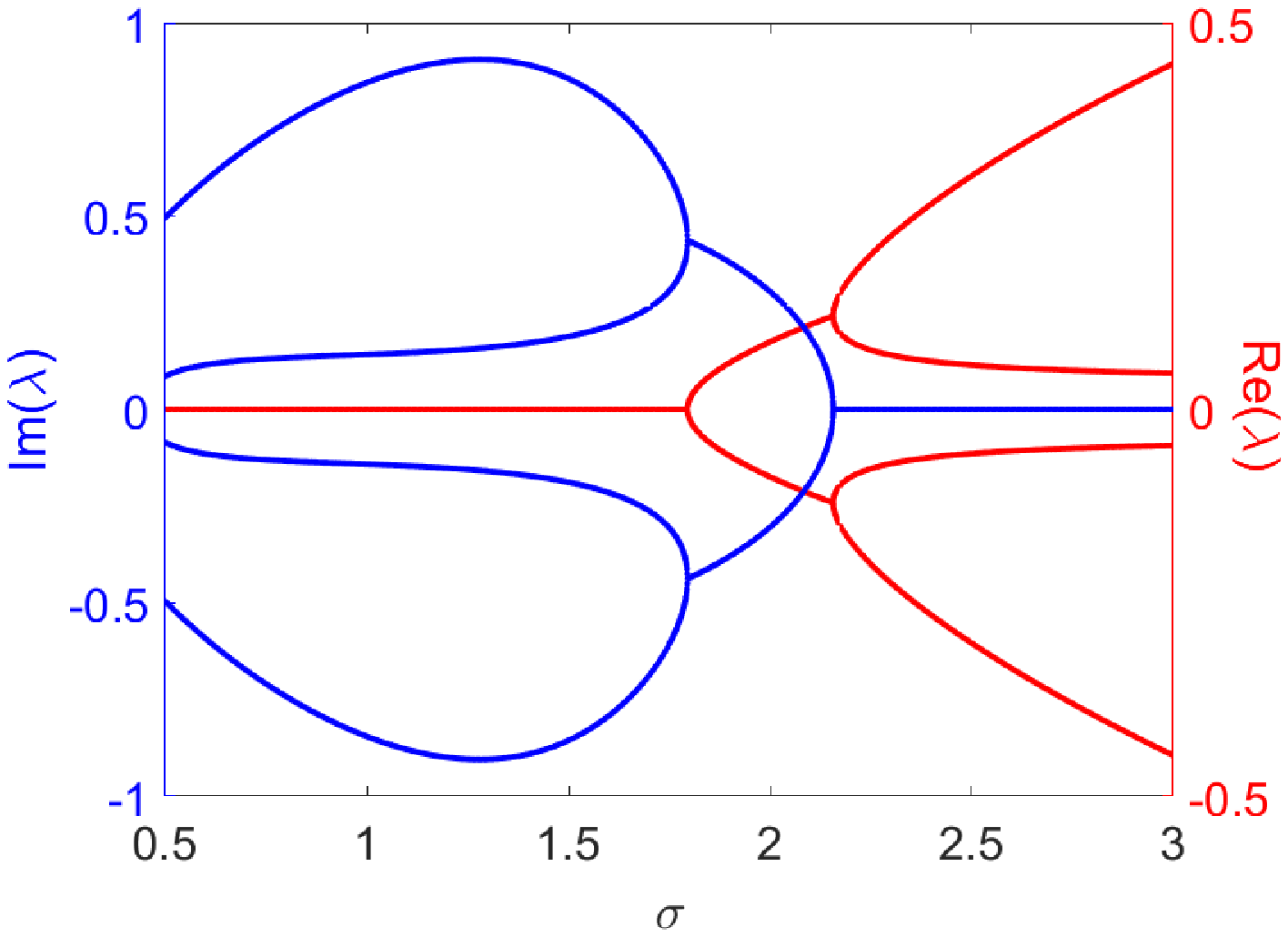}\\ 
\caption{}
\end{subfigure}
\hspace*{0.5cm}
\begin{subfigure}[b]{.45\textwidth}
  \centering
\includegraphics[scale=0.5]{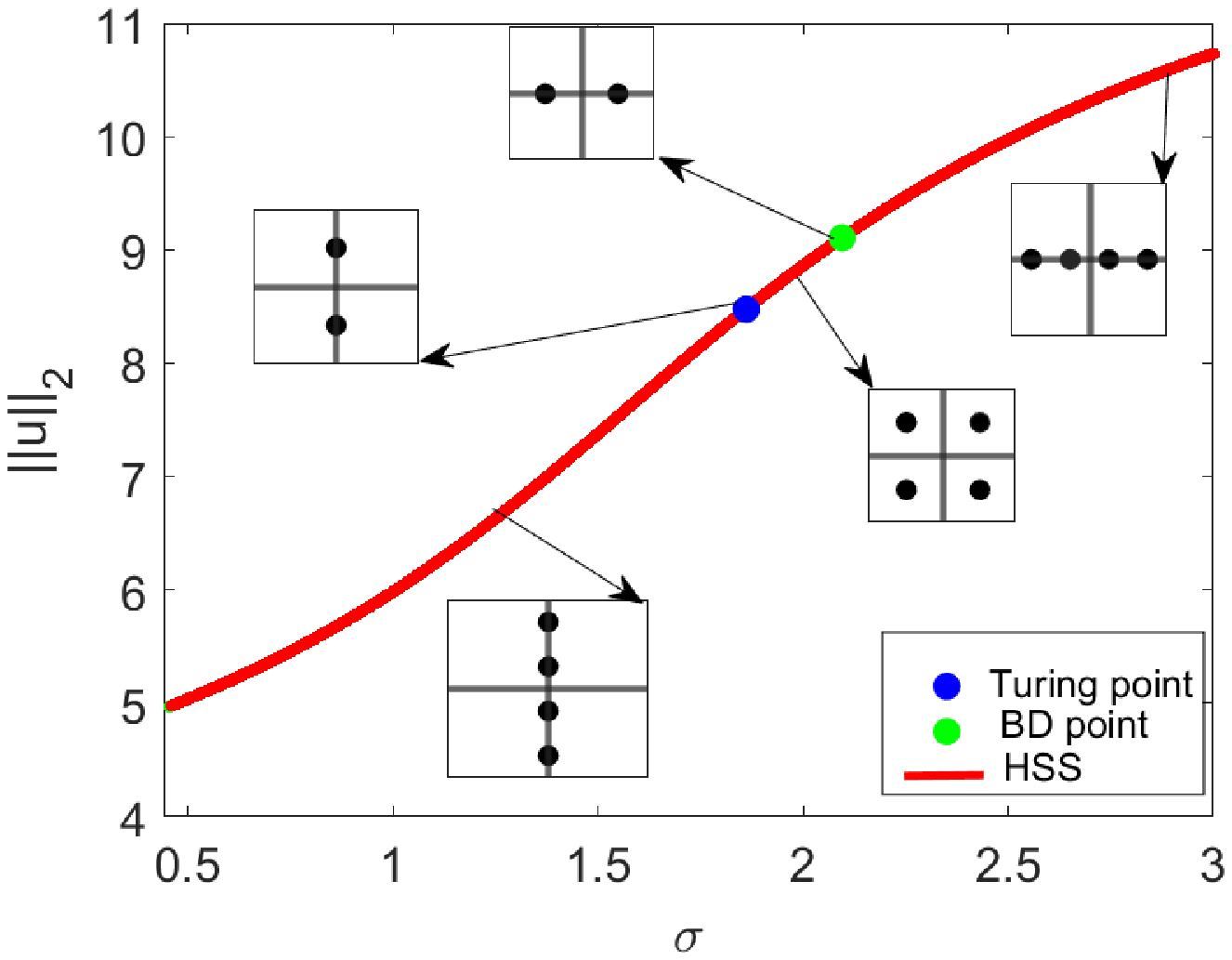}\\ 
\caption{}
\end{subfigure}
\caption{(a) Real and imaginary part of eigenvalues of the characteristic equation  (\ref{nonconsev}). (b) Locations of the eigenvalues along with coexisting homogeneous steady state (HSS) branch. Other parameter values are $\alpha=0.07,\beta=0.2, \gamma=1.2, \eta=0.1$ and $d=46$.}
\label{ev}
\end{figure}

\begin{figure}[!ht]
\begin{subfigure}[b]{.9\textwidth}
  \centering
  \centering
 \includegraphics[width=10cm,height=8cm]{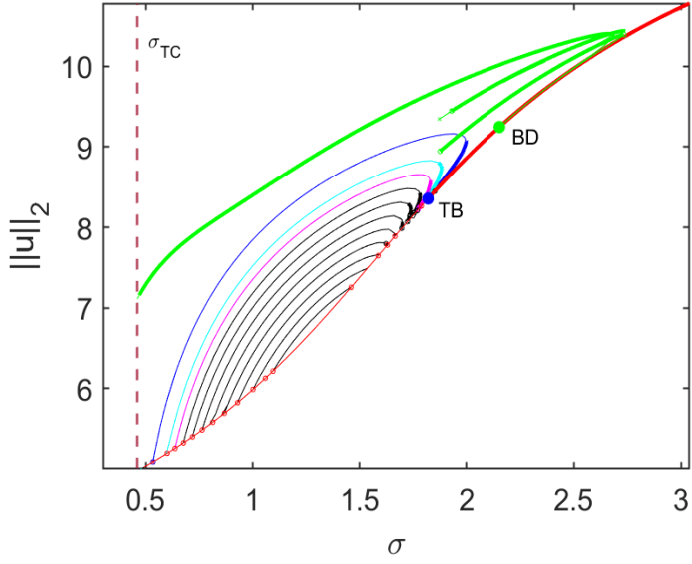}\\ 
\caption{}
\end{subfigure}\\
\begin{subfigure}[b]{.48\textwidth}
   \centering
\includegraphics[width=9cm,height=7cm]{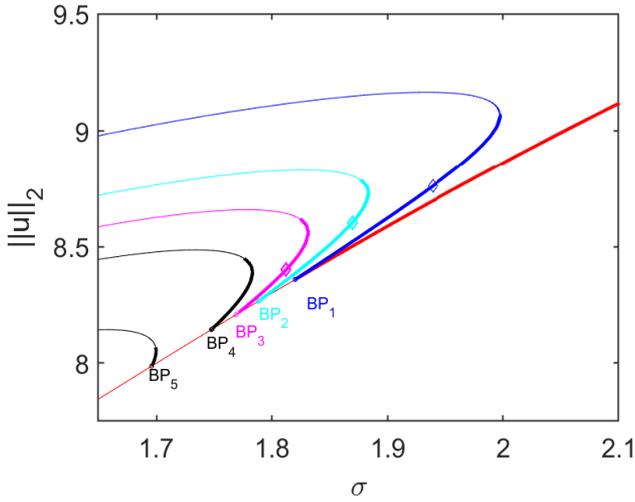}\\ 
\caption{}
\end{subfigure}
\begin{subfigure}[b]{.48\textwidth}
   \centering
\includegraphics[width=8cm,height=6cm]{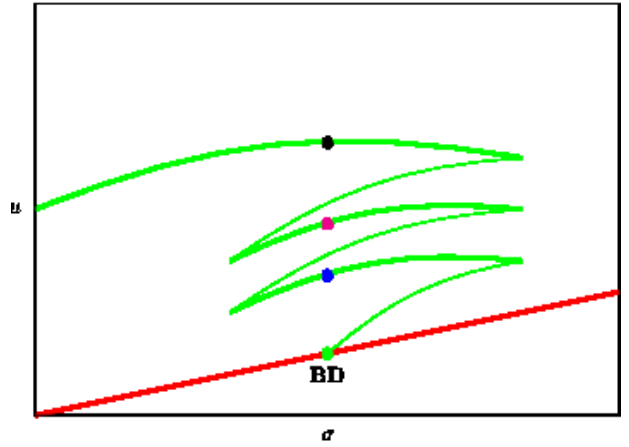}\\ 
\caption{}
\end{subfigure}

\caption{ (a) Bifurcation diagram of stationary Turing  patterns and localized patterns. Here  red color curve represents the homogeneous branch 
and  green color curves represent localized patterns. Other color curves represent Turing stationary branches of different modes. Solid color curves represent the stable branches and light color curves represent unstable branches. (b) A zoom version of the bifurcation diagram for Turing mode solutions. (c) A schematic bifurcation diagram of localized pattern solutions.  Other parameter values are $\eta=0.1,\alpha=0.07,\beta=0.2, \gamma=1.2$ and $d=46.$}  
\label{tur}
\end{figure}

To track a stationary solution against a bifurcation parameter, we use the Matlab software package pde2path \cite{uecker2014pde2path}. We consider the spatial-domain size $L=200$, and diffusion parameter $d=46$. Numerical continuation of Turing mode solutions and localized structure solutions are plotted in Fig. \ref{tur}(a). Different modes of Turing solutions  emerge from different  points (BP) [see Fig. \ref{tur}(b)]. The BP points can be found by solving for $\sigma$ (implicitly) in the following equation 
\begin{equation}
  d\left(\frac{n\pi}{L}\right)^4+(da_{10}+b_{01})\left(\frac{n\pi}{L}\right)^2+\mathrm{D}=0,
\end{equation}
where $n$ is the corresponding unstable Turing mode. For example, BP$_1$, BP$_2$, and BP$_3$ correspond to $n=19,20$ and $21$ respectively. We have shown 19-, 20- and 30- mode Turing solutions and localized solutions  in the Appendix \ref{Ap2}.

In Fig. \ref{tur}, we observe existence of multiple branches of various Turing mode solutions and localized solutions. The parameter value $\sigma=1.86$  satisfies Turing instability criteria. However, the system \eqref{pde} does not produce a stable Turing solution when a numerical simulation is performed with small amplitude spatial heterogeneity around the coexisting steady state $E_*$ as an initial condition.  The evolution of numerical solution shows a long oscillatory transition between a localized solution and Turing solution which finally settles down into a steady localized solution (see Fig. \ref{tran}). Thus we conclude that the localized solution dominates the Turing solution.
\begin{figure}[!ht]
 \begin{subfigure}[b]{.32\textwidth}
 \centering
\includegraphics[scale=0.38]{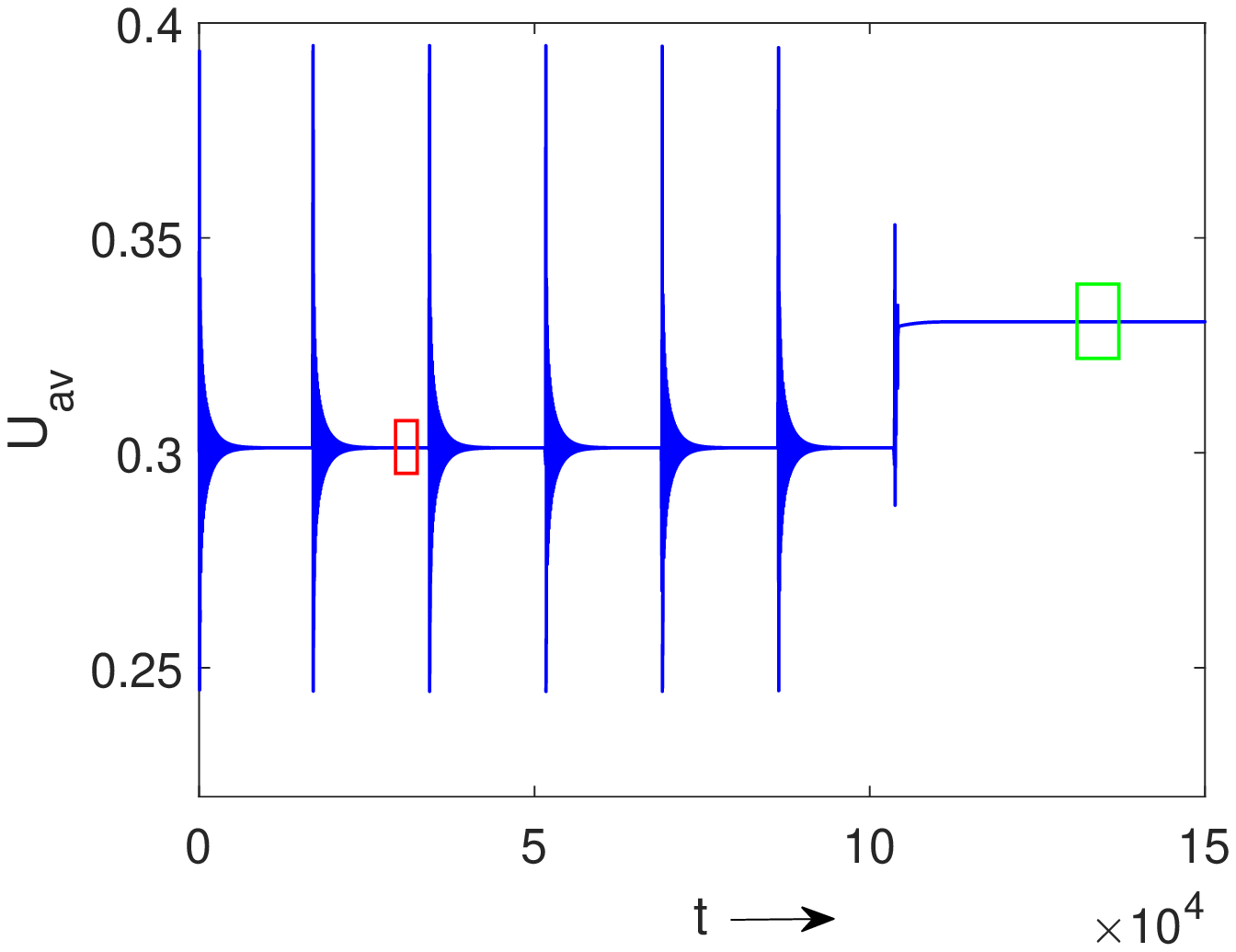}\\ 
 \caption{}
  \end{subfigure}
  \begin{subfigure}[b]{.32\textwidth}
 \centering
\includegraphics[scale=0.38]{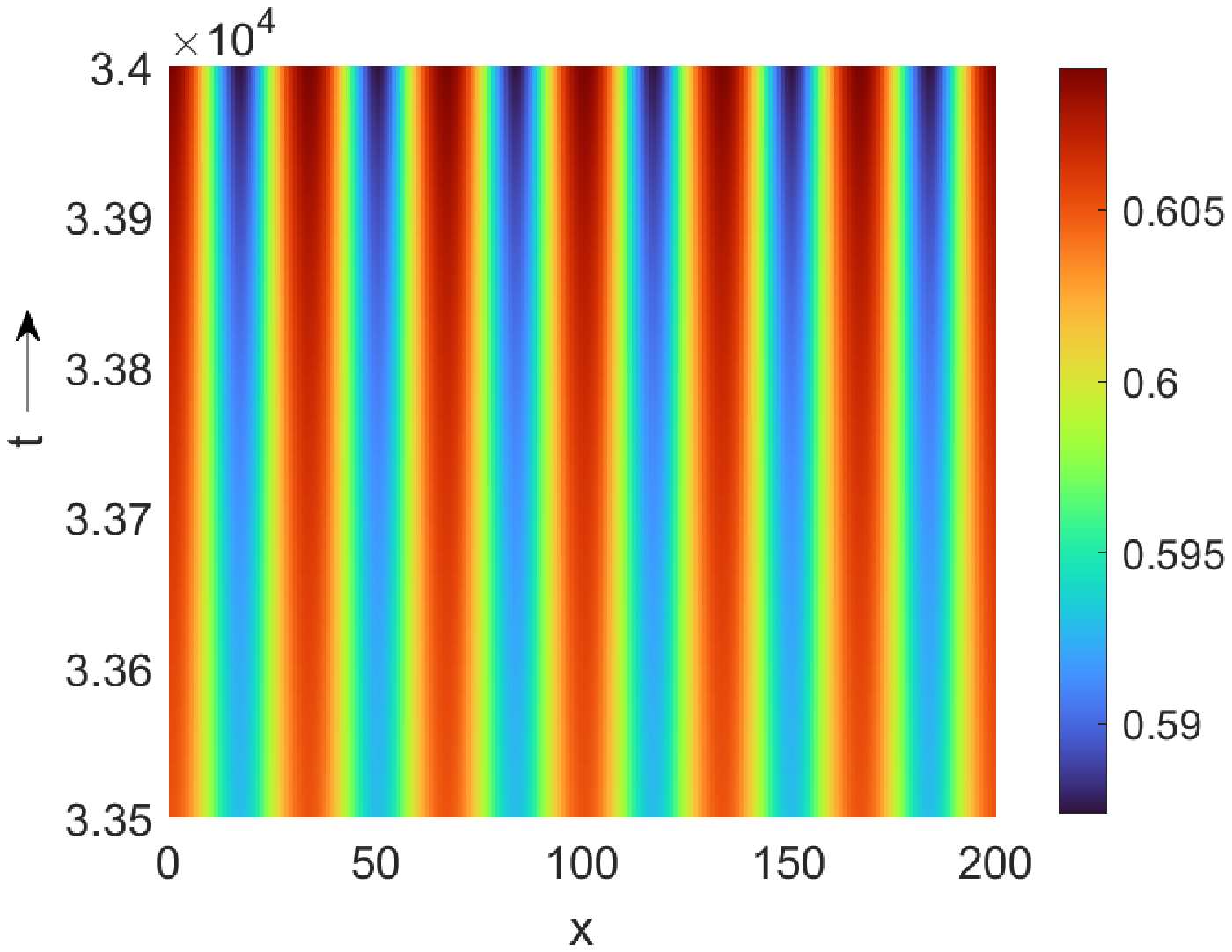}\\ 
 \caption{}
  \end{subfigure} 
 \begin{subfigure}[b]{.32\textwidth}
 \centering
\includegraphics[scale=0.38]{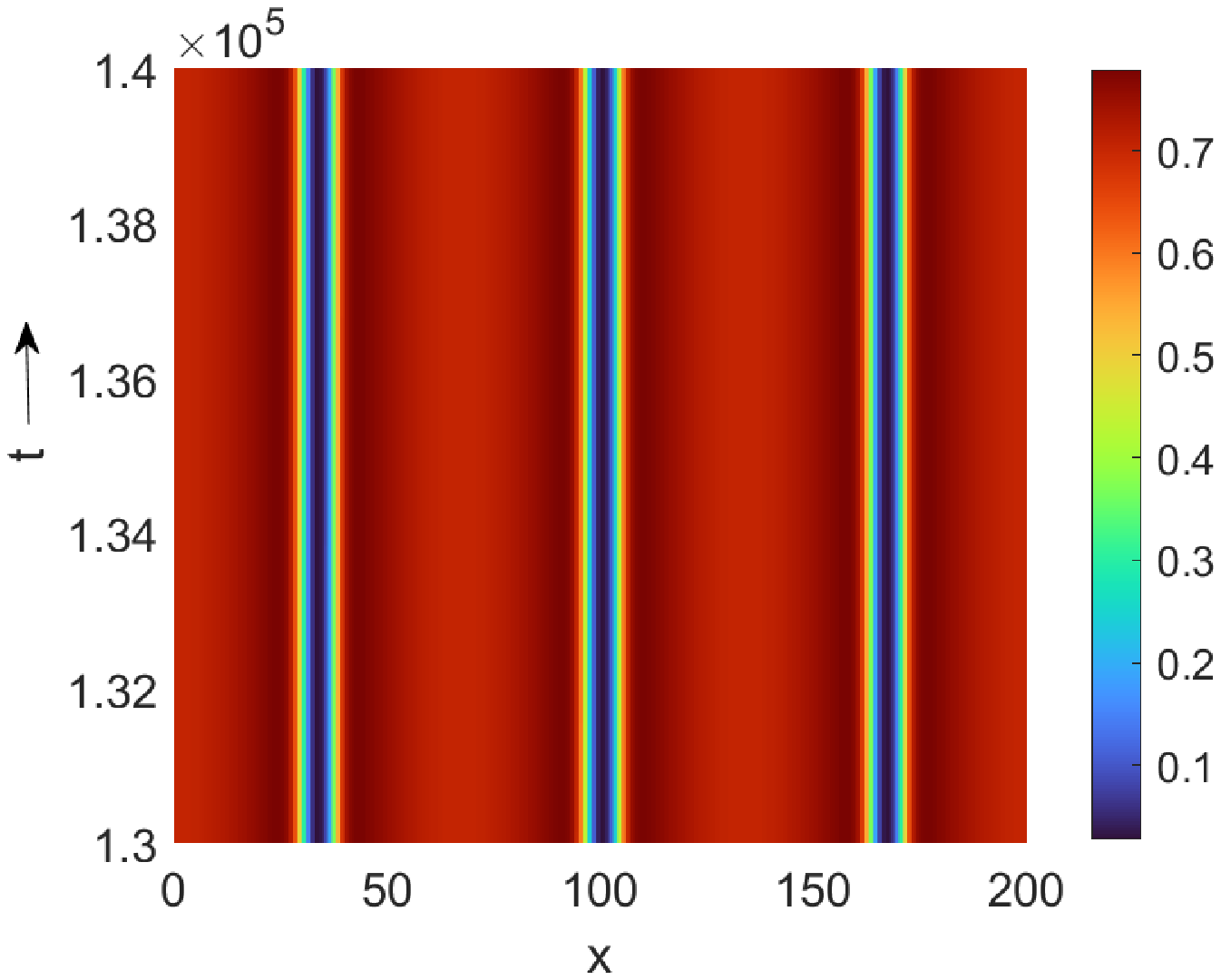}\\ 
 \caption{}
  \end{subfigure}
 \caption{Evolution of solution $u$: (a) spatial average of $u$ against time, (b) space-time plot corresponding to the red rectangle [see (a)] of the time-series plot, (c) space-time plot corresponding to the green rectangle [see (a)] of the time-series plot. Here,  parameter values are $\alpha=0.07,\beta=0.2, \gamma=1.2,\eta=0.1,\sigma=1.86$ and $d=46$.} 
\label{tran}
\end{figure}

\section{Time varying solutions}\label{dynamic}
  Here, we study some time-dependent solutions, namely, travelling wave, spatio-temporal chaos and  moving pulse solutions.
\subsection{Travelling wave solution}\label{TRWS}
Heterogeneous stationary solutions are obtained with small amplitude heterogeneous  perturbation around the unstable coexisting homogeneous steady-states. However, the introduction of predator over a small domain with the rest filled with prey population reveals invasive spread of the predators. We have observed that the coexisting homogeneous steady-state $E_*(u_*,v_*)$  may be stable or unstable and  the predator free homogeneous steady-state $E_1(u_1,0)$ is always saddle. Consider the case in which $E_*$ is a stable homogeneous steady-state.  Here, we investigate the  stable and unstable regions of the travelling wave solution  $$
u(x,t)=U(\xi)\quad\mbox{and}\quad v(x,t)=V(\xi)
$$ 
connecting the steady states $E_1$ and $E_*,$ where $\xi=x+ct$ and $c$ is the wave speed. Now, $\big(U(\xi),V(\xi)\big)$ satisfy 
\begin{subequations}
\begin{equation}
 c \frac{d U}{d \xi} =\frac{d^2 U}{d \xi^2}+F_1(U,V),
\end{equation}
\begin{equation}
c \frac{d V}{d \xi}=d\frac{d^2 V}{d \xi^2} +F_2(U,V),\end{equation}
\label{travel}
\end{subequations}
along with 
$$
\big(U(-\infty),V(-\infty)\big)=E_1 \text{ and } \big(U(\infty),V(\infty)\big)=E_*.
$$

In terms new variables defined by \cite{huang2016geometric}
\begin{alignat*}{4}
&X(t)\,=\,U(ct),\,\,Y(t)\,=\,\frac{1}{c}\left(cU(ct)-U'(ct)\right),\,\,\\
&W(t)\,=\,V(ct),\,\,
Z(t)\,=\,\frac{1}{c}\left( cV(ct)-dV'(ct)\right),
\end{alignat*}
the system (\ref{travel}) transforms into a system four first order ordinary differential equations: 
\begin{equation}
X'=c^2(X-Y),\;
Y'=F_1(X,W),\;
W'=\frac{c^2}{d}(W-Z)),\;
Z'=F_2(X,W).
\label{TRA}
\end{equation}
\color{black}
    The steady states of the system (\ref{TRA}) corresponding to $E_1$ and $E_*$ are $\mathcal{E}_1(u_1,u_1,0,0)$ and $\mathcal{E}_*(u_*,u_*,v_*,v_*).$ Now, we construct a wedge-shaped region in $\mathbb{R}^4$  such that  $\mathcal{E}_*$ lies in the interior of this region, and look for a solution connecting the steady states $\mathcal{E}_1$ and $\mathcal{E}_*$ confined within this region. If $0<X<u_1$ and $W>0,$ then
$$-W<Z'=F_2(X,W)<(\gamma-1)W.
$$
Using the above inequality, our first task is to compare the vector field of the system (\ref{TRA}) with the following planner system 
\begin{equation} \label{pl1}
    W'=\frac{c^2}{d}(W-Z)) \text{ and } Z'=-W.
\end{equation} 

\begin{prop}
The system (\ref{pl1}) has a strictly monotone decreasing solution $W_1(t),Z_1(t)$ for $t\in (-\infty,\infty)$ which satisfy  $Z_1(t)>W_1(t)>0 $ and 
$$W_1(t)\rightarrow0, Z_1(t)\rightarrow0 \text{ as }t \rightarrow \infty,$$
$$W_1(t)\rightarrow \infty, Z_1(t)\rightarrow\infty \text{ as }t \rightarrow -\infty. $$
\end{prop}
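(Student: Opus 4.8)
The plan is to treat (\ref{pl1}) as a linear, autonomous planar system $\frac{d}{dt}(W,Z)^T = A\,(W,Z)^T$ with
$$A=\begin{pmatrix} c^2/d & -c^2/d \\ -1 & 0 \end{pmatrix},$$
and to realize the required trajectory as a suitably scaled orbit on the stable manifold of the origin. First I would record $\mathrm{trace}(A)=c^2/d>0$ and $\det(A)=-c^2/d<0$ (using $c\neq0$, $d>0$). Since the determinant is negative the origin is a saddle, so $A$ has two real eigenvalues $\lambda_+>0>\lambda_-$, namely the roots of the characteristic polynomial $p(\lambda)=\lambda^2-(c^2/d)\lambda-c^2/d$.

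The crux of the argument is pinning down the sign structure of the eigenvector attached to $\lambda_-$, and for that I would locate $\lambda_-$ more sharply. Evaluating $p(0)=-c^2/d<0$ and $p(-1)=1+c^2/d-c^2/d=1>0$ shows that the negative root satisfies $\lambda_-\in(-1,0)$. Solving $(A-\lambda_- I)v=0$, the second row gives $-v_1-\lambda_- v_2=0$, so an eigenvector is $v=(-\lambda_-,1)$; because $-\lambda_-\in(0,1)$, both components of $v$ are positive and the second strictly exceeds the first.

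I would then define the candidate solution along this eigendirection,
$$\big(W_1(t),Z_1(t)\big)=e^{\lambda_- t}\,(-\lambda_-,1)=\big(-\lambda_- e^{\lambda_- t},\,e^{\lambda_- t}\big),$$
and verify each claimed property directly. Positivity of $W_1$ and $Z_1$ is immediate from $-\lambda_->0$ and positivity of the exponential; strict monotone decrease follows from $W_1'=-\lambda_-^2 e^{\lambda_- t}<0$ and $Z_1'=\lambda_- e^{\lambda_- t}<0$; the inequality $Z_1>W_1$ reduces to $1>-\lambda_-$, i.e.\ the bound $\lambda_->-1$ obtained above, so $Z_1-W_1=(1+\lambda_-)e^{\lambda_- t}>0$. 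Finally, since $\lambda_-<0$, the factor $e^{\lambda_- t}$ forces $W_1,Z_1\to0$ as $t\to\infty$ and $W_1,Z_1\to\infty$ as $t\to-\infty$, giving the stated asymptotics.

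I expect the only genuine subtlety to be the inequality $Z_1(t)>W_1(t)$: all the remaining properties are automatic once the saddle structure is identified, but $Z>W$ rests on the sharper bound $\lambda_->-1$ rather than merely $\lambda_-<0$. It is precisely this bound—coming from $p(-1)>0$, which in turn uses the specific coefficients of (\ref{pl1})—that makes the orbit fit inside the region $\{Z>W>0\}$ needed for the later comparison with the vector field of (\ref{TRA}).
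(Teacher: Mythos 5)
Your proof is correct and rests on the same key fact as the paper's: the negative eigenvalue of the saddle at the origin has its eigenvector lying strictly inside $\{Z>W>0\}$ (your bound $-\lambda_-\in(0,1)$, obtained from $p(-1)=1>0$, is exactly the paper's statement that the eigenvector $(1,m)$ has slope $m=-1/\lambda_->1$). Writing the orbit explicitly as $e^{\lambda_- t}(-\lambda_-,1)$ lets you verify positivity, monotonicity, the ordering $Z_1>W_1$, and both asymptotic limits by direct computation, which slightly streamlines the paper's version, where the backward-time divergence is instead deduced from a trapping-region argument on the triangle $S=\{Z>W\geq 0\}$.
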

\begin{proof}
The linear system (\ref{pl1}) possesses a negative eigenvalue $$\lambda_1=\frac{c^2-\sqrt{c^4+4dc^2}}{2d}$$
with an eigenvector $$v_1=(1,m)^T,\text{ where }m=\frac{c^2+\sqrt{c^4+4dc^2}}{2c^2}>1.$$
Thus, the system (\ref{pl1}) has a local one-dimensional stable manifold in the triangular region $$S=\big\{(W,Z):Z>W\geq0\big\}\cup(0,0).$$
If a solution $(W_1(t),Z_1(t))$ starts from a point $(W_0,Z_0)\in S,$ then both 
$W_1(t), \;Z_1(t)\rightarrow0 \text{ as }t \rightarrow \infty.$ 

 Further, we can easily observe that the vector field of the system (\ref{TRA}) on the side $\{Z=W\}$ of $S$ points vertically downward and the vector field in the side  $W=0$ points to the left horizontally.  Hence, the solution $\big(W_1(t),Z_1(t)\big)$ can be extended for all $t<0$ and $\big(W_1(t),Z_1(t)\big)$ remains in the triangular region $S$ for all $t<0$. Since $W_1'(t)=\frac{c^2}{d}(W_1-Z_1)<0$ and $Z_1'(t)<0$, we conclude that both  $W_1(t), Z_1(t)\rightarrow\infty \text{ as }t \rightarrow -\infty. $
\end{proof}
Since, $W_1(t)$ is a strictly decreasing function, $W_1$ has an inverse $W_1^{-1}:(0,\infty):\rightarrow \mathbb{R}$ such that $W_1(t)=W$ if and only if $t=W_1^{-1}(W)$ for all $W>0.$ Therefore, we can express the stable manifold of the system (\ref{pl1}) inside $S$ as the graph of the function $\zeta:[0,\infty)\rightarrow [0,\infty)$  by $$\zeta(W)=Z_1\big(W_1^{-1}(W)\big) \text{ for } W>0\; \text{ with } \zeta(0)=0.$$

Now we define a wedge-shaped region $\Sigma\in \mathbb{R}^4$  \cite{huang2016geometric} as follows: 
$$\Sigma=\left\{ (X,Y,W,Z):0\leq X\leq u_1,Y\in\mathbb{R},W\geq0, \frac{1}{2}W\leq Z\leq \zeta(W)\right\}.$$ 
The boundary of $\Sigma$ consists of surfaces $B_1-B_4$ and $C_3-C_5$ defined by 
\begin{alignat*}{4}  
 B_1&=\left\{ 0<X<u_1,Y\in\mathbb{R},W>0, Z=\zeta(W) \right\}, \\
 B_2&=\left\{ 0<X<u_1,Y\in\mathbb{R},W>0, Z=\frac{1}{2}W\right\}, \\
 B_3&=\left\{ X=u_1,Y<u_1,0< \frac{1}{2}W\leq Z\leq \zeta(W) \right\}, \\
 B_4&=\left\{ X=0,0<Y,0<\frac{1}{2}W\leq Z\leq \zeta(W)\right\}, \\
 C_3&=\left\{ X=u_1,Y\geq u_1,0< \frac{1}{2}W\leq Z\leq \zeta(W) \right\}, \\
 C_4&=\left\{ X=0,0\leq Y,0<\frac{1}{2}W\leq Z\leq \zeta(W)\right\}, \\
 C_5&=\left\{ 0\leq X\leq u_1,W=Z=0\right\}. 
\end{alignat*}
Let $\Phi(t,\textbf{p})=(X(t),Y(t),W(t),Z(t))$ with $\Phi(0,\textbf{p})=\textbf{p}\in\mathbb{R}^4$ be the flow of the system (\ref{TRA}). If $M={\frac{1}{d}\left(\frac {  \gamma u_1  }{\alpha+u_1}-1\right)},$ then the flow  $\Phi(t,\textbf{p})$ satisfies the following proposition \cite{huang2016geometric}: 
\begin{prop}
\begin{itemize}
    \item[(i)] If $c\geq 2d\sqrt{M},$  then the system (\ref{TRA}) has a positive bounded travelling wave $\Phi(t,\textbf{p})=(X(t),Y(t),W(t),Z(t))\in \Sigma$ with $\Phi(t,\textbf{p})\rightarrow \mathcal{E}_1 $ as $t\rightarrow-\infty$ and $\Phi(t,\textbf{p})\rightarrow \mathcal{E}_* $ as $t\rightarrow+\infty$.
    \item[(ii)] If $0<c<2d\sqrt{M}$, then the system (\ref{TRA}) does not have a non-negative, nontrivial travelling wave solution connected to $\mathcal{E}_1$.
\end{itemize}
\end{prop}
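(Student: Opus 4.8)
The plan is to extract the whole dichotomy from the linearization of \eqref{TRA} at $\mathcal{E}_1=(u_1,u_1,0,0)$, so I would compute that Jacobian first. Because $\partial_u F_2$ vanishes on $\{W=0\}$, the matrix is block triangular: the $(X,Y)$ block has determinant $c^2\,\partial_u F_1(u_1,0)=c^2\sigma u_1(1-2u_1)<0$ (using $\eta=\sigma u_1(1-u_1)$ and $u_1>\tfrac12$), so $\mathcal{E}_1$ is a genuine saddle in those coordinates, while the $(W,Z)$ block is $\left(\begin{smallmatrix} c^2/d & -c^2/d \\ dM & 0\end{smallmatrix}\right)$ with characteristic polynomial $\lambda^2-(c^2/d)\lambda+c^2M$. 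Both roots have positive real part (trace and product are positive, and $M>0$ since $\gamma u_1/(\alpha+u_1)>1$), and they are \emph{real} exactly when $c\ge 2d\sqrt M$ and \emph{complex} when $0<c<2d\sqrt M$. This single observation is the hinge between the two parts, so I would isolate it before anything else.

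For the non-existence statement (ii), any non-negative nontrivial wave with $\Phi\to\mathcal{E}_1$ as $t\to-\infty$ must lie on the unstable manifold of $\mathcal{E}_1$. If its predator components $(W,Z)$ are not identically zero, then as $t\to-\infty$ they are governed by the complex eigenvalue pair, so $W(t)$ behaves like $e^{(c^2/2d)t}$ times an oscillatory factor and therefore changes sign for arbitrarily large negative $t$, contradicting $W\ge0$. If instead $(W,Z)\equiv(0,0)$, the orbit stays in the invariant predator-free plane $\{W=Z=0\}$ and cannot reach $\mathcal{E}_*$, whose predator coordinate $v_*$ is positive. Hence no admissible wave exists for $0<c<2d\sqrt M$.

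For existence (i), I would run a Ważewski-type argument (invariant wedge plus topological degree of the unstable manifold) on $\Sigma$. First I would classify the boundary: the faces $\{X=u_1\}$ and $\{X=0\}$ split into immediate-exit and ingress portions according to the sign of $X'=c^2(X-Y)$ (this is the $B_3/C_3$ and $B_4/C_4$ split), the faces $\{Z=\zeta(W)\}$ and $\{Z=\tfrac12 W\}$ are controlled by the comparison $-W<F_2(X,W)<(\gamma-1)W$ and the planar system \eqref{pl1}, which traps the $(W,Z)$ orbit between the two comparison flows and keeps $W\ge0$, and $\{W=Z=0\}$ is the invariant edge. Since $c\ge 2d\sqrt M$ makes the $(W,Z)$ eigenvalues real, the relevant branch of the unstable manifold of $\mathcal{E}_1$ leaves monotonically into the interior of $\Sigma$ with $W>0$ (no spiralling). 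A connectedness/Ważewski argument then yields at least one orbit that never meets the immediate exit set and so remains in $\Sigma$ for all forward time; being bounded and forward-trapped in a wedge whose only interior equilibrium is $\mathcal{E}_*$, its $\omega$-limit set must be $\mathcal{E}_*$, which is the asserted heteroclinic connection.

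The hard part will be the existence direction, and within it two steps deserve care. The first is the sign of the flow on the implicitly defined face $\{Z=\zeta(W)\}$: since $\zeta$ is only characterised as the stable-manifold graph of \eqref{pl1}, I must compare $Z'=F_2(X,W)$ with $-W$ along $\zeta$ and show the full orbit is pushed to the correct side, preserving $W\ge0$. The second is upgrading ``trapped for all $t$'' to ``converges to $\mathcal{E}_*$'', which requires confining the $(X,Y)$ subsystem so that $X\to u_*$ and excluding nonconstant recurrent limit sets inside $\Sigma$. By contrast, once the complex-eigenvalue oscillation at $\mathcal{E}_1$ is noticed, the non-existence part is essentially immediate.
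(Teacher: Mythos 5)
The paper does not actually prove this proposition: it assembles the scaffolding (the planar comparison system \eqref{pl1}, the stable-manifold graph $\zeta$, the inequality $-W<F_2(X,W)<(\gamma-1)W$, and the wedge $\Sigma$) and then states the result with a citation to \cite{huang2016geometric}; the eigenvalue dichotomy at $\mathcal{E}_1$ appears only later, in the ``Numerical Results'' subsection, exactly as you derive it. Your reconstruction is therefore faithful to the intended method rather than an alternative to it. Your linearization is correct (and in fact fixes a typo in the paper's $J(\mathcal{E}_1)$: the $(4,3)$ entry should be $\partial_W F_2(u_1,0)=\gamma u_1/(\alpha+u_1)-1=dM=d\,j_3$, not $j_3$, which is what makes the discriminant $c^2-4d^2j_3$ come out right), and the non-existence half is essentially complete: complex $\lambda_{3,4}$ force sign changes of $W$ along any orbit on the unstable manifold with nonzero $(W,Z)$-component, while a trivial $(W,Z)$-component confines the orbit to the invariant predator-free plane, away from $\mathcal{E}_*$. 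The existence half remains a program: the two steps you flag are genuinely where the work lies, and one of them is slightly misdescribed. Since $W'$ is the same in \eqref{TRA} and \eqref{pl1} while $Z'=F_2>-W$, on the face $\{Z=\zeta(W)\}$ one gets $\frac{d}{dt}\bigl(Z-\zeta(W)\bigr)>0$, so orbits cross $B_1$ \emph{outward}; $B_1$ and $B_2$ belong to the immediate exit set rather than forming a trapping boundary, and the conclusion must come from the Wa\.{z}ewski/connectedness argument you invoke, not from forward invariance of $\Sigma$. Likewise, upgrading ``remains in $\Sigma$'' to ``converges to $\mathcal{E}_*$'' needs a separate argument excluding nonconstant limit sets in the four-dimensional wedge. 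These gaps are precisely what the cited reference supplies, so your plan is sound but not yet a self-contained proof of part (i).
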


 Next, we verify the heteroclinic connection between $\mathcal{E_*}$ and $\mathcal{E}_1$ together with stable and unstable regions of monotonic and non-monotonic travelling waves using numerical simulations.

\subsubsection{Numerical Results}
The Jacobian of the system (\ref{TRA}) evaluated at $\mathcal{E}_1$ is 
$$J(\mathcal{E}_1)=\begin{pmatrix}
c^2 & -c^2 & 0 &0 \\
j_1 & 0 & j_2 &0 \\
0 & 0& \frac{c^2}{d} &-\frac{c^2}{d}\\
0 &0& j_3 &0 
\end{pmatrix},$$
where $$j_1=\sigma u_1(1-2 {u_1})<0,\;j_2=-{\frac {u_1}{\alpha+u_1}}<0\;\;\mbox{ and }\;\; j_3={\frac{1}{d}\left(\frac {  \gamma u_1  }{\alpha+u_1}-1\right)}>0.
$$
The eigenvalues of $J(\mathcal{E}_1)$ are 
$$\lambda_{1,2}=\frac{c^2\pm c\sqrt{c^2-4j_1}}{2},\;\,  \lambda_{3,4}=\frac{c^2\pm c\sqrt{c^2-4d^2j_3}}{2d},
$$
where $\lambda_{1,2}$ are always real but  $\lambda_{3,4}$ may be real or complex conjugate. The complex conjugate eigenvalues correspond to spiral solution around $\mathcal{E}_1$ which leads to negative population density. Hence, $\lambda_{3,4}$ must be real for the existence of travelling wave, which implies $c\geq 2 d \sqrt{j_3}$.  Hence, the minimum wave speed is $c_{min}=2 d \sqrt{j_3}$. 
The Jacobian of the system (\ref{TRA}) evaluated at $\mathcal{E}_*$ is given by
$$J(\mathcal{E}_*)=\begin{pmatrix}
c^2 & -c^2 & 0 &0 \\
a_{10} & 0 &a_{01} &0 \\
0 & 0& \frac{c^2}{d} &-\frac{c^2}{d}\\
b_{10} &0& b_{01} &0 
\end{pmatrix}.$$
We find that  two of the eigenvalues of $J(\mathcal{E}_*)$ have negative real parts. There is a heteroclinic orbit $\Phi(t )$ contained in stable manifold of $\mathcal{E}_*$ and unstable manifold of $\mathcal{E}_1,$ i.e., $\Phi(t )\rightarrow \mathcal{E}_1 $ as $t\rightarrow-\infty$ and $\Phi(t )\rightarrow \mathcal{E}_* $ as $t\rightarrow+\infty$. 
We have plotted that heteroclinic connection $\Phi(t )$ in Fig. \ref{manifold}.  We also plot the boundaries $B_1$ and $B_2$ in  Fig. \ref{manifold}(b) which shows that $\Phi(t )$ is contained in the wedged-shaped region $\Sigma.$
\begin{figure}[!ht]
\begin{subfigure}[b]{.48\textwidth}
  \centering
  \centering
\includegraphics[scale=0.5]{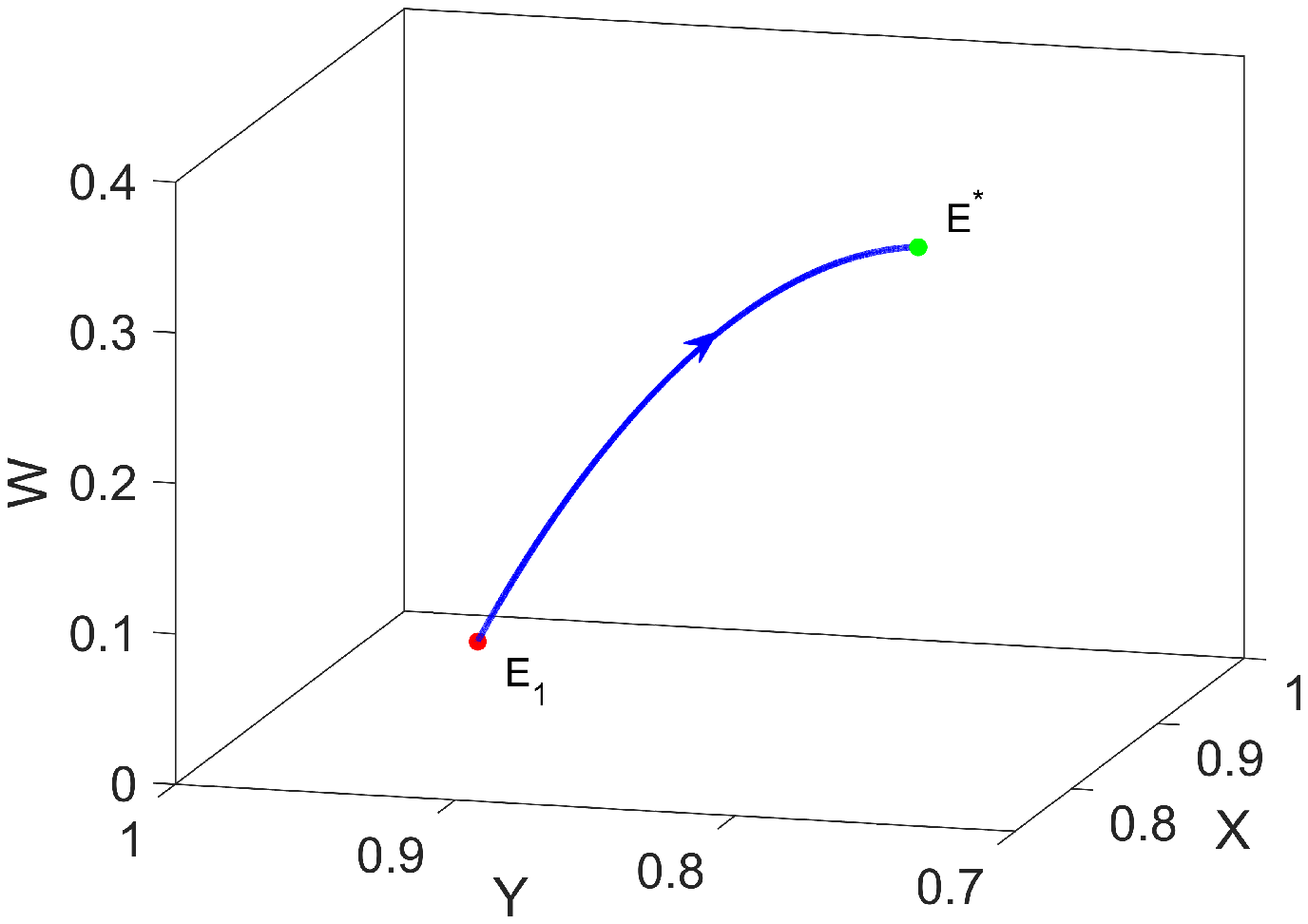}\\ 
\caption{}
\label{try}
\end{subfigure}
\begin{subfigure}[b]{.48\textwidth}
   \centering
\includegraphics[scale=0.5]{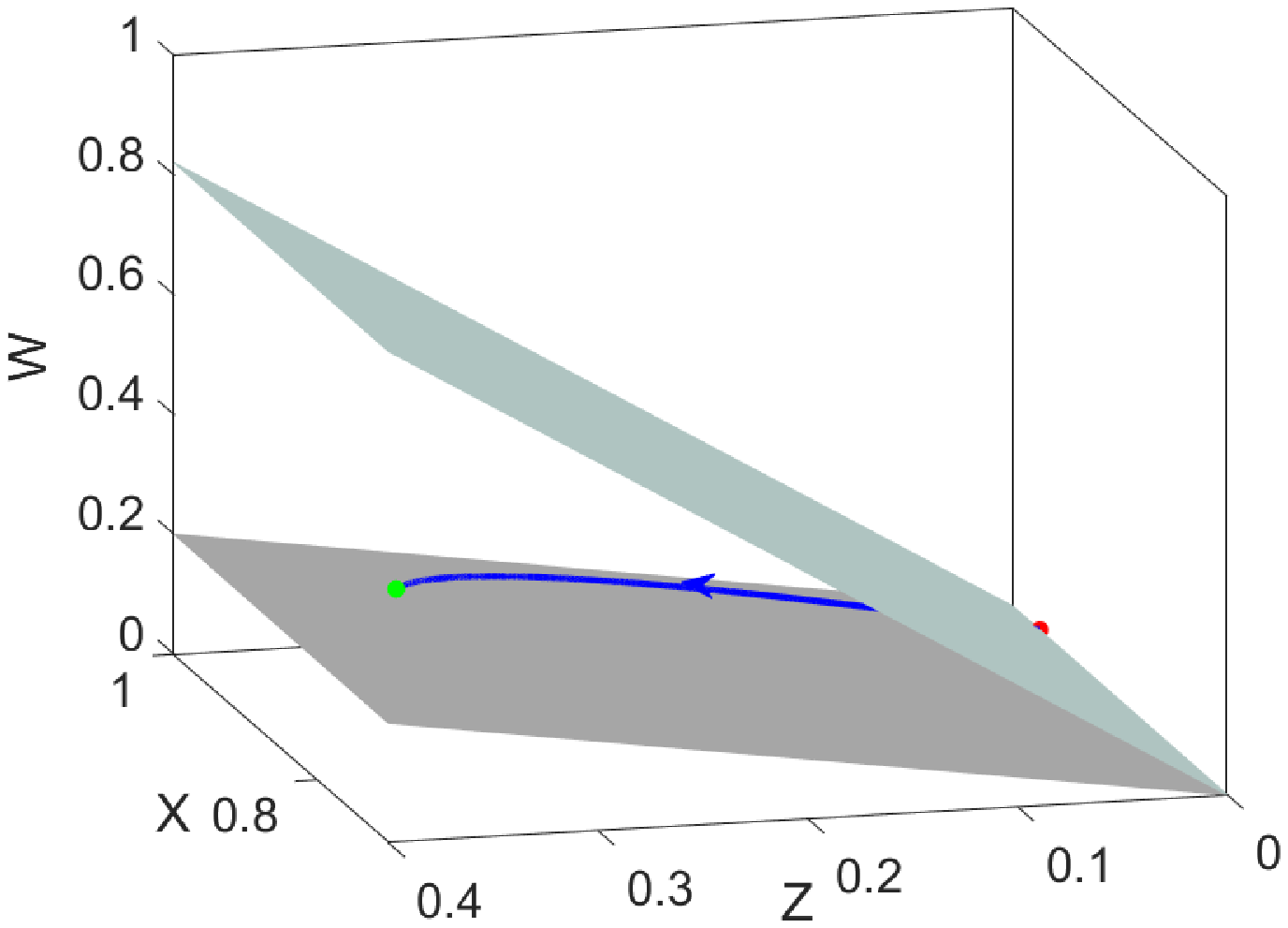}\\ 
\caption{}
\label{trz}
\end{subfigure}
\caption{ Heteroclinic connection of the system (\ref{TRA})  between $\mathcal{E}_1(0.9615,0.9615,0,0)$ and $\mathcal{E}_*( 0.7291, 0.7291,0.3791,0.3791)$ for $d=46$, $\sigma=2.7$ and $c=5.9$. (b) Same heteroclinic  connection  in between the planes $W=2Z$ and $Z=\zeta(W)$. }
\label{manifold}
\end{figure}

To capture  the travelling wave with numerical simulation, we fix the temporal parameters  $\eta=0.1,$ $ \alpha=0.07,\;\beta=0.2, \gamma=1.2,$  and consider the following initial conditions: 
\begin{equation}
    u(x,0)= \begin{cases}
  u_*  & 0\leq x<200 \\
  u_1 & 200\leq x\leq2000
\end{cases},\quad\mbox{and}\quad v(x,0)= \begin{cases}
  v_*  & 0\leq x<200 \\
  0 & 200\leq x\leq2000
\end{cases}.\;
\end{equation}
The eigenvalues of $J(\mathcal{E}_*)$ determine the nature of the travelling wave.  If all the eigenvalues of the $J(\mathcal{E}_*)$ are real, then the system (\ref{pde}) shows monotonic travelling wave. On the other hand, it exhibits  non-monotonic travelling wave if some of the eigenvalues of $J(\mathcal{E}_*)$  are complex. These complex eigenvalues are responsible for the oscillation in the non-monotonic travelling wave-front. Snapshots of monotonic and non-monotonic travelling waves at  different times $t=100,200$ and $300$  are plotted in Fig. \ref{monotone}. The shape of the solution profiles  are similar for both the monotonic and non-monotonic cases with the advancement of time. The minimum wave propagation speed $c_{min}=2d\sqrt{j_3}\approx 4.68$ 
for $\sigma=2.7$ and $d=46$.  Comparing the locations of travelling wave at two different times in Fig.  \ref{monotone}(a), we calculate the speed of the travelling wave to be $4.71$ approximately. Hence, the numerical value and analytical value of the speed of propagation are in close agreement. 

We have also plotted a diagram (see Fig. \ref{trwbif}) in the $\sigma$-$c$ parametric plane for the system \eqref{TRA}, which shows the existence and nature of the travelling wave solution of the system (\ref{pde}). The system (\ref{TRA}) does not admit any travelling wave solution  $\Phi(t,\textbf{p})$ for $c<2d\sqrt{j_3}.$ For   $c\geq 2d\sqrt{j_3}$, we obtain non-monotonic travelling wave when $\sigma>2.68$ and  monotonic travelling wave when $\sigma_H<\sigma<2.68.$

\begin{figure}[!t]
\begin{subfigure}[b]{.48\textwidth}
  \centering
  \centering
\includegraphics[scale=0.5]{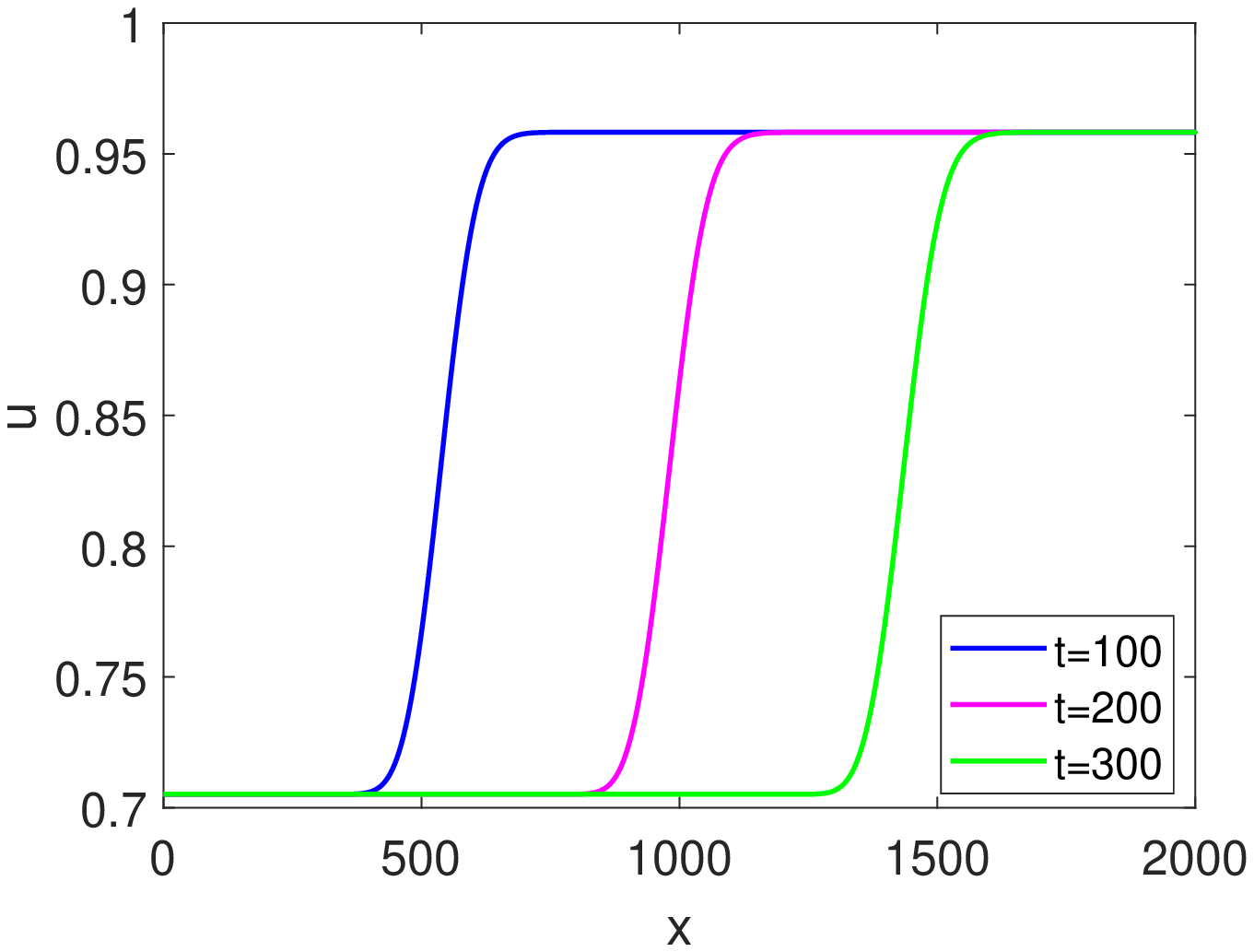}\\ 
\caption{}
\label{mu}
\end{subfigure}
\begin{subfigure}[b]{.48\textwidth}
  \centering
  \centering
\includegraphics[scale=0.5]{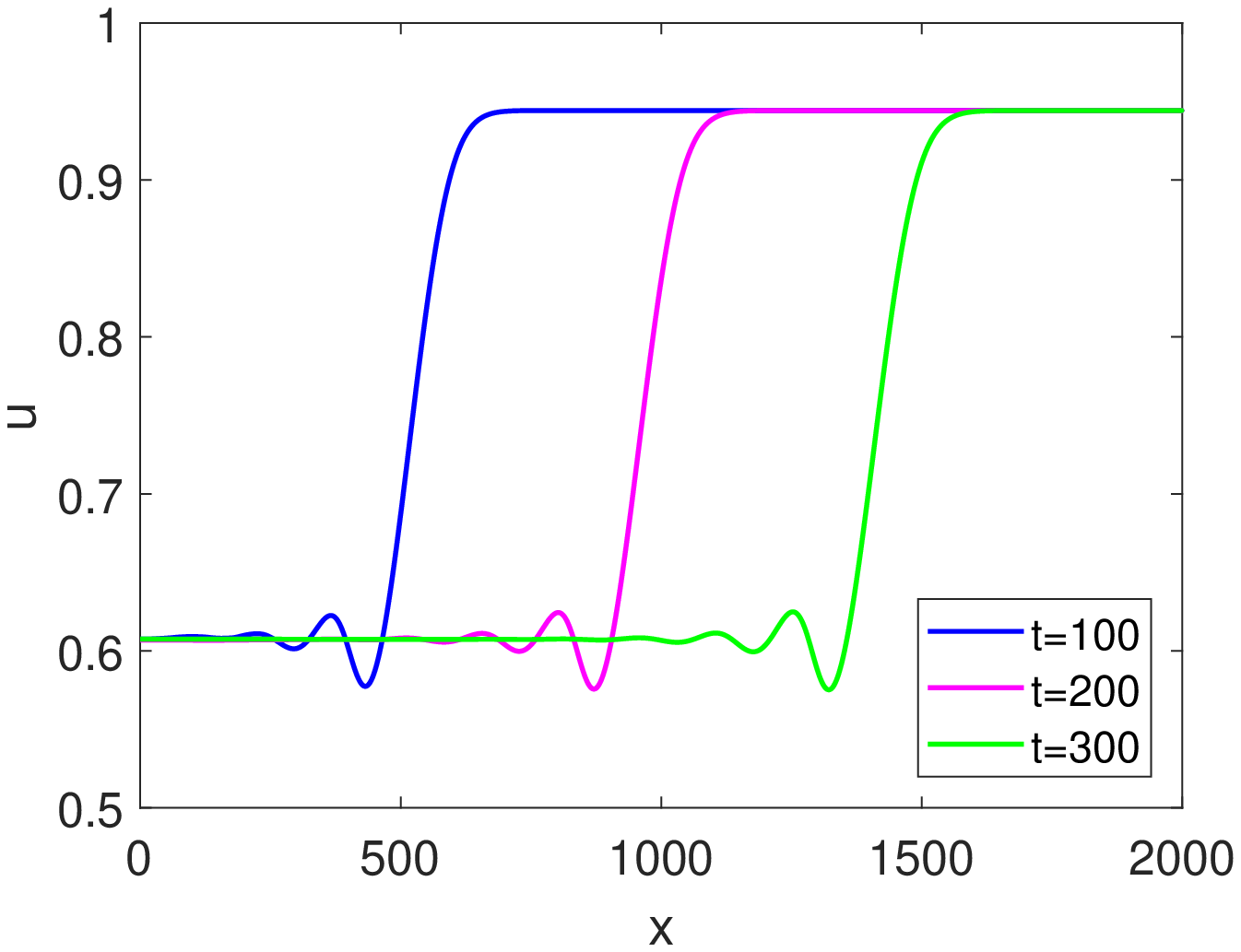}\\ 
\caption{}
\label{Nonmu}
\end{subfigure}
\caption{Travelling wave solution: (a) monotonic profile for $\sigma=2.7,$ (b) non-monotonic profile for $\sigma=1.9$. Other parameter values are $d=46, \eta=0.1, \alpha=0.07, \beta=0.2$ and $\gamma=1.2.$}
\label{monotone}
\end{figure}
\begin{figure}[!ht]
\centering
\includegraphics[scale=0.55]{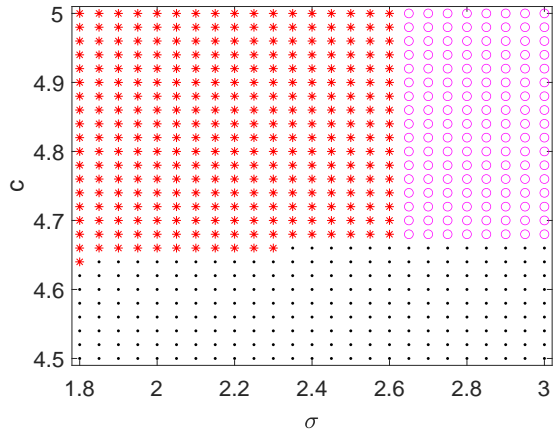}\\
\caption{Existence and nature of travelling wave solution in the $\sigma$-$c$ parametric plane. Black dots indicate region of non-existence of travelling wave solution. Red asterisks and magenta circles denote non-monotonic  and monotonic travelling waves respectively.  Other parameter values are $\eta=0.1,$ $ \alpha=0.07,\;\beta=0.2, \;\gamma=1.2$ and $d=46$.}
\label{trwbif}
\end{figure}
\subsection{Spatio-temporal Chaos}

The temporal model \eqref{ode} shows bistability between trivial equilibrium point and coexisting periodic solution for  $\sigma_{Het}<\sigma<\sigma_H$. But, introduction of a small amplitude spatial heterogeneity around the unstable coexisting steady state leads to non-homogeneous non-stationary solution for small values of $d$. We take  parameter value $d=5$,  $\sigma=1.8,$ $\vert \Omega \vert=500,$ and the corresponding results  are shown in Fig. \ref{SPC}.  The plot of the spatial average of the prey population ($U_{av}$) against time shown in Fig. \ref{SPC}(a) reveals  the chaotic nature of the solution. This is further supported by Fig. \ref{SPC}(b), which shows  that the emerging pattern does not converge to any stationary steady state. Using XPPAUT \cite{ermentrout2003simulating}, we obtain  the largest Lyapunov exponent $\lambda_{max}=0.012>0,$ which confirms the chaotic nature of the dynamics. Thus, the prey and predator populations exhibit  spatio-temporal chaos for certain  parameter value in the Hopf region. 

\begin{figure}[!ht]
\begin{subfigure}[b]{.48\textwidth}
   \centering
\includegraphics[scale=0.5]{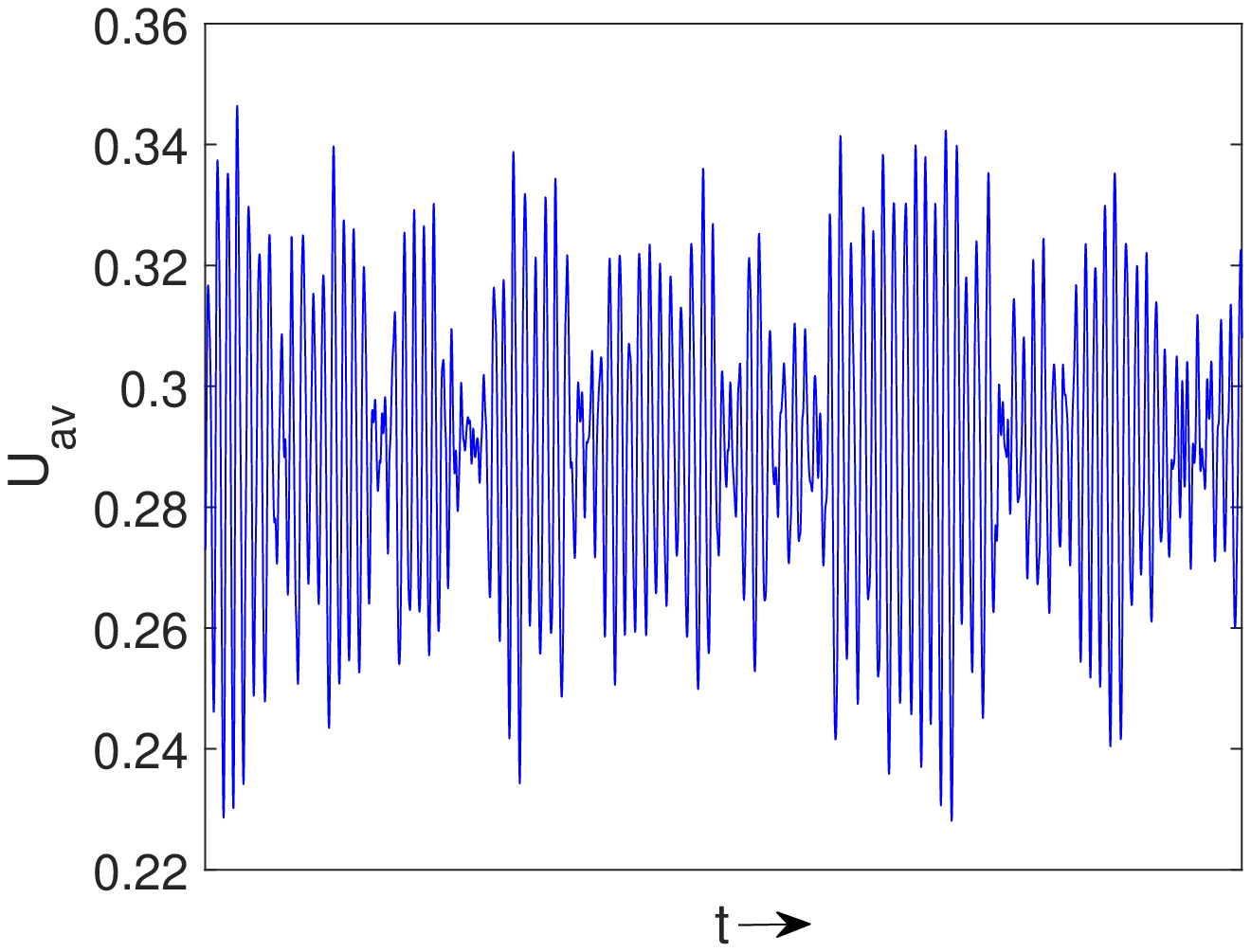}\\ 
\caption{}
\end{subfigure}
\begin{subfigure}[b]{.48\textwidth}
   \centering
\includegraphics[scale=0.5]{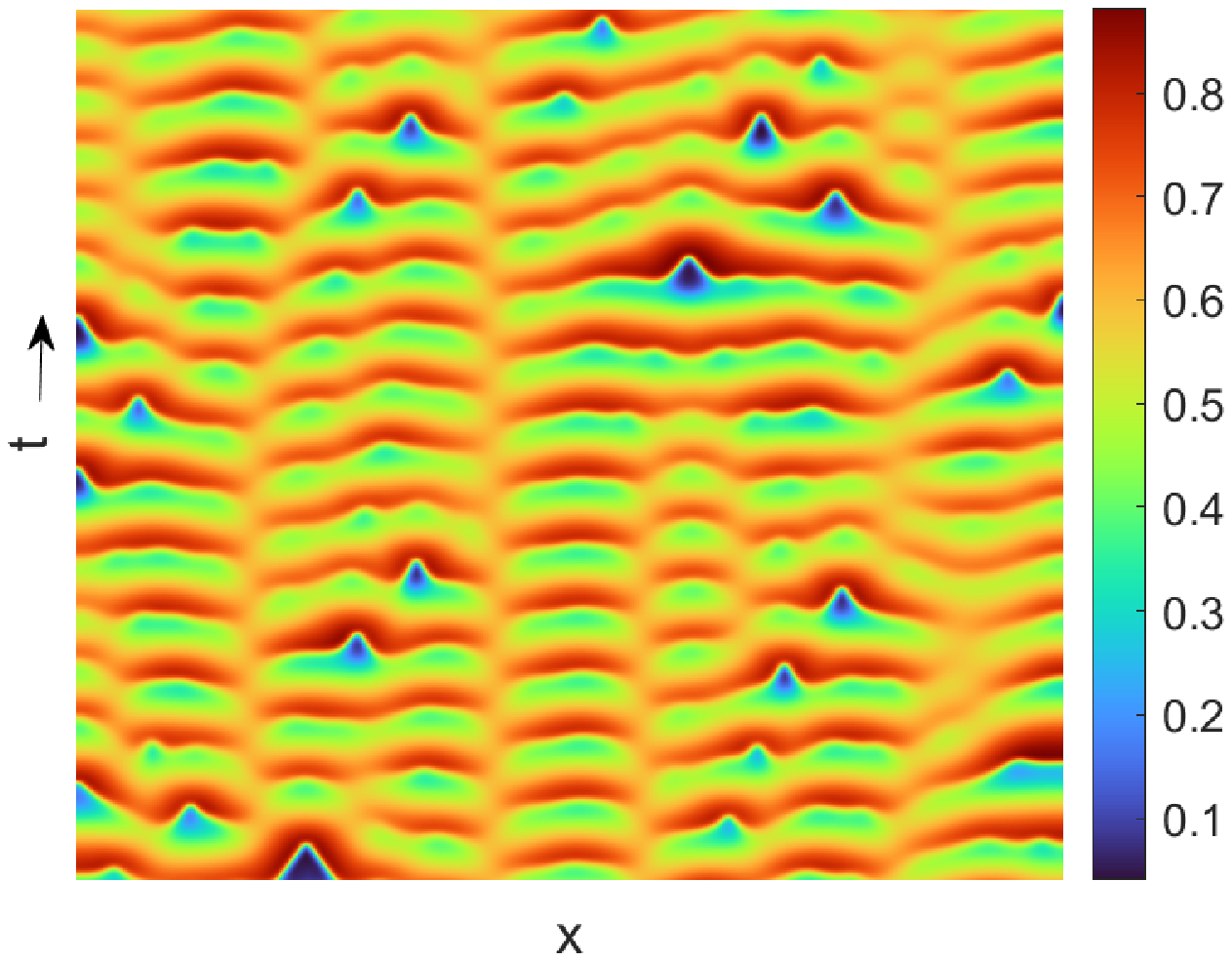}\\ 
\caption{}
\end{subfigure}

\caption{ 
Spatio-temporal chaos: (a) spatial average of the prey population against time, (b) space-time plot of the prey population distribution after the initial transient state.  Parameter values are $\alpha=0.07,\beta=0.2, \gamma=1.2,\eta=0.1$, $\sigma=1.8$ and $d=5.$}  
\label{SPC}
\end{figure}

\begin{figure}[!ht]
\begin{subfigure}[b]{.48\textwidth}
  \centering
\includegraphics[scale=0.5]{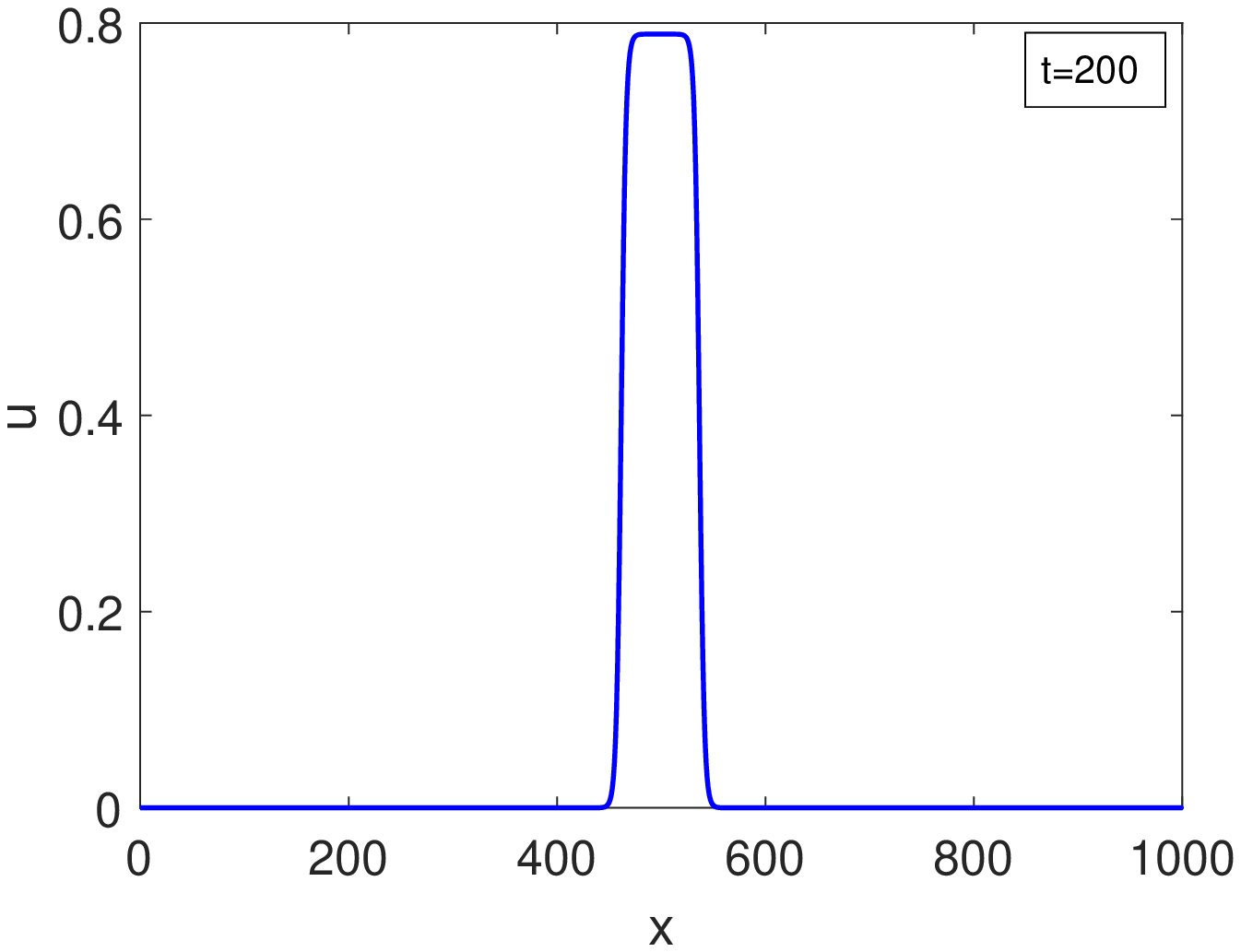}\\ 
\caption{}
\label{mp800}
\end{subfigure}
\begin{subfigure}[b]{.48\textwidth}
  \centering
\includegraphics[scale=0.5]{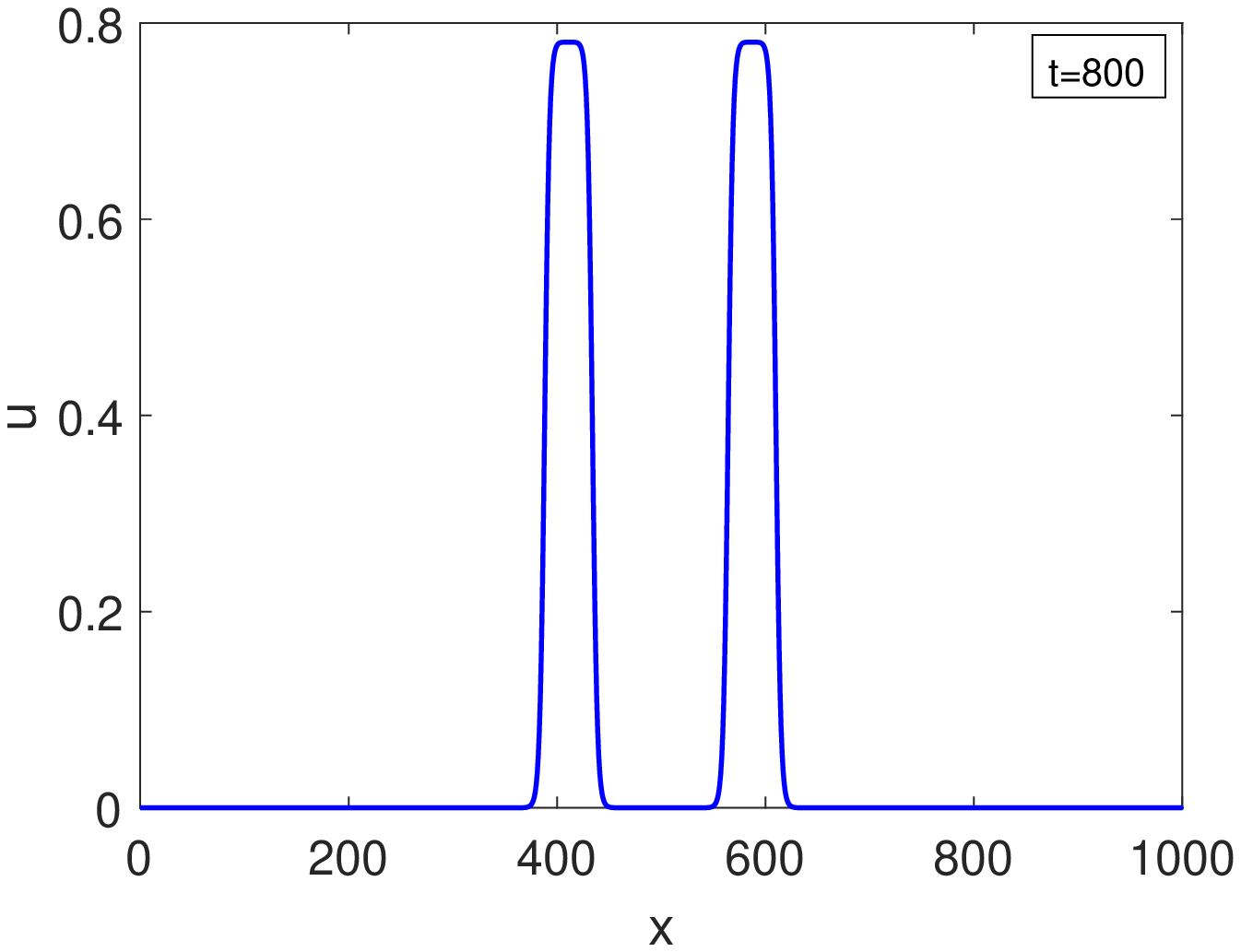}\\ 
\caption{}
\label{mp200}
\end{subfigure}
\begin{subfigure}[b]{.48\textwidth}
  \centering
\includegraphics[scale=0.5]{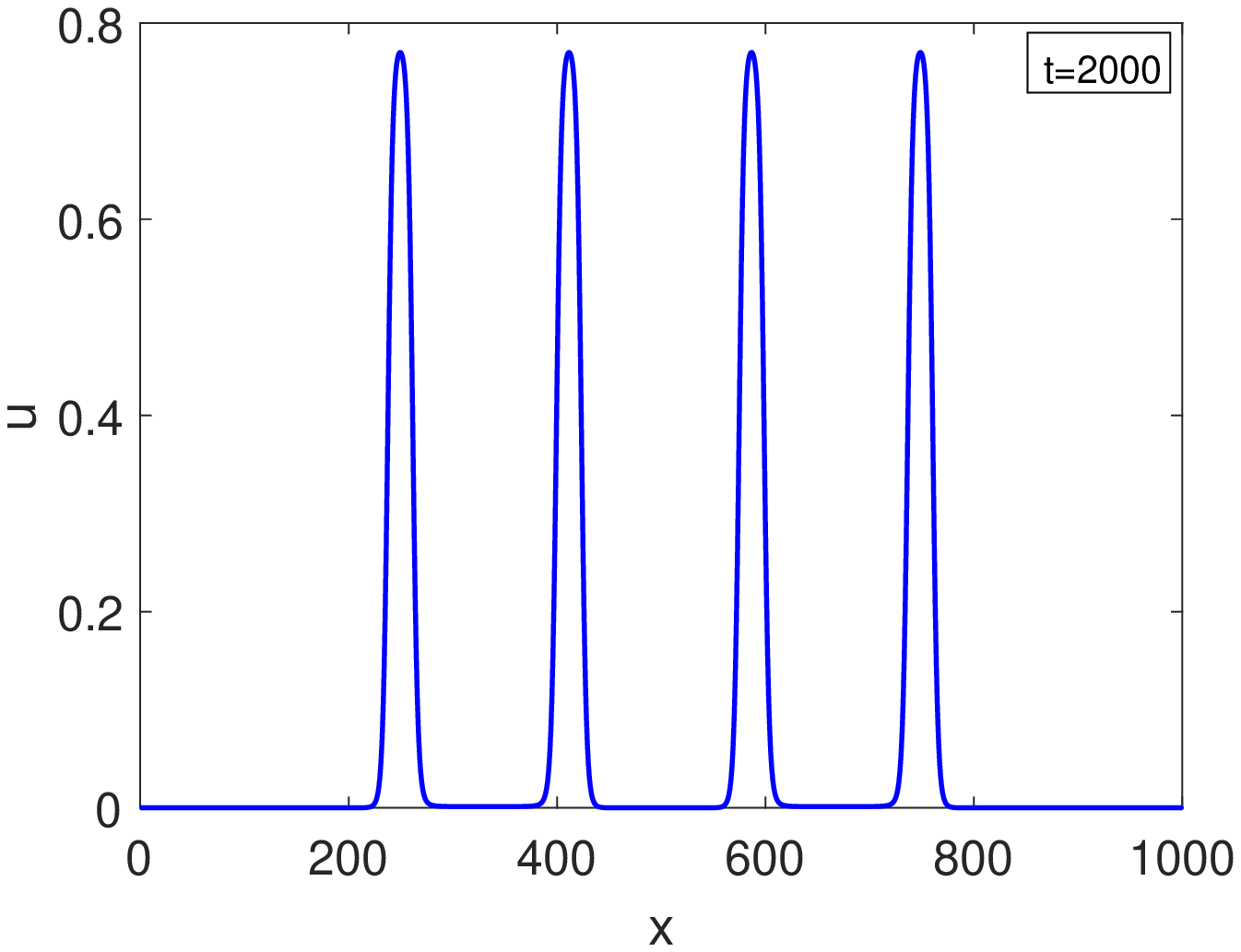}\\ 
\caption{}
\label{mp2000}
\end{subfigure}
\begin{subfigure}[b]{.48\textwidth}
  \centering
  \hspace*{0.5cm}
\includegraphics[scale=0.5]{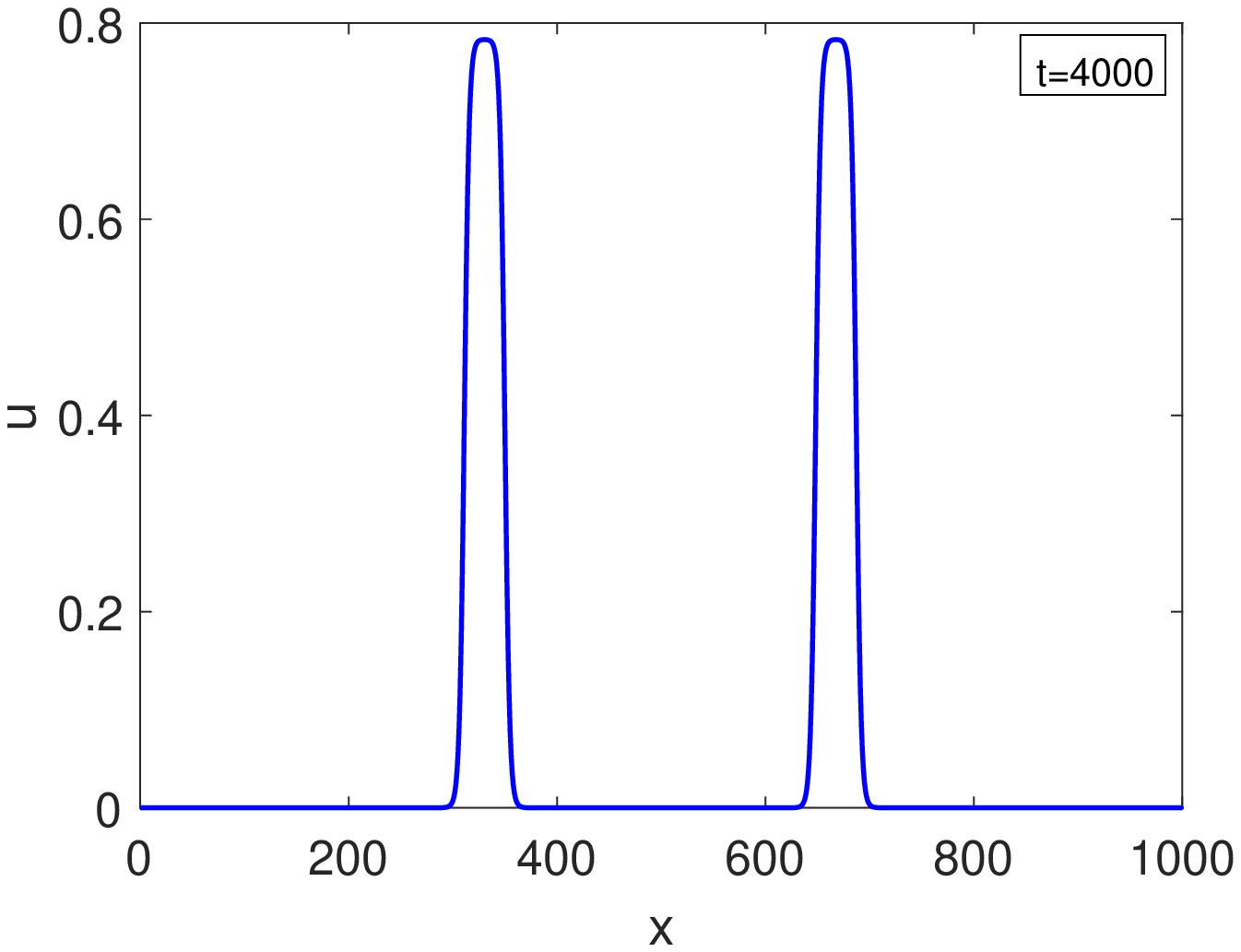}\\ 
  \caption{}
\label{mp4000}
\end{subfigure}
\caption{Multiple moving pule solution profiles at different times. Here  parameter values are $\sigma=0.6$, $d=20, \eta=0.1, \alpha=0.07, \beta=0.2$ and $\gamma=1.2.$ }
\label{mp}
\end{figure}
\subsection{Moving Pulse solution}
Now we take $\sigma<\sigma_{Het}$ for which the co-existing steady state is unstable. Suppose that the initial population distribution consists of co-existing state in a small region around the centre of the domain.  Introduction of a small amplitude spatial heterogeneity leads to moving pulse solution which shows intriguing complex dynamics.  We fix the parameter set to  $\eta=0.1,\alpha=0.07, \gamma=1.2$, $\sigma=0.6$, $L=1000$, $\beta=0.2$ and $d=20,$  and  consider the following initial distribution of the populations: 
\begin{equation}
    u(x,0)= \begin{cases}
  u_*+ \xi(x) & 495\leq x\leq 505 \\
 0 & \text{otherwise}
\end{cases}\;\mbox{and}\; v(x,0)= \begin{cases}
 v_*+ \xi(x) & 495\leq x\leq 505 \\
 0 & \text{otherwise}
\end{cases},\;
\end{equation}
where $\xi(x)$ is the Gaussian noise of amplitude $10^{-2}$.  Initially, the system \eqref{pde} produces a standing pulse solution consisting of a single island of nonzero populations surrounded by dead zones. The populations in the dead zones are almost extinct [see Fig. \ref{mp}(a)].  As time increases, the width of the island increases and splits into two islands as shown in Fig. \ref{mp}(b). These new islands are also moving towards the boundaries and they are symmetric about the centre $x=L/2.$ These two islands again split giving rise to four islands as shown in Fig. \ref{mp}(c). Through collision between  the adjacent islands, the system   either produces new islands or destroys some parent islands. The maximum possible number of islands in the considered domain is $16$. After the collision among the maximum number of islands, the number of islands reduces to two as shown in Fig. \ref{mp}(d) and the previous cycle repeats. The final spatio-temporal dynamics of the system is periodic in time with  time period $2558$ approximately.  The space-time plot of the prey population is shown in Fig. \ref{MP3}, which is a complex pattern consisting of many triangle shapes of invasion.   Such a pattern of many triangle shapes is often called the Sierpinski gasket pattern \cite{hayase2000self,kazantsev2003spiking}.

\textcolor{magenta}{}

\begin{figure}[!ht]
\centering
\includegraphics[scale=0.5]{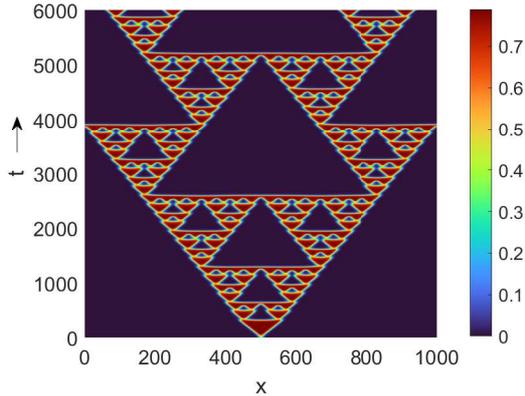}\\
\caption{Space-time plot of the prey population for moving pulse solution. Here parameter values are $\alpha=0.07,\beta=0.2, \gamma=1.2,\eta=0.1$, $\sigma=0.6$ and $d=20.$}
\label{MP3}
\end{figure}

\section{Conclusion}\label{discussion}
Our main objective of this work is to investigate the effect of the reproductive Allee effect in prey growth on population establishment for prey-predator type interaction model  with specialist predator. Parametrization of growth function and the choice of functional response are prominent in determining factors behind the dynamics produced by the interacting populations under consideration. In the absence of the Allee effect, the models with logistic prey growth, prey-dependent functional response, and linear mortality rate of the specialist predator  can not provide any indication of extinction of  both species when we consider the temporal dynamics \cite{courchamp2008allee}. The spatio-temporal extension of such models can capture the population's dynamic evolution over their habitats and predict localized extinction. These spatio-temporal models admit travelling waves, waves of invasion, and spatio-temporal chaos \cite{petrovskii1999minimal,banerjee2017spatio}. On the contrary, mutual interference among the predators can lead to localized establishment of both species through the generation of stationary spatial patterns depending upon Turing bifurcation. On the other hand, the temporal models with a strong Allee effect in prey growth indicate the possibility of extinction of  both species either due to  a global bifurcation or due to initial population density. Uncontrolled or accelerated grazing rate of a specialist predator on the only food source, which suffers from the Allee effect, drives the prey towards  localized or global extinction and hence system collapses. A spatio-temporal prey-predator model with Allee effect in prey growth, prey-dependent functional response, and linear mortality rate for specialist predator can support the establishment of both species through successful invasion \cite{morozov2006spatiotemporal}. However, a model with predator-dependent functional response and/or intra-specific competition among the predators supports stationary pattern formation as well as oscillatory or chaotic coexistence depending upon the parameter regime and the rates of diffusivity. 

Here we have presented preliminary stability and bifurcation results for the temporal model with the prey growth rate $\sigma$ as the bifurcation parameter. As per model formulation, the prey growth rate incorporates the mating success and it significantly contributes towards stable and oscillatory coexistence of both species. A decline in mating success is not only harmful to the specialist predators but also the initial abundance of the prey population can not save them from extinction. A decline in the value of $\sigma$ drives the system from stable steady-state coexistence to total extinction through an oscillatory coexistence (super-critical Hopf bifurcation) and the system collapses due to global heteroclinic bifurcation. It is interesting to note that the existence of Hopf-bifurcation within temporal setup is necessary for the Turing instability but not sufficient \cite{turing1}. RM type reaction kinetics supports supercritical Hopf bifurcation but one can't find the Turing pattern in a spatio-temporal model with RM reaction kinetics. 

The extinction scenario of populations depends not only on the Allee effect of the prey population but also on the functional response which characterizes any prey-predator type interaction. Appropriate parametrization of Beddington–DeAngelis functional response allows us to compare the  population establishment mechanism  with two different types of functional responses.  Beddington–DeAngelis functional response becomes ratio-dependent functional response for $\alpha=0,$ which shows some qualitative changes in the system dynamics for same set of other parameter values. Here, the destabilization of coexisting equilibrium point leads to the eventual extinction of both  populations through a subcritical Hopf bifurcation. Contrary to the Beddington–DeAngelis functional form, here global  bifurcation (homoclinic) doesn't indicate the extinction of both populations rather it depends upon  the initial distribution of the populations for their survival. In this case, the basin of attraction of the coexisting equilibrium point is bounded by a global bifurcation generated (unstable) limit cycle. The evolution of the prey  population with the variation of prey growth rate is discussed in Appendix \ref{Ap3}. However, neglecting  predator interference, i.e., in case of Holling type-II functional response, the mating success does not affect the  stability of  coexisting equilibria (see more details in Appendix \ref{Ap3}). 

Our systematic and through investigation of the spatio-temporal model reveals that the emergence of stationary and non-stationary patterns solely linked with the strength of species interaction, the rate of diffusivity and the species' abundance.  By constructing upper and lower solutions, the asymptotic behaviour of the solutions has been established under suitable parametric restrictions. The solution of the diffusive system is always bounded regardless of any parametric restriction, which proves the global existence of the solution.  If the maximum of the initial prey distribution is less than the reproductive Allee threshold $u_2,$ then both populations become extinct in the final state, which is analogous  to the temporal dynamics. Existence of non-constant steady state of the diffusive system has been established using Leray–Schauder degree theory. With the help of Poincare inequality, we have also shown the absence of non-constant steady state solution when the ratio of diffusivity  lies in a particular range. However, the existence and non-existence of heterogeneous solution somehow related to the domain size. Non-constant steady-state does not exist in small domain size.

There are two prominent  mechanisms behind the formation of non-constant steady state patterns: the Turing instability and the BD transition. We have found the Turing and BD transition thresholds by converting the steady-state equations into a system of four coupled ordinary differential equations. A variety of stable and unstable non-constant stationary solutions are found through numerical continuation for admissible range of parameter values. These solutions, which include various Turing mode solutions and localized solutions, emerge  through  Turing instablity \cite{dey2022analytical,hillen1996turing} and BD points \cite{champneys1998homoclinic,al2021unified}. We observe multi-stability between  various non-constant solutions along with the homogeneous steady-state solutions. The final stationary distribution of the population at large time limit depends on the initial distribution of both populations. Here we have shown that though the parameter setup satisfies the Turing instability condition but the numerical simulation reveals the settlement of population to localized pattern. In ecology, several prey population has these types of  patchy stationary distribution based on niche separation with a continuous habitat \cite{holmes1988food,fryxell2014wildlife}. Prey and predator populations exhibit identical patterns since patches with high prey density are favored by predators. There is evidence of this phenomenon in the spatio-temporal interactions of   toxic newts and arrow snakes in western North America  \cite{brodie2002evolutionary}. 


Introducing predator in a small domain leads to specific invasive pattern of predators distributed  over the entire domain which  corresponds to travelling wave solution. Generally, two types of travelling waves exist in  prey-predator models: one corresponds to  a heteroclinic trajectory connecting two homogeneous steady-states and the other corresponds to  periodic travelling wave that involve limit cycle surrounding an unstable homogeneous state. Our model admits monotone and non-monotone travelling waves which are  established by showing a heteroclinic connection between two equilibria of the corresponding system of four  first-order ordinary differential equations. The boundedness of the heteroclinic connection is established inside a wedge-shaped region \cite{huang2016geometric}. The parametric regions for the existence of monotonic  and  non-monotonic travelling waves and non-existence of travelling wave  have been obtained with the help of exhaustive numerical simulations. We have also validated the theoretical results  with numerical simulations and discussed the invasion profile over the parametric region. For a   mobility rate of prey individuals close to predator, the temporal extinction scenario alters to localized extinction and regeneration of patches at nearby locations,  resulting in spatio-temporal chaotic or moving pulse solution. For lower values of $d$ in the Hopf region and some restricted prey growth rate  $(\sigma_{Het}<\sigma<\sigma_H)$, the spatio-temporal  system exhibits time aperiodic and non-homogeneous in space spatio-temporal chaos. Further decrease in $\sigma$ gives rise to multiple moving pulse solution forming Sierpinski gasket pattern. In real world, such types of spatio-temporal regular and irregular oscillations have been reported for some prey-predator type interactions that include interaction between Daphnia and Bythotrephes \cite{lehman1993food} and interaction between  tephritid flies and thistle population \cite{jeltsch1992oscillating}.

The novelty of this work lies in the identification of Turing bifurcation along with the  BD transition which leads to the formation of local patterns through transient Turing-like patterns.  Surprisingly, the Turing patterns appeared to be transitory patterns and both the species get established at large stationary patches through BD transition mechanism. This combination of Turing instability and BD transition is an addition to the list of known mechanisms behind long transients in spatio-temporal pattern formation \cite{morozov2020long}.  The choice of parametrization for the Beddington-Deangelis functional response helps us to conclude that  this localized pattern formation scenario is influenced by the reproductive Allee effect and obtained results can be verified with other two  functional responses namely Holling type-II and ratio-dependent functional response. It is well-known that systems with a prey-dependent functional response and linear death rate,  do not produce stationary Turing patterns. But if we choose a slightly different parameter setting for which  the temporal system with Holling type-II functional response has a stable coexisting equilibrium, then the corresponding spatio-temporal model supports the formation of localized stationary patterns due to BD transition. A few localized  patterns for different values of the parameter $d$ are shown in Appendix \ref{Ap3}. Instead of the reproductive Allee effect, if we consider only the logistic growth of the prey population, then the system fails to  produce any localized pattern. Thus, we  conclude that the reproductive Allee effect plays a central role in the formation of localized patterns.


\section*{Declarations}

\textbf{Conflict of interest}: The authors declare that they have no conflict of interest.\\[1em]
\textbf{Data availability statement}: The authors declare that the manuscript has no associated data.\\
\begin{appendices}
\section{   }\label{Ap1}
Here we derive upper bounds for $I_1$, $I_2$ and $I_3$ used in Theorem \ref{prop6}. Using proposition  \ref{prop5}, we find
\begin{alignat*}{4}
  I_1=& \int _\Omega (u-\bar u)^2 \big(-(u^2+u \bar u+\bar u^2)+(u+\bar u)(u_1+u_2)-u_1u_2 \big) dx\\
  \leq&\int _\Omega (u-\bar u)^2 \left[(u+\bar u)(u_1+u_2)-u_1u_2 \right]dx\\
  \leq& u_1(2-u_2) \int _\Omega (u-\bar u)^2 dx,\end{alignat*}
\begin{alignat*}{4}
\allowdisplaybreaks
  I_2=& \frac{1}{\alpha+\bar u+\beta\bar  v}\int _\Omega (u-\bar u)\left(\frac{\alpha (\bar u \bar v-uv)+\beta v \bar v(\bar u-u)+u\bar u (\bar v-v)}{\alpha+u+\beta v}\right)dx\\=&
  \frac{1}{\alpha+\bar u+\beta\bar  v}\int _\Omega (u-\bar u)\left(\frac{ \bar v(\bar u-u)(\alpha+\beta v)+u (\bar v-v) (\alpha + \bar u)}{\alpha+u+\beta v}\right)dx  \\
  \leq&  \frac{(\alpha + \bar u)}{\alpha+\bar u+\beta\bar  v}\int _\Omega (u-\bar u)\frac{ u (\bar v-v) }{\alpha+u+\beta v}dx\\
   \leq&  \frac{u_1}{\alpha}\int _\Omega \vert (u-\bar u)\,(v-\bar v) \vert dx\\
   \leq&  \frac{u_1}{2\alpha}\int _\Omega (u-\bar u)^2dx+\frac{u_1}{2\alpha}\int _\Omega (v-\bar v)^2dx,\end{alignat*}
and \begin{alignat*}{4}
\allowdisplaybreaks
  I_3=& \int _\Omega (v-\bar v)\bar v \left(\frac{\gamma u}{\alpha+u+\beta v}-1\right)dx\\
  =&\int _\Omega (v-\bar v)\bar v \left(\frac{\gamma u}{\alpha+u+\beta v}-\frac{\gamma \bar u}{\alpha+\bar u+\beta \bar v}\right)dx\\
   =&\frac{\gamma \bar v}{\alpha+\bar u+\beta\bar  v}\int _\Omega (v-\bar v) \frac{\alpha(u-\bar u)+\beta(u\bar v-\bar u v)}{\alpha+u+\beta v} \,dx\\
   = & \frac{\gamma \bar v}{\alpha+\bar u+\beta\bar  v}\int _\Omega (v-\bar v) \frac{\beta u(\bar v-v)+(u-\bar u)(\alpha + \beta v)}{\alpha+u+\beta v}\, dx\\
   \leq & \frac{\gamma \bar v}{\alpha+\bar u+\beta\bar  v}\int _\Omega (v-\bar v) \frac{(u-\bar u)(\alpha +\beta v)}{\alpha+u+\beta v} \,dx\\
   \leq & \frac{\gamma}{\beta}\int _\Omega (v-\bar v) (u-\bar u)\, dx \\
   \leq & \frac{\gamma}{2 \beta}\int _\Omega (u-\bar u)^2dx+\frac{\gamma}{2 \beta}\int _\Omega (v-\bar v)^2dx.
\end{alignat*}
\section{ } \label{Ap2}
Here we show 19-, 20- and 30- mode Turing solutions in Fig. \ref{turingg} 
and few localized solutions in Fig. \ref{localized}.

\begin{figure}[H]
\begin{subfigure}[b]{.32\textwidth}
  \centering
  \centering
\includegraphics[scale=0.38]{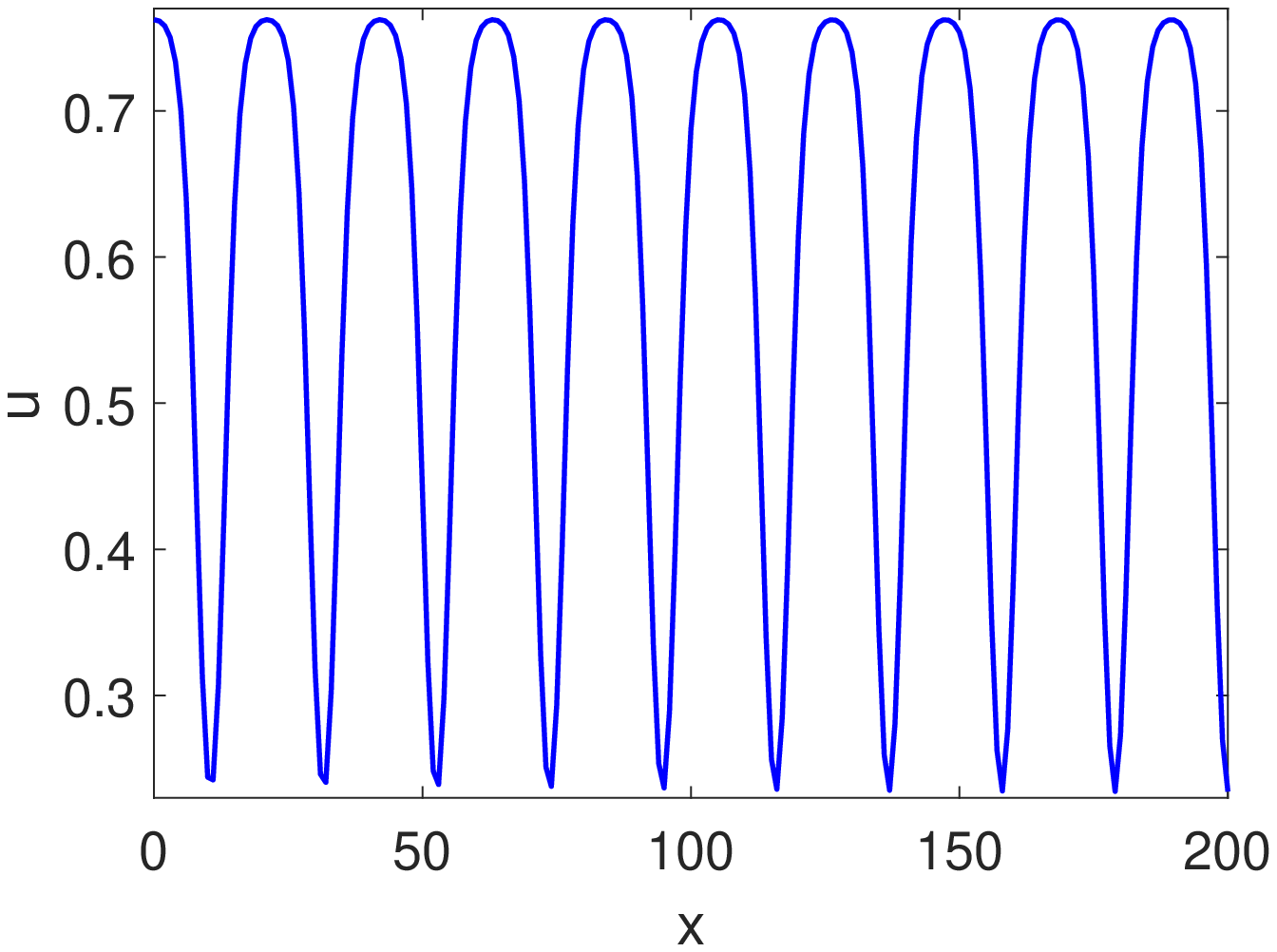}\\ 
\caption{}
\end{subfigure}
\begin{subfigure}[b]{.32\textwidth}
   \centering
\includegraphics[scale=0.38]{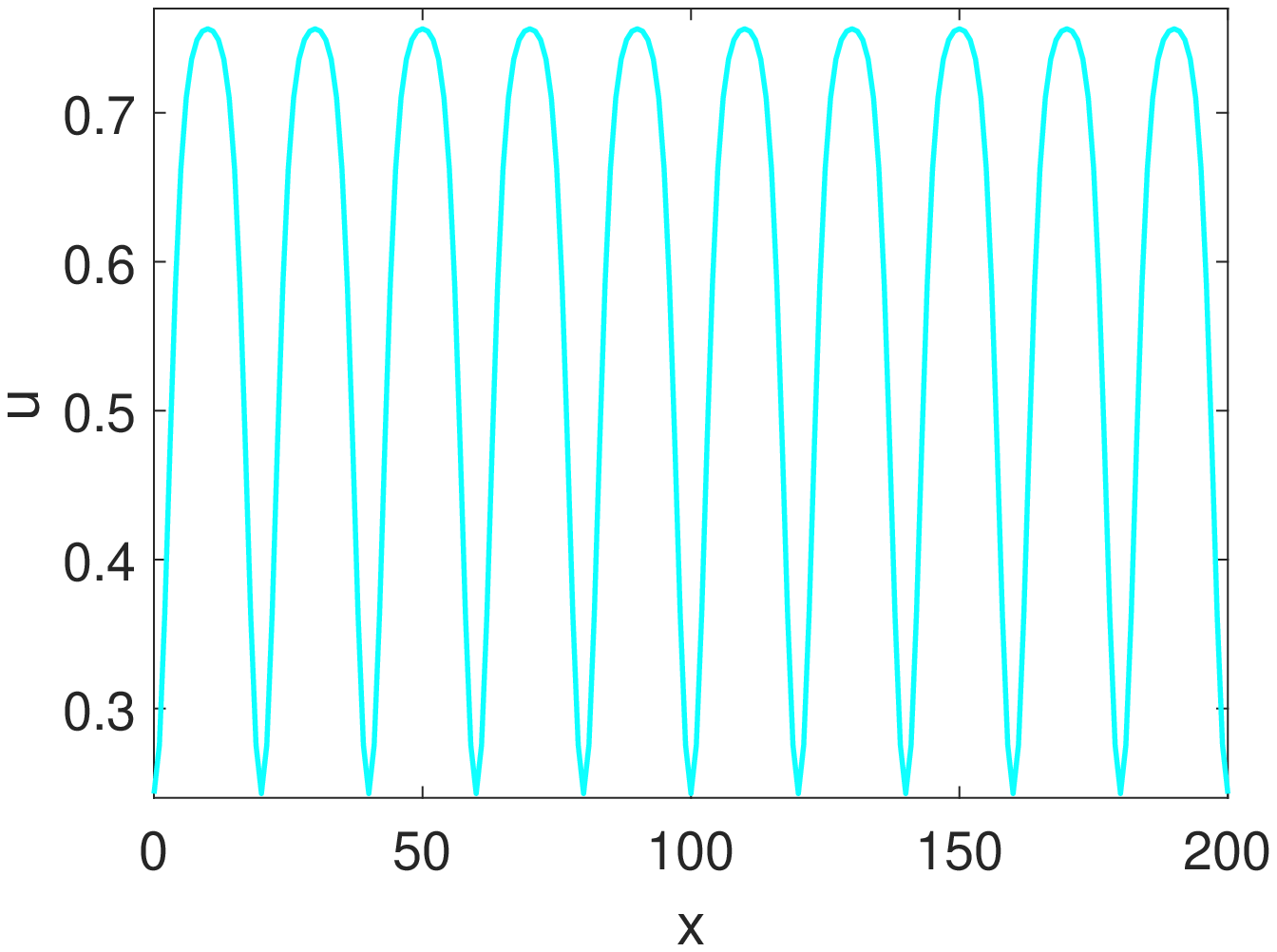}\\ 
\caption{}
\end{subfigure}
\begin{subfigure}[b]{.32\textwidth}
   \centering
\includegraphics[scale=0.38]{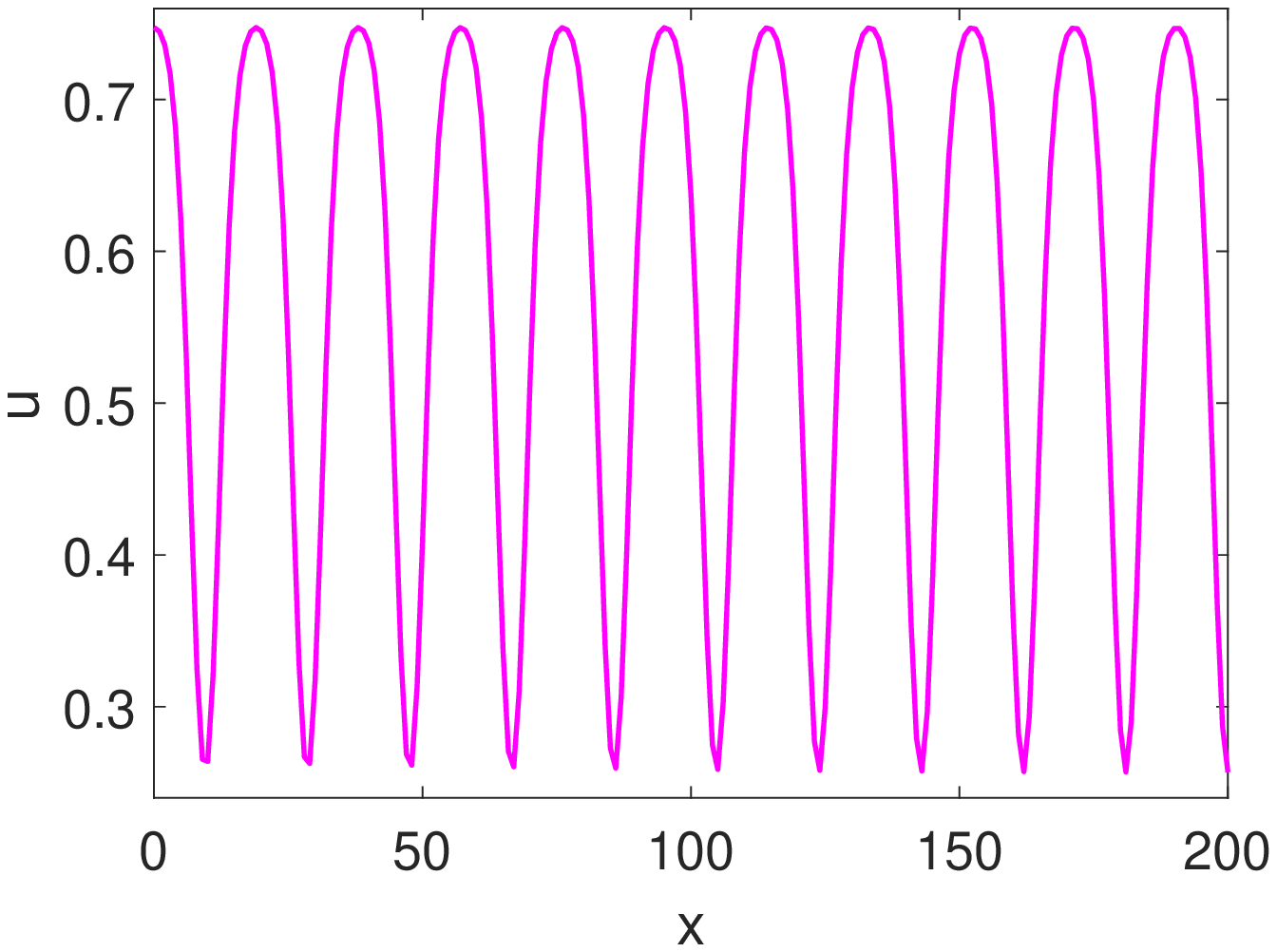}\\ 
\caption{}
\end{subfigure}
\caption{Turing pattern solution  of various modes:  (a) 19- mode solution for $\sigma=1.939$, (b) 20- mode solution  for $\sigma=1.867$, (c) 21- mode solution  for $\sigma=1.812$. The color of a solution curve corresponds to the same color diamond point shown in Fig. \ref{tur}(b).   Other parameter values are $\alpha=0.07,\beta=0.2, \gamma=1.2, \eta=0.1$ and $d=46.$}
\label{turingg}
\end{figure}

\begin{figure}[H]
\begin{subfigure}[b]{.32\textwidth}
  \centering
  \centering
\includegraphics[scale=0.38]{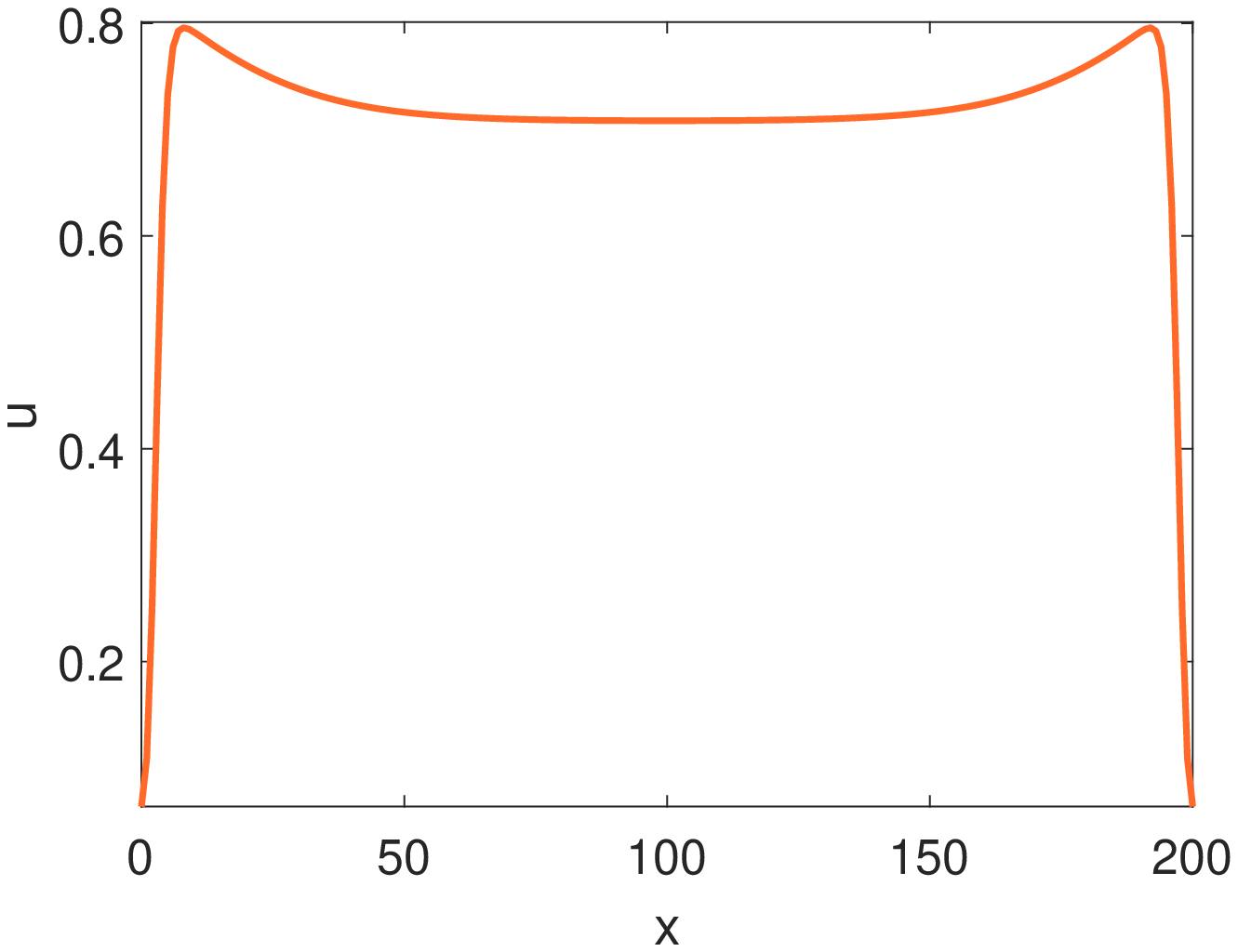}\\ 
\caption{}
\end{subfigure}
\begin{subfigure}[b]{.32\textwidth}
   \centering
\includegraphics[scale=0.38]{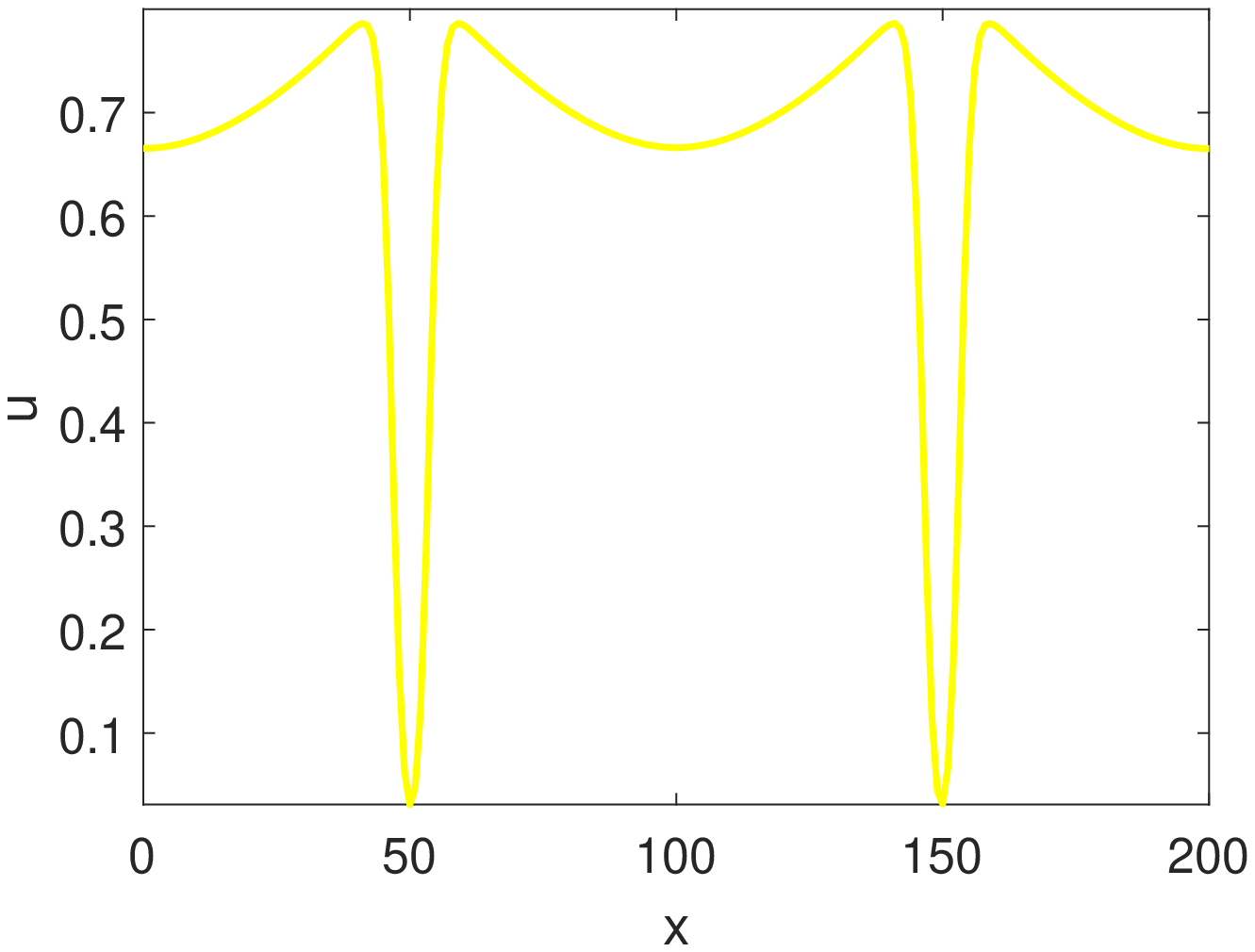}\\ 
\caption{}
\end{subfigure}
\begin{subfigure}[b]{.32\textwidth}
   \centering
\includegraphics[scale=0.38]{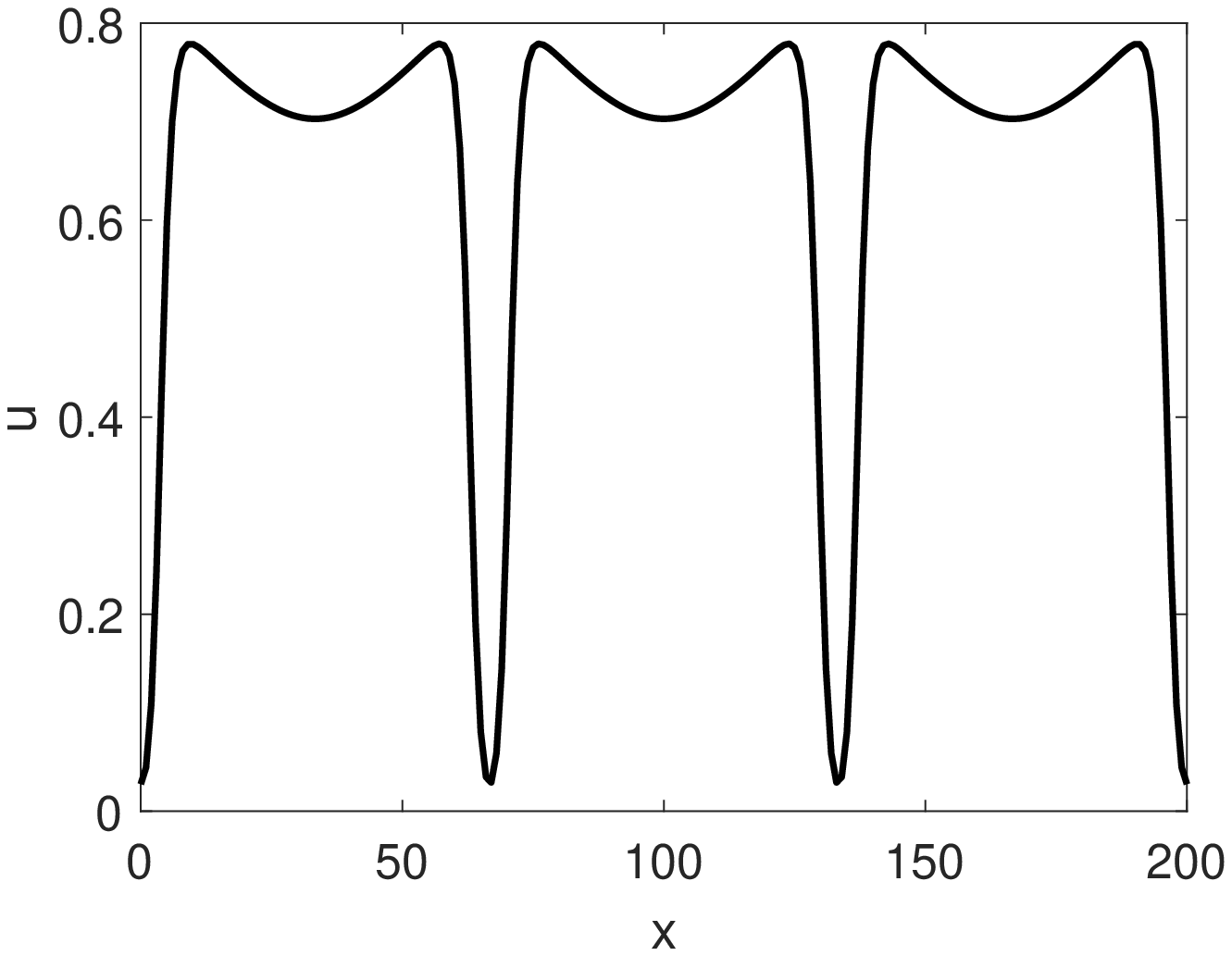}\\ 
\caption{}
\end{subfigure}
\caption{Localized pattern solutions where the colour of a solution curve corresponds to the same color point shown in Fig. \ref{tur}(c). Other parameter values are $\alpha=0.07,\beta=0.2, \gamma=1.2, \sigma=2.36,\eta=0.1$ and $d=46.$
}
\label{localized}
\end{figure}
\section{ } \label{Ap3}
\begin{figure}[!h]
 \begin{subfigure}[b]{.48\textwidth}
 \centering
\includegraphics[scale=0.5]{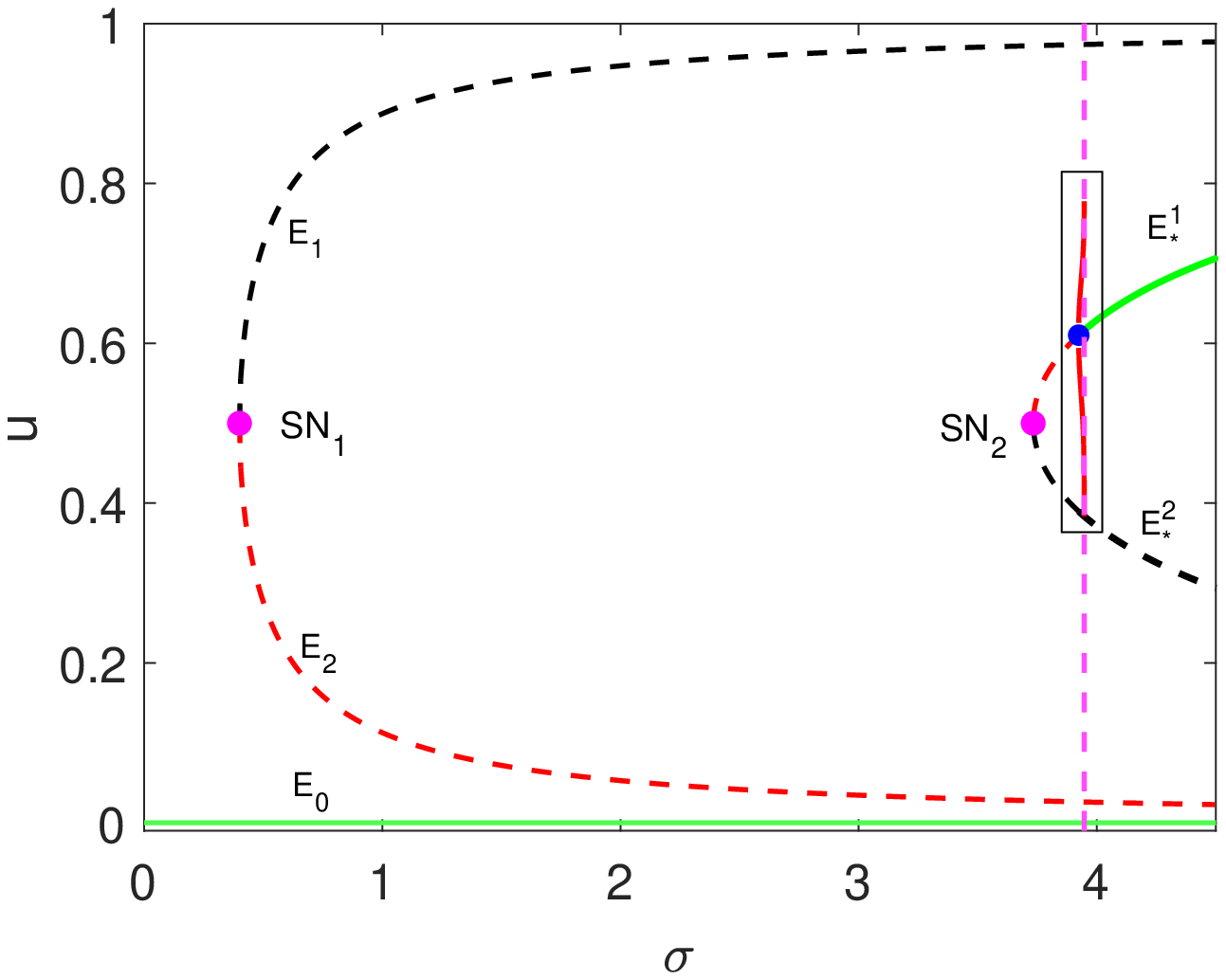}\\ 
 \caption{}
 \label{ratio1}
 \end{subfigure}
 \begin{subfigure}[b]{.48\textwidth}
   \centering
\includegraphics[scale=0.5]{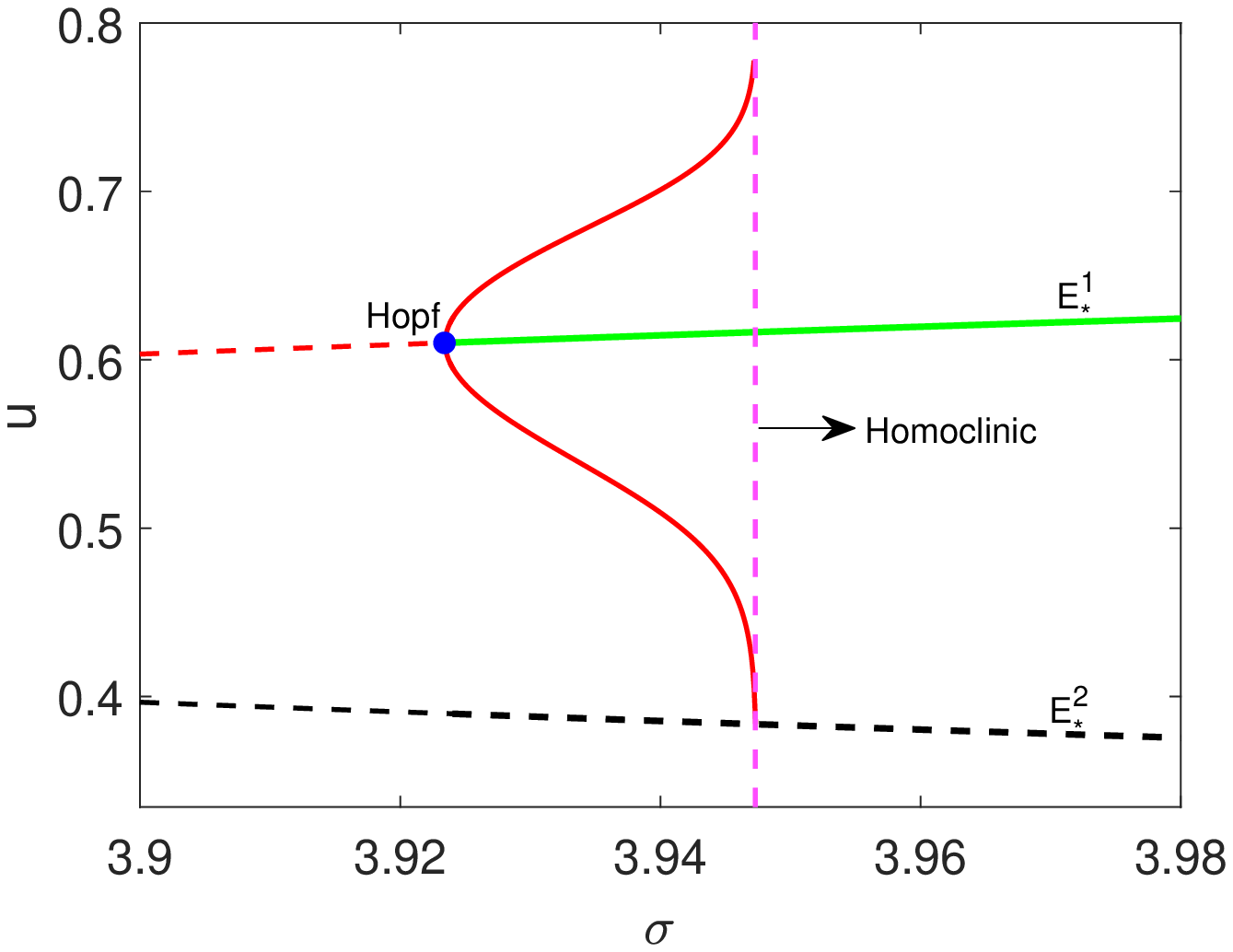}\\ 
 \caption{}
 \end{subfigure}
 \caption{(a) Bifurcation diagram in the $\sigma$-$u$ plane for the ratio-dependent functional response. (b) A zoom version of Fig. (a) for $\sigma\in [3.9,3.98]$. The point marked SN denotes the saddle-node bifurcation threshold. Here, solid green, red dashed and black dashed curves represent stable, unstable  and saddle branches of equilibria respectively. Further, solid red color curve represents the maximum and minimum of $u$ of the unstable limit cycle. Other parameter values are $\alpha=0,\beta=0.2, \gamma=1.2$ and $\eta=0.1.$} 
\label{Rdbif}
\end{figure}
Here we compare our findings with other functional responses. If we assume $\alpha=0$, then the resulting functional response is called a ratio-dependent functional response. The temporal dynamics of the system, keeping all other parameters the same with $\alpha=0$, changes  significantly. Here, two coexisting equilibria $E_*^1$ and $E_*^2$ emerge through a saddle-node bifurcation, where $E_*^2$ is always a saddle-node and the stability of $E_*^1$ depends on the Hopf bifurcation threshold $\sigma_H.$ An unstable limit cycle is generated due to subcritical Hopf bifurcation at $\sigma=\sigma_H$ which vanishes due to a global homoclinic bifurcation. The temporal dynamics is summarized in the one-parameter bifurcation diagram shown in Fig. \ref{Rdbif}. The corresponding spatio-temporal model shows stationary Turing  and localized patterns. It also exhibits dynamic patterns that include multiple  moving pulse solution, spatio-temporal chaos, and traveling wave.

However, the chosen functional response becomes  Holling type-II functional response for $\beta=0$. Then the system can have at most one coexisting equilibrium point $E_*$, whose  feasibility comes from a transcritical bifurcation. However, the stability of $E_*$ is independent of the parameter $\sigma.$ If we take the parameter $\gamma=1.11,$ then the unique coexisting equilibrium point $E_*$ is asymptotically stable (whenever it exists) for all value of $\sigma$. Although the system does not produce Turing pattern but it exhibits localized pattern. We have shown few localized patterns for different values of diffusion parameter $d$ in Fig. \ref{Holling}. Apart from the stationary pattern, the system also shows dynamic patterns that include  multiple  moving pulse solution, spatio-temporal chaos and travelling wave solution.
\begin{figure}[!h]
\begin{subfigure}[b]{.32\textwidth}
  \centering
  \centering
\includegraphics[scale=0.38]{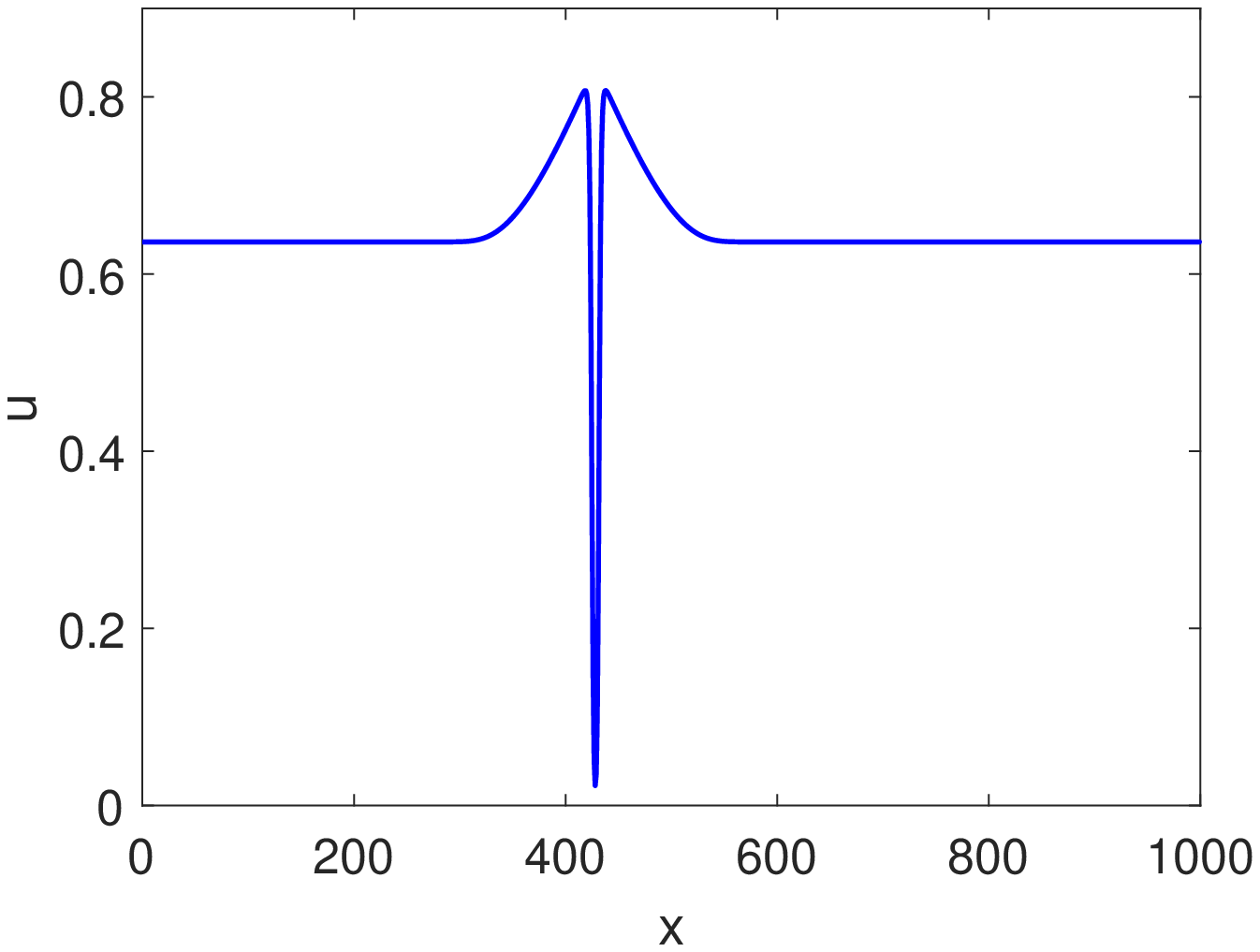}\\ 
\caption{}
\end{subfigure}
\begin{subfigure}[b]{.32\textwidth}
  \centering
  \centering
\includegraphics[scale=0.38]{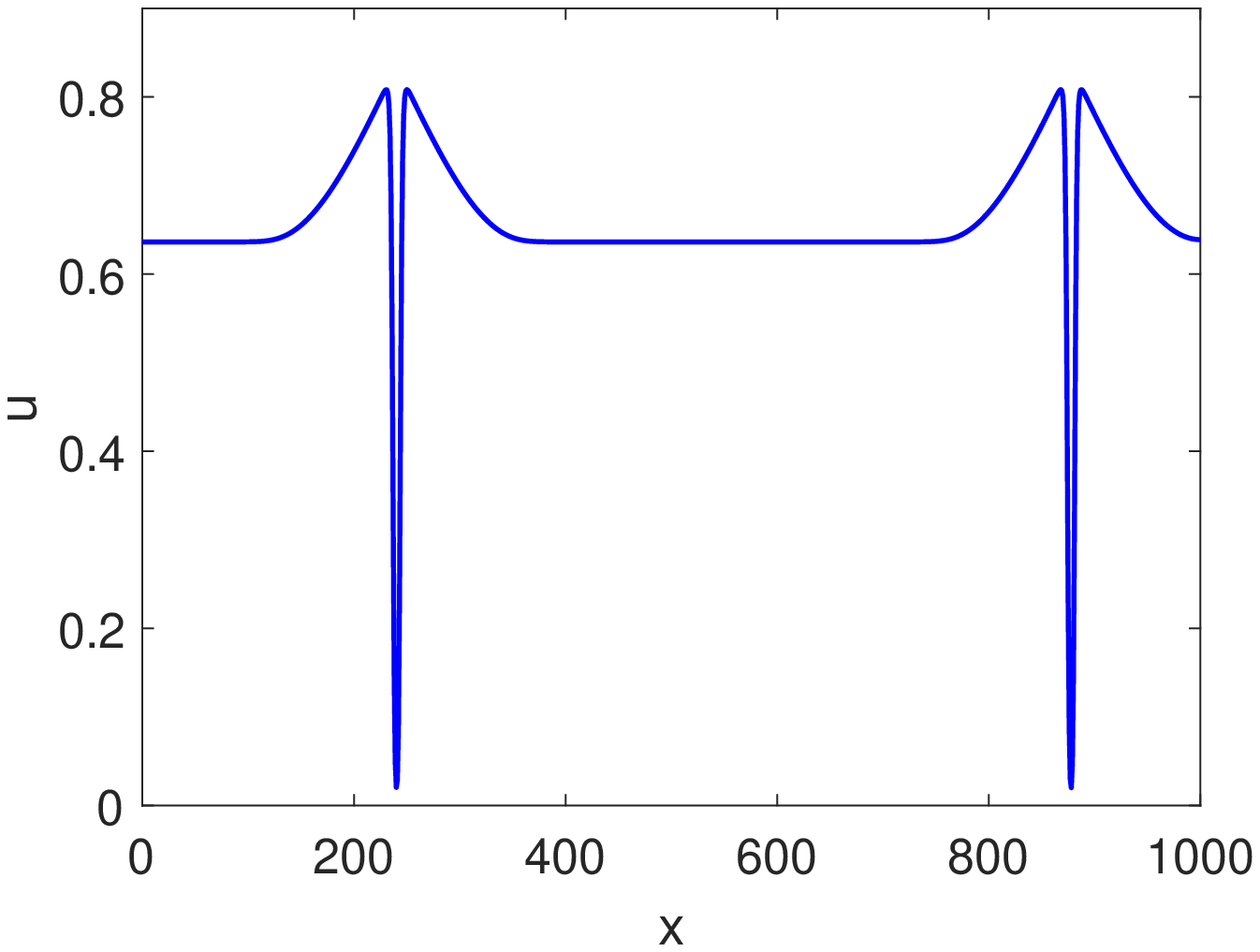}\\ 
\caption{}
\end{subfigure}
\begin{subfigure}[b]{.32\textwidth}
  \centering
  \centering
\includegraphics[scale=0.38]{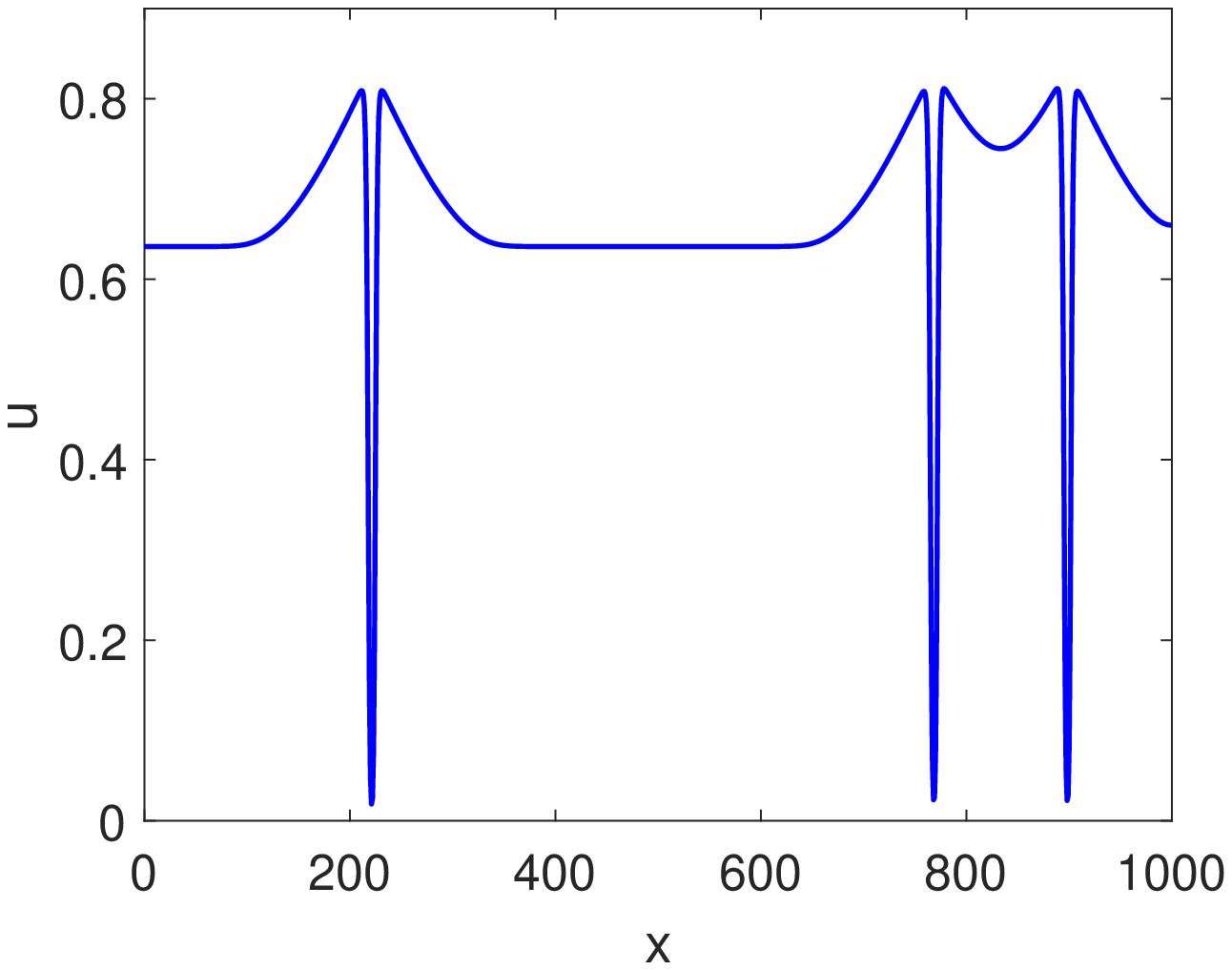}\\ 
\caption{}
\end{subfigure}
\caption{Localized patterns for the system \eqref{pde} with Holling type-II functional response: (a) $d=90$ (b)$d=100$ (c) $d=110$. Other parameter values are $\sigma=2,\gamma=1.11,\eta=0.1,\alpha=0.07$ and $\beta=0.$}
\label{Holling}
\end{figure}




\end{appendices}


\begin{thebibliography}{10}

\bibitem{LotkaUNDAMPEDOD}
AJ~Lotka.
\newblock Undamped oscillations derived from the law of mass action.
\newblock {\em Journal of the American Chemical Society}, 42(8):1595--1599,
  1920.

\bibitem{Volterra}
V~Volterra.
\newblock Variazioni e fluttuazioni del numero d’individui in specie animali
  conviventi.
\newblock {\em C. Ferrari}, 1926.

\bibitem{stephens1999allee}
PA~Stephens, WJ~Sutherland, and RP~Freckleton.
\newblock What is the allee effect?
\newblock {\em Oikos}, pages 185--190, 1999.

\bibitem{venturino2013spatiotemporal}
E~Venturino and S~Petrovskii.
\newblock Spatiotemporal behavior of a prey--predator system with a group
  defense for prey.
\newblock {\em Ecological Complexity}, 14:37--47, 2013.

\bibitem{wang2006prey}
W~Wang, Y~Takeuchi, Y~Saito, and S~Nakaoka.
\newblock Prey--predator system with parental care for predators.
\newblock {\em Journal of theoretical biology}, 241(3):451--458, 2006.

\bibitem{alves2017hunting}
MT~Alves and F~Hilker.
\newblock Hunting cooperation and allee effects in predators.
\newblock {\em Journal of theoretical biology}, 419:13--22, 2017.

\bibitem{kang2013dynamics}
Y~Kang and L~Wedekin.
\newblock Dynamics of a intraguild predation model with generalist or
  specialist predator.
\newblock {\em Journal of mathematical biology}, 67(5):1227--1259, 2013.

\bibitem{fish}
S~Kondo and R~Asai.
\newblock A reaction–diffusion wave on the skin of the marine angelfish
  pomacanthus.
\newblock {\em Nature}, 376:765--768, 1995.

\bibitem{prey}
A~Ducrots and M~Langlais.
\newblock A singular reaction–diffusion system modelling prey–predator
  interactions: Invasion and co-extinction waves.
\newblock {\em Journal of Differential Equations}, 253(2):502--532, 2012.

\bibitem{vegetation}
CA~Klausmeier.
\newblock Regular and irregular patterns in semiarid vegetation.
\newblock {\em Science}, 284:1826--1828, 1999.

\bibitem{cangelosi2015nonlinear}
RA~Cangelosi, DJ~Wollkind, BJ~Kealy-Dichone, and I~Chaiya.
\newblock Nonlinear stability analyses of turing patterns for a mussel-algae
  model.
\newblock {\em Journal of Mathematical Biology}, 70(6):1249--1294, 2015.

\bibitem{Turing}
AM~Turing.
\newblock The chemical basis of morphogenesis.
\newblock {\em Phil. Trans. Royal Society}, 237:37--72, 1952.

\bibitem{Segal}
LA~Segel and JL~Jackson.
\newblock Dissipative structure: an explanation and an ecological example.
\newblock {\em J. Theor. Biol.}, 37(3):545--59, 1972.

\bibitem{cross1993pattern}
MC~Cross and PC~Hohenberg.
\newblock Pattern formation outside of equilibrium.
\newblock {\em Reviews of modern physics}, 65(3):851, 1993.

\bibitem{turing1}
M~Banerjee and S~Petrovskii.
\newblock Self-organised spatial patterns and chaos in a ratio-dependent
  predator–prey system.
\newblock {\em Theoretical Ecology}, pages 37--53, 2011.

\bibitem{hillen1996turing}
T~Hillen.
\newblock A turing model with correlated random walk.
\newblock {\em Journal of Mathematical Biology}, 35(1):49--72, 1996.

\bibitem{dey2022analytical}
S~Dey, M~Banerjee, and S~Ghorai.
\newblock Analytical detection of stationary turing pattern in a predator-prey
  system with generalist predator.
\newblock {\em Mathematical Modelling of Natural Phenomena}, 17:33, 2022.

\bibitem{belyakov1997abundance}
LA~Belyakov, L~Yu Glebsky, and LM~Lerman.
\newblock Abundance of stable stationary localized solutions to the generalized
  1d swift-hohenberg equation.
\newblock {\em Computers \& Mathematics with Applications}, 34(2-4):253--266,
  1997.

\bibitem{champneys1998homoclinic}
AR~Champneys.
\newblock Homoclinic orbits in reversible systems and their applications in
  mechanics, fluids and optics.
\newblock {\em Physica D: Nonlinear Phenomena}, 112(1-2):158--186, 1998.

\bibitem{woods1999heteroclinic}
PD~Woods and AR~Champneys.
\newblock Heteroclinic tangles and homoclinic snaking in the unfolding of a
  degenerate reversible hamiltonian--hopf bifurcation.
\newblock {\em Physica D: Nonlinear Phenomena}, 129(3-4):147--170, 1999.

\bibitem{burke2007snakes}
J~Burke and E~Knobloch.
\newblock Snakes and ladders: localized states in the swift--hohenberg
  equation.
\newblock {\em Physics Letters A}, 360(6):681--688, 2007.

\bibitem{al2022spikes}
F~Al~Saadi, A~Champneys, C~Gai, and T~Kolokolnikov.
\newblock Spikes and localised patterns for a novel schnakenberg model in the
  semi-strong interaction regime.
\newblock {\em European Journal of Applied Mathematics}, 33(1):133--152, 2022.

\bibitem{petrovskii1999minimal}
SV~Petrovskii and H~Malchow.
\newblock A minimal model of pattern formation in a prey-predator system.
\newblock {\em Mathematical and Computer Modelling}, 29(8):49--63, 1999.

\bibitem{banerjee2017spatio}
M~Banerjee and V~Volpert.
\newblock Spatio-temporal pattern formation in rosenzweig-macarthur model:
  effect of nonlocal interactions.
\newblock {\em Ecological complexity}, 30:2--10, 2017.

\bibitem{mimura1978diffusive}
M~Mimura and JD~Murray.
\newblock On a diffusive prey-predator model which exhibits patchiness.
\newblock {\em Journal of Theoretical Biology}, 75(3):249--262, 1978.

\bibitem{wang2013dynamics}
X~Wang, Y~Cai, and H~Ma.
\newblock Dynamics of a diffusive predator-prey model with allee effect on
  predator.
\newblock {\em Discrete Dynamics in Nature and Society}, 2013, 2013.

\bibitem{Alleebook}
WC~Allee.
\newblock {\em Animal aggregations: A study in general sociology}.
\newblock University of Chicago Press, Chicago, 1931.

\bibitem{courchamp2008allee}
F~Courchamp, L~Berec, and J~Gascoigne.
\newblock {\em Allee effects in ecology and conservation}.
\newblock OUP Oxford, 2008.

\bibitem{boukal2002single}
DS~Boukal and L~Berec.
\newblock Single-species models of the allee effect: extinction boundaries, sex
  ratios and mate encounters.
\newblock {\em Journal of Theoretical Biology}, 218(3):375--394, 2002.

\bibitem{gascoigne2004allee}
JC~Gascoigne and RN~Lipcius.
\newblock Allee effects driven by predation.
\newblock {\em Journal of Applied Ecology}, 41(5):801--810, 2004.

\bibitem{kuussaari1998allee}
M~Kuussaari, I~Saccheri, M~Camara, and I~Hanski.
\newblock Allee effect and population dynamics in the glanville fritillary
  butterfly.
\newblock {\em Oikos}, pages 384--392, 1998.

\bibitem{rohlf1969effect}
FJ~Rohlf.
\newblock The effect of clumped distributions in sparse populations.
\newblock {\em Ecology}, 50(4):716--721, 1969.

\bibitem{molnar2008modelling}
PK~Molnar, AE~Derocher, MA~Lewis, and MK~Taylor.
\newblock Modelling the mating system of polar bears: a mechanistic approach to
  the allee effect.
\newblock {\em Proceedings of the Royal Society B: Biological Sciences},
  275(1631):217--226, 2008.

\bibitem{stoner2012negative}
AW~Stoner, MH~Davis, and CJ~Booker.
\newblock Negative consequences of allee effect are compounded by fishing
  pressure: comparison of queen conch reproduction in fishing grounds and a
  marine protected area.
\newblock {\em Bulletin of Marine Science}, 88(1):89--104, 2012.

\bibitem{kramer2009evidence}
AM~Kramer, B~Dennis, AM~Liebhold, and JM~Drake.
\newblock The evidence for allee effects.
\newblock {\em Population Ecology}, 51(3):341--354, 2009.

\bibitem{berec2007multiple}
L~Berec, E~Angulo, and F~Courchamp.
\newblock Multiple allee effects and population management.
\newblock {\em Trends in Ecology \& Evolution}, 22(4):185--191, 2007.

\bibitem{aguirre2009three}
P~Aguirre, E~Gonz{\'a}lez-Olivares, and E~S{\'a}ez.
\newblock Three limit cycles in a leslie--gower predator-prey model with
  additive allee effect.
\newblock {\em SIAM Journal on Applied Mathematics}, 69(5):1244--1262, 2009.

\bibitem{lewis1993allee}
MA~Lewis and P~Kareiva.
\newblock Allee dynamics and the spread of invading organisms.
\newblock {\em Theoretical Population Biology}, 43(2):141--158, 1993.

\bibitem{dennis1989allee}
B~Dennis.
\newblock Allee effects: population growth, critical density, and the chance of
  extinction.
\newblock {\em Natural Resource Modeling}, 3(4):481--538, 1989.

\bibitem{petrovskii2008consequences}
S~Petrovskii, R~Blackshaw, and B~Li.
\newblock Consequences of the allee effect and intraspecific competition on
  population persistence under adverse environmental conditions.
\newblock {\em Bulletin of Mathematical Biology}, 70(2):412--437, 2008.

\bibitem{jankovic2014time}
M~Jankovic and S~Petrovskii.
\newblock Are time delays always destabilizing? revisiting the role of time
  delays and the allee effect.
\newblock {\em Theoretical ecology}, 7(4):335--349, 2014.

\bibitem{mukherjee2021bifurcation}
N~Mukherjee and V~Volpert.
\newblock Bifurcation scenario of turing patterns in prey-predator model with
  nonlocal consumption in the prey dynamics.
\newblock {\em Communications in Nonlinear Science and Numerical Simulation},
  96:105677, 2021.

\bibitem{terry2015predator}
AJ~Terry.
\newblock Predator--prey models with component allee effect for predator
  reproduction.
\newblock {\em Journal of mathematical biology}, 71(6):1325--1352, 2015.

\bibitem{skalski2001functional}
GT~Skalski and JF~Gilliam.
\newblock Functional responses with predator interference: viable alternatives
  to the holling type ii model.
\newblock {\em Ecology}, 82(11):3083--3092, 2001.

\bibitem{pvribylova2015predator}
L~P{\v{r}}ibylov{\'a} and L~Berec.
\newblock Predator interference and stability of predator--prey dynamics.
\newblock {\em Journal of mathematical biology}, 71(2):301--323, 2015.

\bibitem{beddington1975mutual}
J~R Beddington.
\newblock Mutual interference between parasites or predators and its effect on
  searching efficiency.
\newblock {\em The Journal of Animal Ecology}, pages 331--340, 1975.

\bibitem{alonso2002mutual}
D~Alonso, F~Bartumeus, and J~Catalan.
\newblock Mutual interference between predators can give rise to turing spatial
  patterns.
\newblock {\em Ecology}, 83(1):28--34, 2002.

\bibitem{skov2013migration}
C~Skov, BB~Chapman, H~Baktoft, J~Brodersen, C~Br{\"o}nmark, LA~Hansson,
  K~Hulth{\'e}n, and PA~Nilsson.
\newblock Migration confers survival benefits against avian predators for
  partially migratory freshwater fish.
\newblock {\em Biology letters}, 9(2):20121178, 2013.

\bibitem{fryxell2012individual}
J~Fryxell and P~Lundberg.
\newblock {\em Individual behavior and community dynamics}, volume~20.
\newblock Springer Science \& Business Media, 2012.

\bibitem{banerjee2016prey}
M~Banerjee and V~Volpert.
\newblock Prey-predator model with a nonlocal consumption of prey.
\newblock {\em Chaos: An Interdisciplinary Journal of Nonlinear Science},
  26(8):083120, 2016.

\bibitem{dunbar1983travelling}
SR~Dunbar.
\newblock Travelling wave solutions of diffusive lotka-volterra equations.
\newblock {\em Journal of Mathematical Biology}, 17(1):11--32, 1983.

\bibitem{huang2016geometric}
W~Huang.
\newblock A geometric approach in the study of traveling waves for some classes
  of non-monotone reaction--diffusion systems.
\newblock {\em Journal of Differential Equations}, 260(3):2190--2224, 2016.

\bibitem{ai2017traveling}
S~Ai, Y~Du, and R~Peng.
\newblock Traveling waves for a generalized holling--tanner predator--prey
  model.
\newblock {\em Journal of Differential Equations}, 263(11):7782--7814, 2017.

\bibitem{banerjee2020prey}
M~Banerjee, N~Mukherjee, and V~Volpert.
\newblock Prey-predator model with nonlocal and global consumption in the prey
  dynamics.
\newblock {\em Discrete \& Continuous Dynamical Systems-S}, 13(8):2109, 2020.

\bibitem{Perko}
L~Perko.
\newblock {\em Differential Equations and Dynamical Systems}.
\newblock Springer-Verlag, New York, 2000.

\bibitem{degeneratetranscritical}
P~Liu, J~Shi, and Y~Wang.
\newblock Bifurcation from a degenerate simple eigenvalue.
\newblock {\em Journal of Functional Analysis}, 264(10):2269--2299, 2013.

\bibitem{pao2012nonlinear}
CV~Pao.
\newblock {\em Nonlinear parabolic and elliptic equations}.
\newblock Springer Science \& Business Media, 2012.

\bibitem{travebook}
J~Smoller.
\newblock {\em Shock waves and reaction—diffusion equations}, volume 258.
\newblock Springer Science \& Business Media, 2012.

\bibitem{hale2010asymptotic}
JK~Hale.
\newblock {\em Asymptotic behavior of dissipative systems}.
\newblock American Mathematical Soc., 2010.

\bibitem{lou1996diffusion}
Y~Lou and WM~Ni.
\newblock Diffusion, self-diffusion and cross-diffusion.
\newblock {\em Journal of Differential Equations}, 131(1):79--131, 1996.

\bibitem{index}
PY~Pang and M~Wang.
\newblock Strategy and stationary pattern in a three-species predator--prey
  model.
\newblock {\em Journal of Differential Equations}, 200(2):245--273, 2004.

\bibitem{al2021unified}
F~Al~Saadi and A~Champneys.
\newblock Unified framework for localized patterns in reaction--diffusion
  systems; the gray--scott and gierer--meinhardt cases.
\newblock {\em Philosophical Transactions of the Royal Society A},
  379(2213):20200277, 2021.

\bibitem{uecker2014pde2path}
H~Uecker, D~Wetzel, and JD~Rademacher.
\newblock pde2path-a matlab package for continuation and bifurcation in 2d
  elliptic systems.
\newblock {\em Numerical Mathematics: Theory, Methods and Applications},
  7(1):58--106, 2014.

\bibitem{ermentrout2003simulating}
B~Ermentrout and A~Mahajan.
\newblock Simulating, analyzing, and animating dynamical systems: a guide to
  xppaut for researchers and students.
\newblock {\em Appl. Mech. Rev.}, 56(4):B53--B53, 2003.

\bibitem{hayase2000self}
Y~Hayase and T~Ohta.
\newblock Self-replicating pulses and sierpinski gaskets in excitable media.
\newblock {\em Physical Review E}, 62(5):5998, 2000.

\bibitem{kazantsev2003spiking}
VB~Kazantsev, VI~Nekorkin, S~Binczak, and JM~Bilbault.
\newblock Spiking patterns emerging from wave instabilities in a
  one-dimensional neural lattice.
\newblock {\em Physical Review E}, 68(1):017201, 2003.

\bibitem{morozov2006spatiotemporal}
A~Morozov, S~Petrovskii, and BL~Li.
\newblock Spatiotemporal complexity of patchy invasion in a predator-prey
  system with the allee effect.
\newblock {\em Journal of theoretical Biology}, 238(1):18--35, 2006.

\bibitem{holmes1988food}
RT~Holmes and JC~Schultz.
\newblock Food availability for forest birds: effects of prey distribution and
  abundance on bird foraging.
\newblock {\em Canadian journal of Zoology}, 66(3):720--728, 1988.

\bibitem{fryxell2014wildlife}
JM~Fryxell, ARE Sinclair, and G~Caughley.
\newblock {\em Wildlife ecology, conservation, and management}.
\newblock John Wiley \& Sons, 2014.

\bibitem{brodie2002evolutionary}
Edmund~D Brodie~J, BJ~Ridenhour, and ED~III~Brodie.
\newblock The evolutionary response of predators to dangerous prey: hotspots
  and coldspots in the geographic mosaic of coevolution between garter snakes
  and newts.
\newblock {\em Evolution}, 56(10):2067--2082, 2002.

\bibitem{lehman1993food}
JT~Lehman and CE~C{\'a}ceres.
\newblock Food-web responses to species invasion by a predatory invertebrate:
  Bythotrephes in lake michigan.
\newblock {\em Limnology and Oceanography}, 38(4):879--891, 1993.

\bibitem{jeltsch1992oscillating}
F~Jeltsch, C~Wissel, S~Eber, and R~Brandl.
\newblock Oscillating dispersal patterns of tephritid fly populations.
\newblock {\em Ecological modelling}, 60(1):63--75, 1992.

\bibitem{morozov2020long}
A~Morozov, K~Abbott, K~Cuddington, T~Francis, G~Gellner, A~Hastings, YC~Lai,
  S~Petrovskii, K~Scranton, and ML~Zeeman.
\newblock Long transients in ecology: Theory and applications.
\newblock {\em Physics of Life Reviews}, 32:1--40, 2020.

\end{thebibliography}

\end{document}